\documentclass[12pt,reqno]{amsart}

\usepackage{amsmath,amssymb,amsthm}

\usepackage[
  colorlinks=true,
  bookmarks=true,
  bookmarksnumbered=true,
  bookmarkstype=toc,
  linkcolor=blue,
  urlcolor=blue,
  citecolor=blue
]{hyperref}

\theoremstyle{plain}
\newtheorem{thm}{Theorem}[section]
\newtheorem{lem}[thm]{Lemma}
\newtheorem{prop}[thm]{Proposition}

\theoremstyle{definition}
\newtheorem{dfn}[thm]{Definition}

\theoremstyle{remark}
\newtheorem{rem}{Remark}[section]

\numberwithin{equation}{section}

\DeclareMathOperator{\dist}{dist}
\newcommand{\R}{\mathbb{R}}
\newcommand{\M}{\mathcal M}
\renewcommand{\phi}{\varphi}

\begin{document}

\title[Logistic equations with sanctuary regions and nonlocal diffusion]{
Fully Nonlinear Logistic Equations with Sanctuary Regions and Nonlocal Diffusion
}

\author[G. Galise]{Giulio Galise
}
\address[G. Galise]{
Dipartimento di Matematica Guido Castelnuovo, Sapienza Universit\`a di Roma, Piazzale Aldo Moro 5, Rome, Italy
}
\email{galise@mat.uniroma1.it}

\author[S. Kitano]{Shuhei Kitano
}
\address[S. Kitano]{
Research Institute for Science and Engineering, Waseda University, 3-4-1 Okubo, Shinjuku, Tokyo, 169-8555, Japan
}
\email{sk.koryo@moegi.waseda.jp}

\begin{abstract}
We study positive solutions of $Iu+\mu u=k(x)u^p$ in a bounded domain,
where $p>1$, the nonnegative coefficient $k$ may vanish on a sanctuary region,
and $I$ is a fully nonlinear nonlocal operator uniformly elliptic in the
Caffarelli--Silvestre sense.  We characterize the sharp existence interval in
terms of positive principal half-eigenvalues and prove uniqueness.  At the
lower endpoint, the solutions vanish uniformly and their normalized profiles
converge to the principal eigenfunction.  If $k$ vanishes on a sanctuary
$\Omega_0$, then, as $\mu\uparrow\lambda(\Omega_0)$, the solutions blow up
locally uniformly in $\Omega$.  Writing
$\phi_0=\phi(\cdot,\Omega_0)$ for the normalized positive principal
eigenfunction in $\Omega_0$, we prove
\[
\frac{u_\mu}{\|u_\mu\|_{L^\infty(\Omega)}}\to\phi_0,
\qquad
\frac{u_\mu}{\|u_\mu\|_{L^\infty(\Omega)}^{1/p}}
\to\left(\frac{I\phi_0}{k}\right)^{1/p},
\]
uniformly in $\R^n$ and locally uniformly in
$\Omega\setminus\overline{\Omega_0}$, respectively.  Thus the upper-endpoint
blow-up occurs on two distinct scales.
\end{abstract}

\subjclass[2020]{Primary 35R09; Secondary 35J60, 35P30, 35B40.}
\keywords{Fully nonlinear nonlocal equations, logistic equations, sanctuary regions, principal half-eigenvalues, viscosity solutions, endpoint asymptotics.}

\maketitle

\section{Introduction}
We study positive solutions of the stationary nonlocal logistic problem
\begin{equation}\label{e1}
\left\{
\begin{aligned}
    Iu+\mu u&=k(x)u^p&&\text{in }\Omega,\\
    u&=0&&\text{in }\R^n\setminus\Omega,
\end{aligned}
\right.
\end{equation}
where $\Omega\subset\R^n$ is a bounded $C^2$ domain, $\mu\in\R$ is a
parameter, $p>1$, and $k\in C(\overline\Omega)$ is nonnegative and not
identically zero.  The operator $I$ is fully nonlinear and nonlocal, and is
uniformly elliptic in the sense of Caffarelli--Silvestre.  Precise assumptions
are given in Section~\ref{s2}.  Throughout the paper, a solution of
\eqref{e1} means a bounded continuous viscosity solution in the sense of
Definition~\ref{def-viscosity}.  The model examples are the negative fractional
Laplacian
\[
-(-\Delta)^s u(x)=\frac{c_{n,s}}{2}\,\operatorname{P.V.}\!\int_{\R^n}
\frac{\delta(u,x,y)}{|y|^{n+2s}}\,dy,
\qquad
\delta(u,x,y):=u(x+y)+u(x-y)-2u(x),
\]
where $c_{n,s}>0$ is the usual normalization constant.  More generally, we
consider the Isaacs operator
\begin{equation}\label{I}
Iu(x):=\inf_{a\in A}\sup_{b\in B}
\int_{\R^n}\delta(u,x,y)K_{a,b}(y)\,dy,
\end{equation}
where $A$ and $B$ are nonempty index sets and the symmetric kernels
$K_{a,b}$ are comparable with
$|y|^{-n-2s}$.

The main feature of the problem is that the absorption coefficient $k$ is
allowed to vanish on a sanctuary region.  We distinguish the non-sanctuary
case
\begin{equation*}\tag{K1}\label{K1}
    k(x)>0\qquad\text{for every }x\in\Omega,
\end{equation*}
from the sanctuary case: there exists a bounded $C^2$ domain
$\Omega_0\subsetneq\Omega$, not necessarily compactly contained in $\Omega$,
such that
\begin{equation*}\tag{K2}\label{K2}
\left\{
\begin{aligned}
    k&=0&&\text{on }\overline{\Omega_0}\cap\Omega,\\
    k&>0&&\text{in }\Omega\setminus\overline{\Omega_0}.
\end{aligned}
\right.
\end{equation*}
For a bounded domain $D$, we denote by
$\lambda(D)=\lambda_1^+(I,D)$ the positive principal half-eigenvalue of $I$
in $D$, and by $\phi(\cdot,D)$ the corresponding positive eigenfunction,
normalized by $\|\phi(\cdot,D)\|_{L^\infty(\R^n)}=1$.  Their precise
definitions and basic properties are recalled in Section~\ref{s2}.  Under
\eqref{K2}, one has
$\lambda(\Omega)<\lambda(\Omega_0)$; see
Proposition~\ref{lem-strict-domain-monotonicity} below.

Our first result gives the sharp existence interval.
\begin{thm}\label{thm11}
    Let $I$ satisfy \textup{(H1)}.  If $k$ satisfies \eqref{K1}, then \eqref{e1} admits a
    unique positive solution if and only if
    $\lambda(\Omega)<\mu$.  If $k$ satisfies \eqref{K2}, then \eqref{e1}
    admits a unique positive solution if and only if
    \[
       \lambda(\Omega)<\mu<\lambda(\Omega_0).
    \]
\end{thm}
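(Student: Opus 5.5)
The proof splits into necessity, existence and uniqueness. We use the properties of $\lambda(D)$ and $\phi(\cdot,D)$ recalled in Section~\ref{s2}: strict domain monotonicity (Proposition~\ref{lem-strict-domain-monotonicity}), a characterization of $\lambda(D)$ as the supremum of $\lambda$ for which a positive supersolution of $Iv+\lambda v\le 0$ exists, and the maximum principle / comparison principle below the principal half-eigenvalue.

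\emph{Necessity.} Let $u>0$ solve \eqref{e1}. Then $Iu+\mu u=k u^p\ge0$ and $k\not\equiv0$, so $u$ is a positive supersolution of $Iu+(\mu-\|k\|_\infty\|u\|_\infty^{p-1})u\le0$. This alone is not enough. Instead, compare $u$ with $t\phi(\cdot,\Omega)$. If $\mu\le\lambda(\Omega)$, then $u$ is a positive strict subsolution, $Iu+\mu u\ge0$, with strict inequality where $k>0$. Following the Berestycki--Nirenberg--Varadhan style argument, take the largest $t$ with $t u\le \phi(\cdot,\Omega)$ and use the strong maximum principle for the nonlocal Isaacs operator. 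This yields $u=c\phi$, which forces $ku^p\equiv0$, a contradiction.

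Under \eqref{K2}, if $\mu\ge\lambda(\Omega_0)$, note that $u>0$ in $\overline{\Omega_0}$ region and $Iu+\mu u=0$ in $\Omega_0$, while $u>0=$ exterior data on $\R^n\setminus\Omega_0$. So $u$ is a positive supersolution in $\Omega_0$: $Iu+\lambda(\Omega_0)u\le Iu+\mu u=0$, with strict positivity on $\Omega\setminus\Omega_0$ outside. Comparing with $t\phi_0$, again at the maximal $t$ with $t\phi_0\le u$, the touching cannot happen since the nonlocal tails differ: $u-t\phi_0>0$ on $\Omega\setminus\overline{\Omega_0}$, a set of positive measure. The nonlocal strong maximum principle therefore gives a strict inequality, a contradiction.

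\emph{Existence.} We use sub/supersolutions plus Perron's method, or monotone iteration with the comparison principle for $I$. For the subsolution, take $\varepsilon\phi(\cdot,\Omega)$. Since $I$ is positively $1$-homogeneous, $I(\varepsilon\phi)+\mu\varepsilon\phi=(\mu-\lambda(\Omega))\varepsilon\phi\ge k\varepsilon^p\phi^p$ for small $\varepsilon$. For the supersolution under \eqref{K1}, we need a bounded function. Since $k$ may vanish on $\partial\Omega$, a constant $M$ is not admissible directly. Instead take $M$ large on a compact set where $k\ge k_0>0$, and near $\partial\Omega$ glue with a multiple of a positive eigenfunction on a slightly larger domain $\Omega'$ with $\lambda(\Omega')>\mu$ near the boundary strip. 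This is a localization argument and it is delicate. Under \eqref{K2}, choose $\Omega_0'\supset\overline{\Omega_0}$ slightly larger with $\lambda(\Omega_0')>\mu$ (by continuity of $\lambda$ in the domain). Then take $\bar u=\max$-type combinations of $M\phi(\cdot,\Omega_0')$ near $\Omega_0$ and a large constant where $k$ is bounded below. Constructing this supersolution, in particular verifying the viscosity inequality for the glued function with a nonlocal operator (where tails interact), is the main obstacle. We would try $\bar u = M\psi$ with $\psi$ the eigenfunction of a domain $D\supset\overline\Omega_0\cup$(boundary strip), extended by a large constant cut-off, and exploit $M^p\gg M$.

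\emph{Uniqueness.} For two positive solutions $u,v$, we use the $u^p/u$ monotonicity trick adapted to viscosity solutions. Let $\tau=\inf\{t\ge1: tu\ge v\}$ and suppose $\tau>1$. Since $p>1$, $\tau u$ is a strict supersolution where $k>0$: $I(\tau u)+\mu\tau u=k\tau u^p\le k(\tau u)^p$. At the touching point with $v$, the nonlocal strong maximum principle gives a contradiction in $\Omega\setminus\overline{\Omega_0}$. In $\Omega_0$ touching propagates through the tail to $\Omega\setminus\Omega_0$, so the contradiction persists. Boundary behaviour is handled by the Hopf lemma for $I$ on $C^2$ domains, which ensures $\tau<\infty$.
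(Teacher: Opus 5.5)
\textbf{Overall.} Your necessity and uniqueness arguments follow the paper's sweeping arguments in substance (Lemma~\ref{lem-proportionality}, Proposition~\ref{prop}, Lemma~\ref{lem4}), and your subsolution is Lemma~\ref{lem-positive-subsolution}. The genuine gap is existence: you never produce a positive supersolution vanishing outside $\Omega$.

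\textbf{Caveat on the sweeping.} As you wrote it, the extremal $t$ need not produce an interior contact point. The paper forces one by optimizing the ratio over a compact $K\Subset\Omega$. It then controls $\Omega\setminus K$ with the small-measure minimum principle (Lemma~\ref{lem-small-measure}), which replaces the Hopf lemma and $d^s$ boundary bounds you would otherwise need.

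\textbf{Why your supersolution sketches fail.} Under \eqref{K1}, any $\Omega'\supset\Omega$ has $\lambda(\Omega')<\lambda(\Omega)<\mu$ by Proposition~\ref{lem-strict-domain-monotonicity}, so it cannot satisfy $\lambda(\Omega')>\mu$. An eigenfunction of a thin neighbourhood of the boundary strip vanishes on the inner edge of that neighbourhood. Its minimum with $M$ therefore vanishes in the interior and cannot dominate $\varepsilon\phi(\cdot,\Omega)$. Under \eqref{K2}, a max-type combination of $M\phi(\cdot,\Omega_0')$ with a large constant is not a supersolution in general. The maximum lies above the active branch, so ellipticity transfers the inequality in the wrong direction. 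A piecewise definition instead produces an upward jump whose tail makes $I$ large exactly where $k$ is small or zero. Finally, an eigenfunction of $D\supset\overline{\Omega_0}\cup(\text{strip})$ would need $\lambda(D)>\mu$ for every $\mu<\lambda(\Omega_0)$. Lemma~\ref{lem-domain-continuity} does not provide this, and the construction still leaves $\Omega\setminus D$ uncovered.

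\textbf{The missing idea.} Decouple the sanctuary from the boundary, and glue only by minima.

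First, as in Lemma~\ref{lem32}, take $\overline{\Omega_0}\Subset\Omega_1\Subset\Omega_2$ with $\mu<\lambda(\Omega_2)$. Under \eqref{K1}, $\Omega_1$ is a small ball and $\mu<\lambda(\Omega_1)$ follows from Lemma~\ref{lem-small-measure}. Set $\psi=\min\{\phi(\cdot,\Omega_2),\eta_1\}+\theta$ with a large cut-off $\eta_1\in C_c^\infty(\Omega_2)$.
\begin{itemize}
\item The cut-off keeps the defect $I\psi+\mu\psi\le C_4$ bounded on all of $\R^n$.
\item Constant invariance of $I$ and the strict gap $(\lambda(\Omega_2)-\mu)\min_{\overline{\Omega_1}}\phi(\cdot,\Omega_2)$ absorb $\mu\theta$, giving $I\psi+\mu\psi\le0$ in $\Omega_1$.
\item Since $\psi\ge\theta>0$ everywhere, $M\psi$ with $M^{p-1}=C_4/(\theta^pm_\delta)$ is a supersolution wherever $k\ge m_\delta$. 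This is where $M^p\gg M$ enters, with no max-gluing.
\end{itemize}

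Second, handle the boundary with the barrier $\xi=d_\Omega^\gamma$ near $\partial\Omega$, not with an eigenfunction. It satisfies $\M^+\xi\le-c_2d_\Omega^{\gamma-2s}$, which dominates $\mu\xi$ in a thin layer. Then $\overline w=\min\{M\psi,L\xi\}$ with $L=M\theta\delta^{-\gamma}$ equals $L\xi$ in $\Omega_\delta$ and $M\psi$ outside $\Omega_{\sqrt\delta}$. It is a supersolution as a minimum of supersolutions (Lemma~\ref{lem-positive-supersolution}).

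Without such a supersolution, neither Perron's method nor monotone iteration has anything to run between. So your proposal does not establish the existence half of the theorem.
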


We next describe the behavior at the endpoints of this interval.  At the
lower endpoint, the solution vanishes and its normalized profile converges to
the principal eigenfunction in $\Omega$.
\begin{thm}\label{thm-lower-asymptotics}
    Let $I$ satisfy \textup{(H1)}, and suppose that either \eqref{K1} or
    \eqref{K2} holds.  As $\mu\downarrow\lambda(\Omega)$, the unique
    positive solution $u_\mu$ of \eqref{e1}
    converges to $0$ uniformly in $\R^n$.  Moreover,
    \[
       \frac{u_\mu}{\|u_\mu\|_{L^\infty(\Omega)}}
       \longrightarrow \phi(\cdot,\Omega)
    \]
    uniformly in $\R^n$.
\end{thm}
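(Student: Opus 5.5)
We argue by compactness and uniqueness. Throughout, let $\mu_j\downarrow\lambda(\Omega)$, set $u_j=u_{\mu_j}$, $M_j=\|u_j\|_{L^\infty(\Omega)}$ and $v_j=u_j/M_j$. Since $I$ is positively $1$-homogeneous (it is an inf-sup of linear operators), $v_j$ solves
\[
Iv_j+\mu_j v_j=k(x)M_j^{p-1}v_j^p \quad\text{in }\Omega,\qquad v_j=0 \text{ in }\R^n\setminus\Omega,\qquad \|v_j\|_\infty=1.
\]

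\textbf{Step 1: a priori bounds.} First we need $\sup_j M_j<\infty$. We compare with a fixed supersolution. In case \eqref{K1}, take the solution $u_{\bar\mu}$ for a fixed $\bar\mu>\lambda(\Omega)$, using that $u_\mu$ is nondecreasing in $\mu$; this monotonicity follows because $u_{\mu}$ is a subsolution for the $\bar\mu$-problem, combined with the comparison/uniqueness argument behind Theorem~\ref{thm11}. In case \eqref{K2}, the same argument works with $\bar\mu\in(\lambda(\Omega),\lambda(\Omega_0))$. Thus $0<u_j\le u_{\bar\mu}$, and $M_j$ is bounded.

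\textbf{Step 2: equicontinuity and limit of the profiles.} Since $|Iv_j|\le C$ in $\Omega$, uniform H\"older estimates up to the boundary for uniformly elliptic nonlocal equations in $C^2$ domains (boundary barriers of the type $d^s$) make $\{v_j\}$ equicontinuous on $\R^n$, with $v_j=0$ outside $\Omega$. Passing to a subsequence, $v_j\to v$ uniformly in $\R^n$, with $\|v\|_\infty=1$ and $v\ge 0$. We also have $M_j\to M\ge0$ along a further subsequence, and $u_j\to Mv$. By stability of viscosity solutions,
\[
Iv+\lambda(\Omega)v=kM^{p-1}v^p .
\]

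\textbf{Step 3: $M=0$, the main obstacle.} Suppose $M>0$. Then $w=Mv$ is a nonnegative, nontrivial solution of \eqref{e1} with $\mu=\lambda(\Omega)$, and it is positive in $\Omega$ by the strong maximum principle. This contradicts the nonexistence part of Theorem~\ref{thm11}. A direct way to see the contradiction is that $Iw+\lambda(\Omega)w=kw^p\ge0$, $\not\equiv0$, with $w>0$, contradicting the characterization of $\lambda(\Omega)$ as the principal half-eigenvalue (a positive strict supersolution-type obstruction, as used earlier). Hence $M=0$, so every subsequence has $M_j\to0$, and therefore $u_\mu\to0$ uniformly in $\R^n$.

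\textbf{Step 4: identification of the profile.} With $M=0$, the limit $v$ solves $Iv+\lambda(\Omega)v=0$ in $\Omega$, vanishes outside $\Omega$, satisfies $v\ge0$ and $\|v\|_\infty=1$, and $v>0$ in $\Omega$ by the strong maximum principle. By simplicity of the principal half-eigenvalue $\lambda_1^+$ among positive eigenfunctions (recalled in Section~\ref{s2}), $v=t\phi(\cdot,\Omega)$, and the normalization forces $t=1$. Since the limit is independent of the subsequence, the whole family converges uniformly in $\R^n$.

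The delicate points are the uniform global equicontinuity in Step 2 and the monotonicity/upper bound in Step 1. If monotonicity in $\mu$ is not available, I would instead bound $M_j$ at a maximum point $x_j$ of $u_j$ by noting that $Iu_j(x_j)\le0$. This gives $k(x_j)M_j^{p-1}\le\mu_j$, but it fails where $k$ is small, so the supersolution comparison route is preferable.
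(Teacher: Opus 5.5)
Your proposal is correct and follows essentially the paper's proof. It uses monotonicity in $\mu$ via Lemma~\ref{lem4} for the uniform bound, the global H\"older estimate of Lemma~\ref{lem3} for compactness, stability of viscosity solutions, and identification of the normalized limit through Propositions~\ref{prop21} and~\ref{prop} together with the normalization. The only minor difference is how you show $M=0$. You pass to the limit jointly in $(M_j,v_j)$ and, after the strong maximum principle, invoke the nonexistence part of Theorem~\ref{thm11} at $\mu=\lambda(\Omega)$. The paper instead takes the unnormalized limit $u_0$, writes $u_0=\alpha\phi(\cdot,\Omega)$ by Proposition~\ref{prop}, and deduces $ku_0^p=0$, hence $\alpha=0$.
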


At the upper endpoint, a two-scale nonlocal profile emerges.
\begin{thm}\label{thm-upper-asymptotics}
    Let $I$ satisfy \textup{(H1)}, suppose that \eqref{K2} holds, and set
    $\phi_0:=\phi(\cdot,\Omega_0)$.  As
    $\mu\uparrow\lambda(\Omega_0)$, the unique positive solution $u_\mu$
    of \eqref{e1}
    diverges to $+\infty$ locally uniformly in $\Omega$.  Moreover,
    \[
       \frac{u_\mu}{\|u_\mu\|_{L^\infty(\Omega)}}
       \longrightarrow \phi_0
    \]
    uniformly in $\R^n$, while
    \[
       \frac{u_\mu(x)}{\|u_\mu\|_{L^\infty(\Omega)}^{1/p}}
       \longrightarrow
       \left(\frac{I\phi_0(x)}{k(x)}\right)^{1/p}
    \]
    locally uniformly in $\Omega\setminus\overline{\Omega_0}$.
\end{thm}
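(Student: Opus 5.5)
We first prove that $M_\mu:=\|u_\mu\|_{L^\infty(\Omega)}\to\infty$, then identify the normalized limit, and finally derive the second-scale profile outside $\overline{\Omega_0}$. The uniqueness and comparison in Theorem~\ref{thm11} show that $\mu\mapsto u_\mu$ is nondecreasing: $u_\mu$ is a subsolution for $\mu'>\mu$, and a large multiple of a positive supersolution can serve as the upper barrier.

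\textbf{Step 1: $M_\mu\to\infty$.} Suppose instead that $M_\mu$ stays bounded along a sequence $\mu_j\uparrow\lambda(\Omega_0)$. The uniform H\"older and boundary estimates for (H1) operators from Section~\ref{s2} give compactness in $C(\R^n)$. Stability of viscosity solutions then yields a solution $u_*$ of \eqref{e1} with $\mu=\lambda(\Omega_0)$. By monotonicity $u_*\ge u_{\mu_1}>0$, so $u_*$ is a positive solution at $\mu=\lambda(\Omega_0)$, which Theorem~\ref{thm11} rules out.

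\textbf{Step 2: normalized limit.} Set $v_\mu:=u_\mu/M_\mu$. It satisfies $Iv_\mu+\mu v_\mu=k M_\mu^{p-1}v_\mu^p$ and $\|v_\mu\|_\infty=1$. This equation gives $Iv_\mu\ge -\mu v_\mu$, a one-sided bound that is uniform in $\mu$. In $\Omega_0$ the right side vanishes, so there we have the uniform equation $Iv_\mu+\mu v_\mu=0$.

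Outside $\Omega_0$ we need $v_\mu\to0$. On any compact set $K\subset\Omega\setminus\overline{\Omega_0}$ we have $k\ge c_K>0$. A localized Keller--Osserman type barrier handles this: a large-solution supersolution $w$ in a neighbourhood of $K$ with $Iw+\mu w\le c_K w^p$ there and $w\ge$ any bounded exterior data, which exists since $p>1$. This gives $u_\mu\le C_K\, M_\mu^{\theta}$ with $\theta<1$. More simply, $v_\mu\le C (M_\mu^{p-1})^{-1/p}$ locally, after using the scaling $w=\varepsilon^{-1/(p-1)}W$. Hence $v_\mu\to0$ locally uniformly in $\Omega\setminus\overline{\Omega_0}$.

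Combining Step 2 with the uniform estimates up to $\partial\Omega$, any subsequential limit $v$ is continuous, vanishes outside $\overline{\Omega_0}$, and satisfies $Iv+\lambda(\Omega_0)v=0$ in $\Omega_0$. The maximum point of $v_\mu$ cannot escape to $\Omega\setminus\overline{\Omega_0}$ or to $\partial\Omega$ by the decay just shown, so $\|v\|=1$ and $v\ge0$. The simplicity of the principal half-eigenvalue in Section~\ref{s2} then gives $v=\phi_0$. Because the limit is unique, the whole family converges.

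The main obstacle is equicontinuity near $\partial\Omega_0\cap\Omega$, where $k$ degenerates. I would obtain it from two ingredients:
\begin{itemize}
\item[(i)] uniform H\"older estimates for $v_\mu$ using only $|Iv_\mu|\le C$ on the set where $v_\mu$ is controlled;
\item[(ii)] the one-sided bound $Iv_\mu\ge-\mu$ together with the barrier decay, which keeps the boundary layer thin.
\end{itemize}

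\textbf{Step 3: second scale.} Set $w_\mu:=u_\mu/M_\mu^{1/p}$. On $K\subset\Omega\setminus\overline{\Omega_0}$ the equation gives
\[
k\,w_\mu^p=\frac{Iu_\mu+\mu u_\mu}{M_\mu}=Iv_\mu+\mu v_\mu .
\]
Since $v_\mu\to0$ uniformly on $K$, the term $\mu v_\mu$ vanishes in the limit. The remaining task is to show $Iv_\mu\to I\phi_0$ locally uniformly in $\Omega\setminus\overline{\Omega_0}$. Near $x\in K$, $v_\mu$ is small: by the Step 2 barrier it is of order $M_\mu^{-(p-1)/p}$, and it is of the same size in a $C^{1,1}$ sense there by interior regularity for the equation with bounded right side. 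The nonlocal integral over far regions then converges, by uniform convergence $v_\mu\to\phi_0$ and the kernel bounds, to
\[
I\phi_0(x)=\inf_a\sup_b\int \phi_0(x+y)+\phi_0(x-y)\,K_{a,b}(y)\,dy>0 .
\]
This gives the stated limit $(I\phi_0/k)^{1/p}$. The delicate point is making the near-field contribution vanish uniformly, and here I expect interior $C^{2s+\alpha}$ estimates on $w_\mu$ to be needed.
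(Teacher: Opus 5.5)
Your Step 1 matches the paper's, and so does your overall architecture. Your decay bound $v_\mu\le CM_\mu^{-(p-1)/p}$ on compacts of $\Omega\setminus\overline{\Omega_0}$ is also the right one. But the two points you flag as obstacles are genuine gaps, and the tools you propose for them are not available.

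In Step 2 there is no equicontinuity near $\partial\Omega_0\cap\Omega$, nor near $\partial\Omega$ (where $k$ may vanish). Your control of the forcing $kM_\mu^{p-1}v_\mu^p$ degenerates there, so neither Lemma~\ref{lem3} nor interior H\"older estimates apply uniformly, and item (i) fails exactly where it is needed. Hence you have not excluded that the maximum of $v_\mu$ escapes to $\partial\Omega_0\cup\partial\Omega$, and $\alpha=1$ (even $\alpha>0$) is unproved.

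The missing idea is that only an \emph{upper} bound is needed there, and it comes from an $L^1$--$L^\infty$ estimate rather than from continuity. Since $v_\mu\ge0$ vanishes outside $\Omega$ and satisfies $\M^+v_\mu\ge-\mu v_\mu$, the local maximum principle (Lemma~\ref{lem28}) applies on finitely many balls $B_{2\eta}(x_i)$ with $x_i\in\overline\Omega\setminus\Omega_0$. It bounds $\sup_{B_{2\eta}(x_i)}v_\mu$ by $C_3(\|v_\mu(x_i+\eta\,\cdot)\|_{L^1_{\omega_s}(\R^n)}+\eta^{2s}|\mu|)$. Three facts give a.e.\ convergence of $v_\mu$ to $\alpha\phi_0$ extended by zero: local uniform convergence in $\Omega_0$, your decay estimate, and the fact that $\partial\Omega_0$ and $\partial\Omega$ are null sets. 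Dominated convergence then makes these weighted norms close to those of $\alpha\phi_0(x_i+\eta\,\cdot)$. The latter are small for small $\eta$, because $\phi_0$ is uniformly continuous and vanishes off $\Omega_0$. The same a.e.\ convergence, rather than a uniform convergence you do not yet have, is what stability needs to identify the limit in $\Omega_0$.

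A side remark on Step 2: the Keller--Osserman scaling $\varepsilon^{-1/(p-1)}$ is the local mechanism. It would give $v_\mu\lesssim M_\mu^{-1}$, i.e.\ $u_\mu$ bounded off the sanctuary, which is false here. The $O(1)$ nonlocal tail forces the scale $\varepsilon^{-1/p}$. You can obtain it in one line from a quadratic upper test at a maximum of $v_\mu-A|x-\cdot|^2$.

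In Step 3, interior $C^{1,1}$ or $C^{2s+\alpha}$ estimates are not available for inf--sup operators with merely measurable kernels. In any case, estimates for $Iv_\mu=f$ are controlled by $\|f\|_\infty$ and the tail of $v_\mu$, both of order one. So they cannot give the $M_\mu^{-(p-1)/p}$-smallness you assert.

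No regularity is needed. Write $M_j:=M_{\mu_j}$, $v_j:=v_{\mu_j}$, $w_j:=w_{\mu_j}$, and take $x_j\to x_\infty\in K$. Let $y_j$ maximize $w_j-M_j^{1-1/p}|x_j-\cdot|^2$ on $\overline{B_r(x_j)}$, equivalently $v_j-|x_j-\cdot|^2$. Then $w_j(x_j)\le w_j(y_j)$, and $|x_j-y_j|^2\le v_j(y_j)-v_j(x_j)\to0$ because $v_j\to\phi_0$ uniformly and $\phi_0=0$ near $K$. Now use the subsolution inequality at $y_j$. The quadratic patch on $B_\eta(y_j)$ contributes $O(\Lambda\eta^{2-2s})$. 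For $|h|\ge\eta$, the differences $\delta(v_j,y_j,h)$ converge uniformly to $\delta(\phi_0,x_\infty,h)$, which vanishes for small $|h|$. Hence $\limsup_j w_j(x_j)^p\le(I\phi_0(x_\infty)+C\eta^{2-2s})/k(x_\infty)$. A minimizer together with the supersolution inequality gives the reverse bound. This step uses uniform convergence in $\R^n$, which is why the interface gap in Step 2 matters.

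Finally, you never prove the first claim, locally uniform divergence in $\Omega$. Step 1 gives only $M_\mu\to\infty$. Steps 2--3 give divergence on compacts of $\Omega_0$ and of $\Omega\setminus\overline{\Omega_0}$, but nothing at points of $\partial\Omega_0\cap\Omega$. The paper argues directly. Suppose $u_{\mu_j}(x_j)\le L$ with $x_j$ in a compact subset of $\Omega$. A quadratic lower test at a minimizer $y_j$ of $u_{\mu_j}+A|x_j-\cdot|^2$ keeps $u_{\mu_j}(y_j)\le L$. The kernel lower bound then lets the patched operator pick up $\inf_E u_{\mu_j}\to\infty$ on a ball $E\Subset\Omega_0$. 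Thus $Iz_j(y_j)\to\infty$, contradicting the supersolution bound $Iz_j(y_j)\le\|k\|_\infty L^p+|\mu_j|L$.
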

For $x\in\Omega\setminus\overline{\Omega_0}$, the quantity
$I\phi_0(x)$ is classically well defined; moreover, it is continuous and
strictly positive there.  These facts are verified in the proof of the final
assertion below, so the second profile in Theorem~\ref{thm-upper-asymptotics}
is well defined.

Thus the solution blows up throughout $\Omega$, but on two distinct scales.
On the sanctuary, the dominant scale is
$\|u_\mu\|_{L^\infty(\Omega)}$, and the normalized profile converges to the
principal eigenfunction of $\Omega_0$.  Outside the sanctuary this normalized
profile vanishes, but a second, smaller scale survives: the solution is of
order $\|u_\mu\|_{L^\infty(\Omega)}^{1/p}$ and has the explicit limit displayed
above.  This two-scale structure reflects the long-range coupling of the
nonlocal diffusion between $\Omega_0$ and its complement.

We now place these results in the context of the existing literature.  For the
local problem
\[
\Delta u+\mu u=k(x)u^p\quad\text{in }\Omega,
\qquad u=0\quad\text{on }\partial\Omega,
\]
the study is related to the classical work of Kazdan--Warner
\cite{kazdan1975existence} on prescribed scalar curvature.  For the local
problem, the case where $k$ vanishes on a nonempty region was studied by Ouyang
\cite{ouyang1991positive,ouyang1992positive} and del Pino
\cite{del1994positive}; see also del Pino--Felmer \cite{del1995multiple}.
Du--Huang \cite{du1999blow} developed the upper-endpoint
blow-up theory for the local problem, relating the limiting behavior to an
associated boundary blow-up problem outside the sanctuary.  In a different singular perturbation setting, Li--L\'opez-G\'omez--Sun
\cite{li2021sharpblowup} showed that spatial degeneracy can produce
distinct blow-up rates in the degenerate region and its complement.  Recently,
Birindelli--Galise--Leoni
\cite{birindelli2026fully} extended this theory to fully
nonlinear local operators and allowed the operator to depend on $x$.  By
contrast, we only treat the $x$-independent setting; see
Remark~\ref{rem-xdependence} below.

The study of nonlocal logistic equations with a sanctuary region was initiated by
Garc\'{i}a-Meli\'{a}n--Rossi \cite{garcia2009logistic};
for earlier works on nonlocal logistic equations with a strictly positive nonlinear
coefficient, see the references therein.  Their model is
\[
(J*u-u)+\mu u=k(x)u^p\quad\text{in }\Omega,
\qquad u=0\quad\text{in }\R^n\setminus\Omega,
\]
where $J$ is smooth, compactly supported, nonnegative, and has unit mass.
They obtained the optimal existence interval and a detailed description of the
upper-endpoint profile.  Writing $M_\mu=\|u_\mu\|_{L^\infty(\Omega)}$, they
showed that $u_\mu/M_\mu$ converges on the sanctuary to its principal
eigenfunction.  Because $J$ is compactly supported, the complement of the
sanctuary decomposes into successive regions of influence, and on the $m$th
region the corresponding scale is $M_\mu^{1/p^m}$.  In particular, the first
exterior profile is obtained from the convolution of the sanctuary
eigenfunction with $J$.  This provides a useful contrast with the present
setting: the kernels in \eqref{I} have full support and are bounded below by a
multiple of $|y|^{-n-2s}$, so every point outside the sanctuary interacts
directly with it.  Accordingly, the hierarchy of scales collapses here to the
two scales $M_\mu$ and $M_\mu^{1/p}$.

Coville \cite{coville2015nonlocal} subsequently studied
more general heterogeneous nonlocal refuge models with continuous kernels $K(x,y)$.
Removing the compact-support assumption on the convolution kernel in
\cite{garcia2009logistic}, Sun--You--Qiao
\cite{sun2021degeneracy} identified the normalized upper-endpoint
profiles at the $M_\lambda$- and $M_\lambda^{1/p}$-scales.

Related upper-endpoint blow-up phenomena for logistic equations in mixed
local--nonlocal settings were studied in
\cite{cintra2026asymptotic,cintra2025elliptic}.
Nonlocal logistic equations without a sanctuary have also been studied for broader classes of singular L\'evy operators; see, for example, Biswas--Modasiya \cite{biswas2022harvesting} and the references therein.

For the fractional Laplacian with Dirichlet condition, Klimsiak
\cite{klimsiak2023asymptotics} characterized the optimal eigenvalue interval for
positive stationary solutions in the presence of a refuge. The same paper also
studies a different asymptotic regime: as the reaction exponent tends to
infinity, the solutions converge to an obstacle-problem limit.
More recently, Constantin--Santos--Warnault
\cite{constantin2025logistic} studied elliptic and
parabolic logistic problems driven by the fractional $p$-Laplacian. In the
linear-diffusion case, their elliptic problem contains the fractional-Laplacian
instance of \eqref{e1}, and they obtained the same optimal existence interval
and the vanishing and blow-up behavior at its endpoints. In contrast with the
probabilistic approach of \cite{klimsiak2023asymptotics} and the variational
weak-solution framework of \cite{constantin2025logistic},
our approach is nonvariational and relies on maximum principles and viscosity
solution theory, allowing us to treat fully nonlinear nonlocal operators.

As noted above, normalized upper-endpoint profiles have been obtained for integrable convolution kernels, notably in \cite{garcia2009logistic,sun2021degeneracy}. To the best of our knowledge, however, the two profile limits in Theorem~1.3 are new even for the fractional Laplacian.

The paper is organized as follows.  Section~\ref{s2} develops the viscosity
framework, the comparison principle, and the principal-eigenvalue tools used
throughout the paper.  Section~3 constructs sub- and supersolutions and proves
Theorem~\ref{thm11} by Perron's method.  Section~4 is devoted to the endpoint
asymptotics and, in particular, to the two-scale upper-endpoint profile.
To keep the paper self-contained, the appendices establish the difference
inequality for sub- and supersolutions and carry out the Perron construction
for fully nonlinear nonlocal equations with nonlinear zeroth-order terms.

\section{Preliminaries and comparison principles}\label{s2}
We collect here the analytic tools used in the proof of the main theorem.

\subsection{Viscosity framework and difference inequalities}
For an open set $D\subset\R^n$, set
\[
d_D(x):=\dist(x,\R^n\setminus D),
\qquad
D_\theta:=\{x\in D:d_D(x)<\theta\},\quad \theta>0.
\]
For $t\in\R$, set $t^+:=\max\{t,0\}$ and $t^-:=\max\{-t,0\}$.
We impose the following condition on the nonlocal operator $I$:
\begin{enumerate}
    \item[(H1)] There exist constants $s\in(0,1)$ and $0<\lambda\leq\Lambda$ such that $I$ is of the form \eqref{I}, where $\{K_{a,b}\}_{a\in A,b\in B}$ is a family of measurable symmetric kernels satisfying
\[
\frac{\lambda}{|y|^{n+2s}}\leq K_{a,b}(y)\leq \frac{\Lambda}{|y|^{n+2s}}
\]
for every $a\in A$, $b\in B$, and $y\in\R^n\setminus\{0\}$.
\end{enumerate}
Throughout the remainder of the paper, \textup{(H1)} is assumed without further mention.
The Pucci extremal operators $\M^+$ and $\M^-$ are defined by
 \begin{align*}
     \M^+u(x)&=\int_{\R^n}\frac{\Lambda\delta(u,x,y)^+-\lambda\delta(u,x,y)^-}{|y|^{n+2s}}\,dy,\\
     \M^-u(x)&=\int_{\R^n}\frac{\lambda\delta(u,x,y)^+-\Lambda\delta(u,x,y)^-}{|y|^{n+2s}}\,dy.
 \end{align*}
 Since the kernels in (H1) are independent of $x$, the operator $I$ is
 translation invariant.  The operator is also positively $1$-homogeneous and
 invariant under addition of constants.  Moreover,
 \[
 \M^-(u-v)(x)\leq Iu(x)-Iv(x)\leq \M^+(u-v)(x)
 \]
 for all bounded functions $u,v$ that are $C^2$ in a neighborhood of $x$.

\begin{rem}\label{rem-xdependence}
The $x$-independence in \textup{(H1)} enters the proof of Lemma~\ref{lem-difference}
through the translation-invariant sup/inf-convolution argument of
\cite{caffarelli2009regularity} and is also used in the principal-eigenvalue
theory invoked from \cite{quaas2020principal}.  If a corresponding difference inequality for sub- and supersolutions and principal-eigenvalue results were available for an $x$-dependent uniformly
elliptic Isaacs class, the arguments of Sections~3 and~4 would apply in that
setting as well.
\end{rem}

 We write $USC(D)$ and $LSC(D)$ for the upper and lower
 semicontinuous functions on a set $D$, respectively.  We next recall the
 definition of viscosity solutions.
 \begin{dfn}\label{def-viscosity}
     Let $J\subset\R$ be an interval and let $F\in C(\Omega\times J)$.
     We say that $u\in USC(\Omega)\cap L^\infty(\R^n)$, taking values in
     $J$ in $\Omega$, is a viscosity subsolution of
     \[
        Iw=F(x,w)\quad\text{in }\Omega
     \]
     if, for every $x_0\in\Omega$, every neighborhood $V\Subset\Omega$ of
     $x_0$, and every $\zeta\in C^2(\overline V)$ touching $u$ from above at
     $x_0$ in $V$, the patched function
     \[
     z(y)=
     \begin{cases}
         \zeta(y),&y\in V,\\
         u(y),&y\in\R^n\setminus V
     \end{cases}
     \]
     satisfies
     \[
        Iz(x_0)\geq F(x_0,u(x_0)).
     \]
     A viscosity supersolution is defined analogously, with
     $v\in LSC(\Omega)\cap L^\infty(\R^n)$, a lower test function, and the
     reverse inequality.  A continuous function is a viscosity solution if it
     is both a subsolution and a supersolution.

     For \eqref{e1}, sub- and supersolutions are always assumed nonnegative in $\Omega$;
     the exterior Dirichlet condition is understood in the ordered sense
     $u\leq0$ for subsolutions and $v\geq0$ for supersolutions in
     $\R^n\setminus\Omega$.
 \end{dfn}

Throughout the paper, unless explicitly stated otherwise, equations and
inequalities involving semicontinuous functions are understood in the
viscosity sense of Definition~\ref{def-viscosity}.

The following lemma is a zeroth-order extension of the difference theorem in
\cite[Theorem~5.9]{caffarelli2009regularity}, allowing the right-hand side to
depend continuously on the value of the solution.
\begin{lem}[Difference inequality with a continuous majorant]\label{lem-difference}
    Let $J\subset\mathbb R$ be a compact interval, let
    $F,G\in C(\overline\Omega\times J)$, and let
    $u\in USC(\Omega)\cap L^\infty(\R^n)$ and
    $v\in LSC(\Omega)\cap L^\infty(\R^n)$ take values in $J$ in $\Omega$.
    Assume that $u$ and $v$ are, respectively, a viscosity subsolution and a
    viscosity supersolution of
    \[
       Iu=F(x,u)\quad\text{in }\Omega,
       \qquad
       Iv=G(x,v)\quad\text{in }\Omega.
    \]
    Suppose in addition that there is
    $H\in C(\overline\Omega\times\mathbb R)$ such that
    \begin{equation}\label{eq-continuous-majorant}
       G(x,b)-F(x,a)\leq H(x,b-a)
       \qquad (x\in\overline\Omega,\ a,b\in J).
    \end{equation}
    Then $w:=v-u\in LSC(\Omega)\cap L^\infty(\R^n)$ is a viscosity
    supersolution of
    \begin{equation}\label{eq-difference-semicont}
       \M^-w=H(x,w)\qquad\text{in }\Omega.
    \end{equation}
\end{lem}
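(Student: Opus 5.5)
The plan is to follow the Caffarelli--Silvestre proof of \cite[Theorem~5.9]{caffarelli2009regularity}, tracking the zeroth-order term through the regularizations.

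\textbf{Step 1: regularization.} Replace $u$ by its sup-convolution $u^\varepsilon$ and $v$ by its inf-convolution $v_\varepsilon$. Here $u^\varepsilon(x)=\sup_y\{u(y)-|x-y|^2/\varepsilon\}$, and analogously for $v_\varepsilon$. Since $I$ is translation invariant (this is where (H1) enters), the shifted function $u(\cdot+h)$ is a subsolution of $Iw=F(x+h,w)$. Taking the supremum over shifts, $u^\varepsilon$ is a subsolution of $Iu^\varepsilon\ge F(x^*,u^\varepsilon(x)+|x-x^*|^2/\varepsilon)$ in a slightly smaller set $\Omega^{(\varepsilon)}$, where $x^*$ is the point attaining the sup. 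Because $u$ is bounded, $|x-x^*|\le C\sqrt\varepsilon$.

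Using the uniform continuity of $F$ on the compact set $\overline\Omega\times J$, we obtain
\[
Iu^\varepsilon\ge F(x,u^\varepsilon)-\omega(\varepsilon),
\]
and similarly $Iv_\varepsilon\le G(x,v_\varepsilon)+\omega(\varepsilon)$. Here $\omega(\varepsilon)\to0$ is a modulus. There is a minor point when the values leave $J$; I would handle it by extending $F,G$ continuously beyond $J$, or by noting that $u^\varepsilon\in J$ up to $O(\varepsilon)$ and using a slightly enlarged compact interval.

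\textbf{Step 2: the difference for regularized functions.} The function $u^\varepsilon$ is semiconvex and $v_\varepsilon$ is semiconcave. Hence at any point where a smooth function touches $w_\varepsilon=v_\varepsilon-u^\varepsilon$ from below, both $u^\varepsilon$ and $v_\varepsilon$ are touched by paraboloids from above and from below, respectively. By \cite[Lemma~4.3 and proof of Theorem~5.9]{caffarelli2009regularity} they are then classically $C^{1,1}$ at that point, so $Iu^\varepsilon$ and $Iv_\varepsilon$ are evaluated pointwise and the inequalities hold pointwise. Subtracting and using the Pucci bound
\[
Iv-Iu\ge\M^-(v-u),
\]
we get, in the viscosity sense,
\[
\M^-w_\varepsilon\le G(x,v_\varepsilon)-F(x,u^\varepsilon)+2\omega(\varepsilon)\le H(x,w_\varepsilon)+2\omega(\varepsilon).
\]
The last inequality is \eqref{eq-continuous-majorant}, with $J$ possibly slightly enlarged.

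I expect the main obstacle to be this step. The pointwise evaluation must be made rigorous for test functions that touch only locally, with the patched function as in Definition~\ref{def-viscosity}. I would reproduce the Caffarelli--Silvestre argument, which shows the $\delta$-integrals are finite and that the inequalities pass to the patched test function via monotonicity in the far region.

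\textbf{Step 3: passage to the limit.} As $\varepsilon\to0$, $u^\varepsilon\downarrow u$ and $v_\varepsilon\uparrow v$ pointwise. Hence $w_\varepsilon\to w$ in the $\Gamma$-liminf sense, the functions are uniformly bounded, and the exterior values are unchanged modulo the usual convergence. Stability of viscosity supersolutions under lower half-relaxed limits for nonlocal operators \cite[Lemma~4.5]{caffarelli2009regularity} then applies. Dominated convergence handles the tails, using uniform $L^\infty$ bounds and the kernel bound $|y|^{-n-2s}$. Together with the continuity of $H$, this yields that $w$ is a supersolution of $\M^-w=H(x,w)$ in every $\Omega'\Subset\Omega$, hence in $\Omega$. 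The semicontinuity $w\in LSC(\Omega)$ and boundedness are immediate.
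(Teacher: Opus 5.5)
Your overall scheme (sup/inf-convolution, punctual $C^{1,1}$ from semiconvexity and semiconcavity, the Pucci bound, then a limit) is the one the paper uses, and your Step~2 is fine. Step~1, however, has a genuine gap, and it sits exactly at the new ingredient of this lemma: the dependence of $F$ and $G$ on the state variable. From $u^\varepsilon(x)=u(x^*)-|x-x^*|^2/\varepsilon$ you correctly get $Iu^\varepsilon(x)\geq F(x^*,u(x^*))$. To replace the right-hand side by $F(x,u^\varepsilon(x))-\omega(\varepsilon)$ via uniform continuity, both arguments of $F$ must move by $o(1)$.

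The spatial shift does, since $|x-x^*|\leq C\sqrt\varepsilon$. The shift in the value, $u(x^*)-u^\varepsilon(x)=|x-x^*|^2/\varepsilon$, is only bounded by $\operatorname{osc}u$. It is uniformly small when $u$ is uniformly continuous, but here $u$ is merely upper semicontinuous, so nothing makes it small. This generality matters: through Lemma~\ref{lem4}, the lemma is applied to the semicontinuous Perron envelopes $W^*,W_*$.

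Monotonicity cannot absorb the shift either. You would need $F$ and $G$ nondecreasing in $r$, i.e.\ properness. That is not assumed, and it fails for the nonlinearity $t^{1-p}k(x)r^p-\mu r$ used in Lemma~\ref{lem4} when $\mu>0$. Your remark about enlarging $J$ only concerns the range of the values, not this mismatch. Hence the inequality $\M^-w_\varepsilon\leq H(x,w_\varepsilon)+2\omega(\varepsilon)$ in Step~2 is not established, and the general stability lemma in Step~3 has no valid approximate equation to act on.

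The missing idea is twofold: never trade the true values for $u^\varepsilon(x)$, $v_\varepsilon(x)$ in the zeroth-order term, and do the limit by hand at the contact points.

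\begin{enumerate}
\item Fix a lower test $\psi$ with strict contact at $x_0$. Take points $x_\varepsilon\to x_0$ where $\psi+c_\varepsilon$ touches $w_\varepsilon$ from below, so that $w_\varepsilon(x_\varepsilon)\to w(x_0)$. Let $y_\varepsilon,z_\varepsilon$ be the active points of $u^\varepsilon,v_\varepsilon$ at $x_\varepsilon$, and set $\alpha_\varepsilon=|x_\varepsilon-y_\varepsilon|^2/\varepsilon$ and $\beta_\varepsilon=|x_\varepsilon-z_\varepsilon|^2/\varepsilon$.
\item The viscosity inequalities, for suitably patched and translated functions, give $IU_\varepsilon(x_\varepsilon)\geq F(y_\varepsilon,u(y_\varepsilon))$ and $IV_\varepsilon(x_\varepsilon)\leq G(z_\varepsilon,v(z_\varepsilon))$.
\item Apply \eqref{eq-continuous-majorant} with $a=u(y_\varepsilon)$ and $b=v(z_\varepsilon)$. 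These lie in $J$ if the convolutions are taken over a ball inside $\Omega$, so no extension of $F,G$ is needed.
\item Writing $q_\varepsilon$ for the patched test with local branch $\psi+c_\varepsilon$, this yields $\M^-q_\varepsilon(x_\varepsilon)\leq H\bigl(x_\varepsilon,v(z_\varepsilon)-u(y_\varepsilon)\bigr)+o(1)$. Note that $v(z_\varepsilon)-u(y_\varepsilon)=w_\varepsilon(x_\varepsilon)-\alpha_\varepsilon-\beta_\varepsilon$, which is not a function of $w_\varepsilon(x_\varepsilon)$.
\item Since $y_\varepsilon,z_\varepsilon\to x_0$, lower semicontinuity of $v$ and upper semicontinuity of $u$ give $\liminf\bigl(v(z_\varepsilon)-u(y_\varepsilon)\bigr)\geq w(x_0)$. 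Together with $w_\varepsilon(x_\varepsilon)\to w(x_0)$ and $\alpha_\varepsilon,\beta_\varepsilon\geq0$, this forces $\alpha_\varepsilon+\beta_\varepsilon\to0$ and $v(z_\varepsilon)-u(y_\varepsilon)\to w(x_0)$.
\item Continuity of $H$ and Lemma~\ref{lem-patched-stability}, applied to $q_\varepsilon$ and the patched test $q$ for $w$ at $x_0$, then give $\M^-q(x_0)\leq H(x_0,w(x_0))$.
\end{enumerate}

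This is precisely how the paper's proof handles the state-dependent term.
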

The proof is given in Appendix~\ref{app-difference}.

We shall use the immediate consequence that a bounded continuous function
cannot solve $Iw=f(x)$ and $Iw=g(x)$ in the same domain unless $f=g$; this
follows by applying Lemma~\ref{lem-difference} in the two orders.  We refer to
this below as uniqueness of the continuous forcing.

\subsection{Maximum and comparison principles}
We first record a minimum principle on sets of small measure.
\begin{lem}[Small-measure minimum principle]\label{lem-small-measure}
    For every $\mu\in\R$, there exists
    $c_1=c_1(n,s,\lambda,\mu)>0$ with the following property.  Let $D$ be a
    bounded open set with $|D|\leq c_1$, and let
    $q\in LSC(\overline D)\cap L^\infty(\R^n)$ satisfy
    \[
       q\geq0\quad\text{in }\R^n\setminus D.
    \]
    Assume that $q$ is a viscosity supersolution of
    \[
       \M^-q+\mu q=0\quad\text{in }D.
    \]
    Then $q\geq0$ in $\R^n$.
\end{lem}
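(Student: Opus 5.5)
We argue by contradiction at a negative minimum, in the spirit of the Alexandrov--Bakelman--Pucci type estimate of Caffarelli--Silvestre. The key fact is a pointwise lower bound for the nonlocal Pucci operator at a global minimum: it gives positive mass of order $|D|^{-2s/n}$, which the zeroth-order term $\mu q$ cannot compensate when $|D|$ is small.

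First reduce to $\mu>0$. If $\mu\le0$ and $q$ has a negative value, then $\mu q\ge 0$ at points where $q<0$, so the argument below only improves. In fact, $q$ is also a supersolution of $\M^-q+\mu^+q\le0$ near points where $q<0$, so it suffices to treat $\mu^+$ and let $c_1$ depend on $\mu^+$.

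Suppose $m:=\inf_{\R^n}q<0$. Since $q\ge0$ outside $D$ and $q\in LSC(\overline D)$, the infimum is attained at some $x_0\in D$. The constant function $\zeta\equiv m$ touches $q$ from below at $x_0$ on any neighborhood $V\Subset D$ of $x_0$. The patched function $z$ equals $m$ on $V$ and $q$ outside $V$, and the supersolution property gives
\[
\M^-z(x_0)+\mu m\le 0 .
\]
Because $z\ge m=z(x_0)$ everywhere, we have $\delta(z,x_0,y)\ge0$, hence
\[
\M^-z(x_0)=\lambda\int_{\R^n}\frac{\delta(z,x_0,y)}{|y|^{n+2s}}\,dy\ \ge\ \lambda\int_{\R^n\setminus(V-x_0)}\frac{z(x_0+y)-m}{|y|^{n+2s}}\,dy,
\]
where we dropped the nonnegative $z(x_0-y)-m$ contribution. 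For $x_0+y\notin D$ we have $z(x_0+y)-m\ge -m=|m|$. Therefore
\[
\M^-z(x_0)\ \ge\ \lambda|m|\int_{\{x_0+y\notin D\}}\frac{dy}{|y|^{n+2s}}.
\]
The set $\{y: x_0+y\in D\}$ has measure $|D|$. The integral of $|y|^{-n-2s}$ over the complement of a set of measure $|D|$ is minimized when that set is the ball $B_r$ with $|B_r|=|D|$. This gives
\[
\int_{\{x_0+y\notin D\}}\frac{dy}{|y|^{n+2s}}\ \ge\ \int_{|y|>r}\frac{dy}{|y|^{n+2s}}=C(n,s)\,r^{-2s}=C'(n,s)\,|D|^{-2s/n}.
\]
Combining, we get
\[
0\ge\M^-z(x_0)+\mu m\ge |m|\bigl(\lambda C'|D|^{-2s/n}-\mu\bigr),
\]
which is impossible if $|D|<(\lambda C'/\mu)^{n/(2s)}$. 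This defines $c_1$ (take any $c_1$ if $\mu\le0$). The independence of the bound from $V$ is harmless, since the estimate only uses the region outside $D$, which lies outside $V$.

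The main point needing care is the viscosity step. One has to confirm that the constant test function is admissible and that $\M^-z(x_0)$ is well defined, possibly as $+\infty$, which is acceptable. The integrand is nonnegative, so no principal value issue arises, and the supersolution inequality applied to $z$ is exactly what Definition~\ref{def-viscosity} provides for $\M^-$ in place of $I$. The rearrangement inequality for $|y|^{-n-2s}$ is elementary, since the kernel is radially decreasing.
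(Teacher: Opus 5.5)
Your proof is correct and is essentially the paper's argument: a constant lower test at the negative minimum $x_0\in D$, nonnegativity of all second differences of the patched function, restriction of the integral to $\R^n\setminus D$ where $z-m\geq|m|$, and the bound $\int_{\R^n\setminus D}|y-x_0|^{-n-2s}\,dy\geq c(n,s)|D|^{-2s/n}$. You obtain that bound by comparison with a ball of equal measure instead of citing Savin--Valdinoci's Lemma~A.1, and dropping the symmetric term only costs a harmless factor $2$. One small point to tidy: the hypothesis allows $|D|=c_1$, so you should take $c_1$ strictly smaller than $(\lambda C'/\mu^+)^{n/(2s)}$ (any $c_1$ works when $\mu\leq0$), exactly as the paper does by requiring $2\lambda c_*c_1^{-2s/n}>\mu^+$.
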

\begin{proof}
    Suppose, to the contrary, that $q<0$ somewhere.  Then
    $m:=\min_{\overline D}q<0$, attained at $x_0\in D$.  Fix
    $r<\dist(x_0,\partial D)$ and use the constant function $m$ as a lower
    test for $q$ in $B_r(x_0)$.  The corresponding patched function
    \[
       z(x)=
       \begin{cases}
          m,&x\in B_r(x_0),\\
          q(x),&x\in\R^n\setminus B_r(x_0)
       \end{cases}
    \]
    satisfies $z\geq m=z(x_0)$ in $\R^n$.  Hence every second difference of
    $z$ at $x_0$ is nonnegative, and therefore
    \[
       \M^-z(x_0)
       =2\lambda\int_{\R^n}
          \frac{z(y)-m}{|y-x_0|^{n+2s}}\,dy
       \geq2\lambda(-m)
          \int_{\R^n\setminus D}\frac{dy}{|y-x_0|^{n+2s}},
    \]
    because $z=q\geq0$ on $\R^n\setminus D$.  By
    \cite[Lemma~A.1]{savin2014density}, there exists
    $c_*=c_*(n,s)>0$ such that
    \[
       \int_{\R^n\setminus D}\frac{dy}{|y-x_0|^{n+2s}}
       \geq c_*|D|^{-2s/n}.
    \]
    The viscosity inequality at $x_0$ gives
    \[
       0\geq \M^-z(x_0)+\mu m
       \geq(-m)\bigl(2\lambda c_*|D|^{-2s/n}-\mu\bigr).
    \]
    Choose $c_1>0$ so that
    $2\lambda c_*c_1^{-2s/n}>\mu^+$.  The right-hand side is then
    strictly positive, a contradiction.
\end{proof}

We use the following strong maximum principle, whose zero-order-free case is
\cite[Theorem~2.8]{quaas2020principal}.
\begin{lem}[Strong maximum principle]\label{lem-smp}
    Let $C\in\R$, and suppose that
    $w\in LSC(\Omega)\cap L^\infty(\R^n)$ satisfies
    $w\geq0$ in $\R^n$ and is a viscosity supersolution of
    \[
    \M^-w=Cw\quad\text{in }\Omega.
    \]
    Then either $w>0$ in $\Omega$ or
    $w=0$ in $\Omega$ and almost everywhere in $\R^n$.
\end{lem}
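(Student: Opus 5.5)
The plan is to reduce to the zero-order-free strong maximum principle \cite[Theorem~2.8]{quaas2020principal} by absorbing the term $Cw$, and to split according to the sign of $C$.

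\emph{Case $C\le 0$.} Since $w\ge0$, we have $Cw\le 0$ pointwise. Hence, at any point where a lower test function touches $w$, the viscosity inequality $\M^-z(x_0)\le Cw(x_0)$ gives $\M^-z(x_0)\le0$. So $w$ is a viscosity supersolution of $\M^-w=0$ in $\Omega$, and the cited theorem applies directly.

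\emph{Case $C>0$.} The inequality only yields $\M^-z(x_0)\le Cw(x_0)$, and the right-hand side may be positive. I would argue directly, mimicking the proof of Lemma~\ref{lem-small-measure}. Suppose $w(x_0)=0$ for some $x_0\in\Omega$. Then $x_0$ is a global minimum of $w$, since $w\ge0$ in $\R^n$. Test with the constant $0$ on a ball $B_r(x_0)\Subset\Omega$. The patched function $z$ equals $0$ on $B_r(x_0)$ and $w$ outside, so all second differences at $x_0$ are nonnegative. The viscosity inequality then reads
\[
2\lambda\int_{\R^n\setminus B_r(x_0)}\frac{w(y)}{|y-x_0|^{n+2s}}\,dy=\M^-z(x_0)\le C\,w(x_0)=0 .
\]
This forces $w=0$ a.e.\ in $\R^n\setminus B_r(x_0)$. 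Letting $r\downarrow0$ gives $w=0$ a.e.\ in $\R^n$. By lower semicontinuity and $w\ge0$, we get $w=0$ at every point of $\Omega$ where $w$ is a limit of zeros; since the zero set has full measure, its points are dense, and lower semicontinuity with $w\ge 0$ does not immediately give $w\equiv0$.

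The main obstacle is therefore the passage from ``a.e.'' to ``everywhere'' in $\Omega$. I would close it as follows. Suppose $w(x_1)>0$ for some $x_1\in\Omega$. By lower semicontinuity, $w\ge w(x_1)/2>0$ on a neighborhood of $x_1$, which has positive measure. This contradicts $w=0$ a.e. Hence $w=0$ in $\Omega$.

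In fact the direct argument works for every $C\in\R$, since at a zero of $w$ the term $Cw(x_0)$ vanishes. So the splitting is only a convenience, and the statement follows in all cases: either $w$ has no zero in $\Omega$, so $w>0$ there, or $w=0$ in $\Omega$ and a.e.\ in $\R^n$.
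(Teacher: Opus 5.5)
Your argument is correct and is essentially the paper's proof: test with the constant $0$ at an interior zero, read off
\[
2\lambda\int_{\R^n\setminus B_r(x_0)}\frac{w(y)}{|y-x_0|^{n+2s}}\,dy\le Cw(x_0)=0,
\]
let $r\downarrow0$, and use lower semicontinuity to pass from a.e.\ vanishing to $w\equiv0$ in $\Omega$. As you note yourself, this needs no split on the sign of $C$. Your hesitation in the middle is also unfounded: the zero set is dense in $\Omega$, so $w(x)\le\liminf_{y\to x}w(y)=0$ at every $x\in\Omega$ directly, which is equivalent to your final neighborhood argument.
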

\begin{proof}
    Suppose that $w(x_0)=0$ for some $x_0\in\Omega$.  Fix
    $0<r<\dist(x_0,\R^n\setminus\Omega)$ and let $z_r=0$ in
    $B_r(x_0)$ and $z_r=w$ in $\R^n\setminus B_r(x_0)$.  Since the constant
    zero touches $w$ from below at $x_0$, the viscosity inequality gives
    \[
       0\geq \M^-z_r(x_0)
       =2\lambda\int_{\R^n\setminus B_r(x_0)}
         \frac{w(y)}{|y-x_0|^{n+2s}}\,dy\geq0.
    \]
    Hence $w=0$ almost everywhere outside $B_r(x_0)$.  Letting $r\downarrow0$
    gives $w=0$ almost everywhere in $\R^n$, and lower semicontinuity and
    nonnegativity then imply $w\equiv0$ in $\Omega$.
\end{proof}
\begin{lem}[Comparison principle]\label{lem4}
    Let $\mu\in\R$.
    If $u\in USC(\overline{\Omega})\cap L^\infty(\R^n)$ and
    $v\in LSC(\overline{\Omega})\cap L^\infty(\R^n)$ are, respectively,
    a nonnegative viscosity subsolution and a positive viscosity supersolution
    of \eqref{e1}, then $u\leq v$ in $\Omega$.
\end{lem}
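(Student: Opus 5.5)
We use the scaling trick standard for sublinear-type absorption, adapted to the viscosity setting through Lemma~\ref{lem-difference}. If $u\equiv0$ in $\Omega$ there is nothing to prove, so assume $u\not\equiv0$. Set
\[
\tau^*:=\inf\{\tau\geq1:\ u\leq\tau v\ \text{in }\Omega\}.
\]
We aim to show $\tau^*=1$.

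\textbf{Step 1: $\tau^*<\infty$.} Near $\partial\Omega$ neither $u$ nor $v$ need be bounded below away from $0$. Positivity of $v$ and lower semicontinuity on $\overline\Omega$ only give $v\geq c>0$ on compact subsets $K\Subset\Omega$. Fix such a $K$ with $|\Omega\setminus K|\leq c_1$, where $c_1$ comes from Lemma~\ref{lem-small-measure} for a suitable $\mu'$. Choose $\tau$ large so that $u\leq\tau v$ on $K$. On the thin set $D=\Omega\setminus K$ we then want $q:=\tau v-u\geq0$.

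For this, note that for $\tau\geq1$ the function $\tau v$ is a supersolution of $I w+\mu w=\tau^{1-p}k w^p\leq k w^p$. Apply Lemma~\ref{lem-difference} with $F(x,a)=ka^p-\mu a$ and $G(x,b)=\tau^{1-p}kb^p-\mu b$ on the compact range $J=[0,\max(\|u\|_\infty,\tau\|v\|_\infty)]$. Where $b\leq a$ we have
\[
G(x,b)-F(x,a)\leq k(b^p-a^p)-\mu(b-a)\leq -\mu(b-a)+C(b-a)^-,
\]
which furnishes a continuous majorant of the form $H(x,t)=-\mu t+Ct^-$. The lemma then gives $\M^-q\leq -\mu q+Cq^-$ in $D$. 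Since only the negative part of $q$ matters for the minimum principle, Lemma~\ref{lem-small-measure} with $\mu'=\mu+C$ applies: at a negative minimum the argument of that proof goes through. This yields $q\geq0$ on $D$ once $c_1=c_1(\mu+C)$. The constant $C$ depends on $\tau$ through $J$, which creates a circularity. We break it by using the Lipschitz bound of $t\mapsto t^p$ on $[0,\|u\|_\infty]$ only: at points where $q<0$ we have $b<a\leq\|u\|_\infty$.

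\textbf{Step 2: $\tau^*=1$.} Suppose $\tau^*>1$. Then $w:=\tau^*v-u\geq0$ in $\R^n$, and $\tau^*v$ is a \emph{strict} supersolution: it solves $I(\tau^*v)+\mu\tau^*v\leq(\tau^*)^{1-p}k(\tau^*v)^p$. Lemma~\ref{lem-difference} now gives
\[
\M^-w\leq Cw-\bigl(1-(\tau^*)^{1-p}\bigr)k(\tau^*v)^p.
\]
In particular $\M^-w\leq Cw$. By Lemma~\ref{lem-smp}, either $w>0$ in $\Omega$ or $w=0$ a.e. If $w=0$ a.e., then $u=\tau^*v$, and the strict inequality contradicts the uniqueness of continuous forcing. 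Here we use that $k\not\equiv0$ and $v>0$. If $w>0$ in $\Omega$, we repeat Step 1's compact/thin-set argument with $\tau^*-\varepsilon$ to contradict minimality. This works because $w\geq c$ on $K$, so $u\leq(\tau^*-\varepsilon)v$ on $K$ for small $\varepsilon$, and the thin-set argument extends the inequality to $\Omega$.

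The main obstacle is the boundary layer in Step 1 and in the final perturbation. There we must make the constants of the small-measure principle uniform in $\tau$, and check that the majorant $H$ is genuinely continuous as Lemma~\ref{lem-difference} requires. We will handle this by bounding $H$ only for $b<a$, where everything lives in $[0,\|u\|_\infty]$, and extending $H$ continuously by a linear function.
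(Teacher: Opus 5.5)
Your argument is the paper's proof with $\tau=1/t$: the paper compares $v$ with $tu$ for $t\in(0,1]$, so all values stay in $J=[0,R]$ with $R=\max\{\|u\|_\infty,\|v\|_\infty\}$, and the single majorant $H(x,r)=C_Rr^+-\mu r$ works for every $t$, so the circularity you worry about never arises. In your normalization, Lemma~\ref{lem-difference} needs the majorant on all of $J\times J$, so $-\mu r+Cr^-$ must be replaced by $-\mu r+C_\tau r^+$ (for $b\le a$ one simply has $k(\tau^{1-p}b^p-a^p)\le0$); since $r^+$ vanishes at a negative minimum, $c_1$ depends only on $\mu$, and the rest of your Steps 1--2 goes through exactly as in the paper.
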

\begin{proof}
    For $0<t\leq1$, define
    \[
       F_t(x,r):=t^{1-p}k(x)r^p-\mu r,\qquad r\geq0.
    \]
    By the positive homogeneity of $I$, the function $tu$ is a viscosity
    subsolution of $Iw=F_t(x,w)$ in $\Omega$, whereas $v$ is a
    supersolution with $t=1$.  Put
    \[
       R:=\max\{\|u\|_\infty,\|v\|_\infty\},\qquad
       C_R:=p\|k\|_\infty R^{p-1}.
    \]
    For $a,b\in[0,R]$ and $0<t\leq1$,
    \begin{align*}
       F_1(x,b)-F_t(x,a)
       &=k(x)\bigl(b^p-t^{1-p}a^p\bigr)-\mu(b-a)\\
       &\leq C_R(b-a)^+-\mu(b-a).
    \end{align*}
    Lemma~\ref{lem-difference}, with the continuous majorant
    $H(x,r)=C_Rr^+-\mu r$, therefore gives
    \begin{equation}\label{eq-logistic-difference}
       \M^-w_t+\mu w_t\leq C_R(w_t)^+
       \quad\text{in }\Omega,\qquad w_t:=v-tu.
    \end{equation}

    Choose a boundary layer $D=\Omega_\delta$ with
    $|D|\leq c_1(n,s,\lambda,\mu)$ and put $K=\Omega\setminus D$,
    where $K\Subset\Omega$.  Since $v>0$ and is lower semicontinuous,
    $tu\leq v$ on $K$ for all sufficiently small $t>0$.  Hence
    $w_t\geq0$ in $\R^n\setminus D$.  Define
    \[
       \widehat w_t(x):=
       \begin{cases}
          \min\{w_t(x),0\},&x\in D,\\
          0,&x\in\R^n\setminus D.
       \end{cases}
    \]
    Since $w_t\in LSC(\overline\Omega)$ and $w_t\geq0$ on $\partial D$,
    we have $\widehat w_t\in LSC(\R^n)\cap L^\infty(\R^n)$.  We claim that
    \[
       \M^-\widehat w_t+\mu\widehat w_t\leq0\quad\text{in }D.
    \]
    Indeed, at lower contact points where $\widehat w_t<0$, the claim
    follows from \eqref{eq-logistic-difference} and ellipticity, while at
    those where $\widehat w_t=0$, we have $\M^-\widehat w_t\leq0$.
    Lemma~\ref{lem-small-measure} now gives $\widehat w_t\geq0$, and thus
    $tu\leq v$ in $\Omega$ for all sufficiently small $t$.

    Let
    \[
       \tau=\sup\{t\in(0,1]:tu\leq v\text{ in }\Omega\}.
    \]
    Taking $t\to\tau$, we obtain $\tau u\leq v$ in $\Omega$.  If $\tau<1$, set
    $w=v-\tau u\geq0$.  By
    \eqref{eq-logistic-difference},
    \[
       \M^-w\leq(C_R-\mu)w\quad\text{in }\Omega
    \]
    Lemma~\ref{lem-smp} implies either
    $w>0$ in $\Omega$ or $w=0$ in $\Omega$ and almost everywhere in
    $\R^n$.

    We exclude the latter alternative.  If $w=0$, then $v=\tau u$ in
    $\Omega$ and almost everywhere in $\R^n$.  Since $u$ is upper
    semicontinuous and $v$ is lower semicontinuous, the identity $v=\tau u$
    makes $v$ continuous in $\Omega$.  Thus $v$ is both a subsolution with
    nonlinearity $F_\tau$ and a supersolution with nonlinearity $F_1$.
    Applying Lemma~\ref{lem-difference} to these two inequalities, we obtain
    \[
       0=\M^-0\leq F_1(x,v)-F_\tau(x,v),
    \]
    contradicting $\tau<1$, $v>0$, and $k\not\equiv0$.  Hence $w>0$ in
    $\Omega$.

    Thus $w$ has a positive minimum on $K$, and for some
    $\varepsilon\in(0,1-\tau)$ we have
    $v-(\tau+\varepsilon)u\geq0$ on $K$. Applying the preceding argument with
    $t=\tau+\varepsilon$, Lemma~\ref{lem-small-measure} yields
    \[
    v-(\tau+\varepsilon)u\geq0\quad\text{in }\Omega,
    \]
    contradicting the definition of $\tau$.
\end{proof}

\subsection{Principal eigenvalues and rigidity}
 The convention in \eqref{I} agrees with that of
 \cite{quaas2020principal} when the drift term there is set equal to zero.
 Although the main eigenvalue theorems are stated for $s\in(1/2,1)$ in the
 presence of a drift, the authors observe that in the zero-drift case the same
 conclusions hold for every $s\in(0,1)$.  Following that paper, we denote by
 $\lambda(\Omega)=\lambda_1^+(I,\Omega)$ the positive principal
 half-eigenvalue of $I$ in $\Omega$, namely,
 \[
 \begin{aligned}
 \lambda(\Omega)=\sup\bigl\{\eta\in\R:\;&\exists
 v\in C(\overline\Omega)\cap L^\infty(\R^n),\quad
 v>0\text{ in }\Omega,\\
 &v\geq0\text{ in }\R^n,\quad
 \text{$v$ is a viscosity supersolution of }
 Iv+\eta v=0\text{ in }\Omega\bigr\}.
 \end{aligned}
 \]
 This bounded formulation agrees with \cite{quaas2020principal}: bounded
 admissible functions belong to the weighted tail class used there, while the
 principal eigenfunction has zero exterior values and is bounded.  The
 following is the zero-drift specialization of
 \cite[Theorems~1.1 and~1.2]{quaas2020principal}.
 \begin{prop}[Principal eigenpair]\label{prop21}
     There exists a unique positive viscosity solution $\phi\in C(\R^n)$ of
     \[
     \left\{
     \begin{aligned}
         I\phi+\lambda(\Omega)\phi&=0&&\text{in }\Omega,\\
         \phi&=0&&\text{in }\R^n\setminus\Omega,
     \end{aligned}
     \right.
     \]
     such that $\|\phi\|_{L^\infty(\R^n)}=1$.
 \end{prop}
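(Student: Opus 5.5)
Proposition~\ref{prop21} is the zero-drift case of \cite[Theorems~1.1 and~1.2]{quaas2020principal}, so the plan is mainly to check that the hypotheses and conventions match and that the bounded formulation of $\lambda(\Omega)$ agrees with theirs. I would still sketch the standard argument, both to confirm the reduction and to see which ingredients from Section~\ref{s2} carry it.

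\emph{Existence.} I would follow the Berestycki--Nirenberg--Varadhan / Quaas--Sirakov route.
\begin{enumerate}
\item First show that the sup defining $\lambda(\Omega)$ is finite and positive. Positivity comes from a barrier of the form $v=d_\Omega^{\,s}$ modified near the boundary, or a large-ball profile, which is a supersolution of $Iv+\eta v=0$ for small $\eta>0$. Finiteness comes from testing against a compactly supported bump in a small ball: by Lemma~\ref{lem-small-measure}-type arguments no supersolution can exist for $\eta$ larger than a multiple of $r^{-2s}$.
\item For $\eta<\lambda(\Omega)$, solve $Iu+\eta u=-1$ in $\Omega$ with $u=0$ outside. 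Use Perron's method together with the comparison principle obtained from a maximum principle below the eigenvalue (via a supersolution $v>0$ and the $\tau$-scaling argument of Lemma~\ref{lem4}). This yields a positive solution $u_\eta$.
\item Show $\|u_\eta\|_\infty\to\infty$ as $\eta\uparrow\lambda(\Omega)$. Otherwise the limit would be a positive supersolution for $\eta$ slightly beyond $\lambda(\Omega)$, contradicting the definition.
\item Normalize $\phi_\eta=u_\eta/\|u_\eta\|_\infty$. Uniform H\"older estimates up to the boundary (the interior Caffarelli--Silvestre estimates plus boundary barriers on the $C^2$ domain) give compactness. The limit $\phi$ solves $I\phi+\lambda(\Omega)\phi=0$, has $\|\phi\|_\infty=1$, and vanishes outside $\Omega$; it is positive by Lemma~\ref{lem-smp}.
\end{enumerate}

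\emph{Uniqueness up to scaling, i.e.\ simplicity.} Let $\psi$ be another positive eigenfunction. Set $\tau=\sup\{t:t\psi\le\phi\}$, which is positive because both functions have comparable boundary decay $d^{\,s}$ (Hopf-type lemma). Apply Lemma~\ref{lem-difference} to get $\M^-(\phi-\tau\psi)+\lambda(\Omega)(\phi-\tau\psi)\le0$. By Lemma~\ref{lem-smp}, either $\phi=\tau\psi$, or $\phi-\tau\psi>0$. In the second case, combining the Hopf lemma with the small-measure principle as in Lemma~\ref{lem4} allows $\tau$ to be increased, which is a contradiction. The normalization then fixes $\phi$.

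\emph{Expected obstacle.} The boundary step is the main difficulty: uniform boundary regularity of the $\phi_\eta$, and the Hopf comparison $\phi\asymp d_\Omega^{\,s}$ needed to start the $\tau$-argument. Rather than reprove these, I would cite \cite{quaas2020principal} for these estimates.
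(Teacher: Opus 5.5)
Your primary route is exactly what the paper does, and it suffices: read Proposition~\ref{prop21} off \cite[Theorems~1.1 and~1.2]{quaas2020principal} with zero drift, after checking that the conventions agree and that the bounded definition of $\lambda(\Omega)$ fits their weighted-tail class. In that check, make one point explicit. Those theorems are stated for $s\in(1/2,1)$ because of the drift, so you need the observation that with zero drift the conclusions hold for every $s\in(0,1)$.

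\textbf{The gap in your uniqueness sketch.} You start the sweep from ``comparable boundary decay $d_\Omega^{\,s}$ (Hopf-type lemma)'' and plan to cite \cite{quaas2020principal} for $\phi\asymp d_\Omega^{\,s}$. That step fails under (H1). The kernels need not be homogeneous, and $I$ may be, for instance, $\M^\pm$. Set $e_s(x):=(x_n)_+^s$. For $x_n>0$ one has $\int_{\R^n}\delta(e_s,x,y)|y|^{-n-2s}\,dy=0$, because $e_s$ is $s$-harmonic in the half-space. Hence, if $\lambda<\Lambda$,
\[
\M^\pm e_s(x)=\pm(\Lambda-\lambda)\int_{\R^n}\frac{\delta(e_s,x,y)^+}{|y|^{n+2s}}\,dy\neq0 .
\]
So $e_s$ is a strict subsolution for $\M^+$ and a strict supersolution for $\M^-$. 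The boundary exponent of positive solutions therefore moves below or above $s$. Consequently neither a Hopf bound $\phi\geq c\,d_\Omega^{\,s}$ nor an upper bound $\psi\leq C d_\Omega^{\,s}$ holds in general. This is why the paper only uses the barrier $d_\Omega^\gamma$ with $\gamma<\beta_1$ and a global $C^\beta$ estimate with small $\beta$. The same objection applies to your positivity barrier $d_\Omega^{\,s}$; the large-ball profile is the one that works.

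\textbf{The fix.} The sweep needs no boundary asymptotics at all. Argue as in Lemma~\ref{lem-proportionality}:
\begin{enumerate}
\item Take a compact $K\Subset\Omega$ with $|\Omega\setminus K|\leq c_1$.
\item Set $\tau^{-1}:=\max_K\psi/\phi$. Then $\phi-\tau\psi\geq0$ on $K$ and outside $\Omega$.
\item Apply Lemma~\ref{lem-small-measure} to $\phi-\tau\psi$ in $\Omega\setminus K$.
\item Apply Lemma~\ref{lem-smp} at the point of $K$ where $\phi-\tau\psi$ vanishes.
\end{enumerate}
Thus, once existence is known, uniqueness in Proposition~\ref{prop21} follows at once from Lemma~\ref{lem-proportionality} with $V=\phi$, $U=\psi$, $\eta=\lambda(\Omega)$; the normalization then forces $\tau=1$. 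Likewise, increasing $\tau$ in the argument of Lemma~\ref{lem4} uses only positivity on $K$ and the small-measure principle, not Hopf.
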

 In what follows, we denote this eigenfunction by $\phi(\cdot,\Omega)$.
The following proportionality principle is a semicontinuous version of the
sweeping argument in \cite[Theorem~5.1]{quaas2020principal}.
\begin{lem}[Proportionality principle]\label{lem-proportionality}
    Let $\eta\in\R$.  Suppose that
    $V\in LSC(\overline\Omega)\cap L^\infty(\R^n)$ and
    $U\in USC(\overline\Omega)\cap L^\infty(\R^n)$ satisfy
    \[
    \begin{cases}
        IV+\eta V\leq0 & \text{in }\Omega,\\
        V\geq0 & \text{in }\R^n,
    \end{cases}
    \qquad
    \begin{cases}
        IU+\eta U\geq0 & \text{in }\Omega,\\
        U\leq0 & \text{in }\R^n\setminus\Omega,\\
        U(x_0)>0 & \text{for some }x_0\in\Omega.
    \end{cases}
    \]
    Then there exists $\tau\geq0$ such that
    \[
    V=\tau U\quad\text{in }\Omega
    \quad\text{and almost everywhere in }\R^n.
    \]
\end{lem}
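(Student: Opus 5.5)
The argument is a sweeping one modeled on the proof of Lemma~\ref{lem4}, with the roles reversed: we push multiples of $U$ underneath $V$ until they touch. If $V\equiv0$ in $\Omega$ and a.e.\ in $\R^n$, take $\tau=0$. Otherwise Lemma~\ref{lem-smp} applies, since $\M^-V\leq IV\leq -\eta V$ in $\Omega$ in the viscosity sense (compare $I$ against $I0=0$, using $\M^-V\le IV - I0$ at test functions). It gives $V>0$ in $\Omega$. Define
\[
\tau:=\sup\{t\geq0:\ tU\leq V\text{ in }\Omega\}.
\]
The aim is to show that $0<\tau<\infty$ and then to prove rigidity at $\tau$. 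Finiteness is immediate, because $U(x_0)>0$ while $V(x_0)<\infty$.

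Positivity of $\tau$ follows as in Lemma~\ref{lem4}. Apply Lemma~\ref{lem-difference} with $F(x,r)=G(x,r)=-\eta r$ and majorant $H(x,r)=-\eta r$. This shows that $w_t:=V-tU$ is a viscosity supersolution of $\M^-w_t+\eta w_t\le0$ in $\Omega$. Choose a boundary layer $D=\Omega_\delta$ with $|D|\le c_1(n,s,\lambda,\eta)$ and set $K=\Omega\setminus D\Subset\Omega$. Since $V$ is lower semicontinuous and positive on the compact set $K$, and $U$ is bounded above, we have $tU\le V$ on $K$ for small $t>0$. Outside $\Omega$ we have $U\le0\le V$, so $w_t\ge0$ in $\R^n\setminus D$. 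The truncation $\widehat w_t$ from Lemma~\ref{lem4} then satisfies $\M^-\widehat w_t+\eta\widehat w_t\le0$ in $D$. Lemma~\ref{lem-small-measure} gives $w_t\ge0$ in $\Omega$, hence $\tau>0$. Passing to the limit gives $\tau U\le V$ in $\Omega$, and also in $\R^n\setminus\Omega$ because $U\le0\le V$ there.

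Set $w:=V-\tau U\geq0$ in $\R^n$. It is a supersolution of $\M^-w\le -\eta w$ in $\Omega$. By Lemma~\ref{lem-smp}, either $w=0$ in $\Omega$ and a.e.\ in $\R^n$, which is exactly the conclusion, or $w>0$ in $\Omega$. In the latter case $w$ is lower semicontinuous, so it has a positive minimum on $K$. Since $U$ is bounded above, $V-(\tau+\varepsilon)U\ge0$ on $K$ for some small $\varepsilon>0$. Outside $\Omega$ the inequality holds because $U\le0\le V$. Repeating the small-measure argument in $D$ then gives $V\ge(\tau+\varepsilon)U$ in $\Omega$, which contradicts the maximality of $\tau$.

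The step requiring most care is verifying the hypotheses of Lemma~\ref{lem-difference}. It needs values in a compact interval $J$, which boundedness of $U$ and $V$ supplies, and it must be applied to $tU$, which is a subsolution by homogeneity when $t\geq0$. One also has to check that the semicontinuity conventions (LSC/USC on $\overline\Omega$) make $\widehat w_t$ lower semicontinuous across $\partial D\cap\Omega$, exactly as in Lemma~\ref{lem4}. No uniqueness-of-forcing argument is needed here, because the equations for $V$ and $\tau U$ have the same homogeneous form.
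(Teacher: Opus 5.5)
Your proof is correct and uses the same ingredients as the paper: Lemma~\ref{lem-difference}, the small-measure minimum principle on a boundary layer around a compact core $K$, and Lemma~\ref{lem-smp}. The only difference is that the paper chooses $\tau$ directly via $\tau^{-1}=\max_K U/V$ with $x_0\in K$, which gives a contact point in $K$ at once; this removes your positivity step, the push past $\tau$ by contradiction, and the truncation $\widehat w_t$. The truncation is unnecessary here anyway, since $V-tU$ already satisfies $\M^-q+\eta q\le 0$ with no $(\cdot)^+$ term, so Lemma~\ref{lem-small-measure} applies to it directly.
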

\begin{proof}
    By ellipticity,
    \[
       \M^-V+\eta V\leq0\quad\text{in }\Omega.
    \]
    If $V$ vanishes at some point of $\Omega$, Lemma~\ref{lem-smp} gives
    $V=0$ in $\Omega$ and almost everywhere in $\R^n$, so the conclusion
    holds with $\tau=0$.  Otherwise $V>0$ in $\Omega$.  Choose a compact
    set $K\Subset\Omega$ with $x_0\in K$ and
    $|\Omega\setminus K|\leq c_1(n,s,\lambda,\eta)$, where $c_1$ is the
    constant in Lemma~\ref{lem-small-measure}, and set
    \[
       \max_K\frac{U}{V}=\frac{1}{\tau}.
    \]
    By semicontinuity and the positivity of $V$ on $K$, the maximum is
    attained.  Put $q:=V-\tau U$.  Since $V\geq0$ in $\R^n$ and
    $U\leq0$ in $\R^n\setminus\Omega$, we have
    \[
       q\geq0\quad\text{in }(\R^n\setminus\Omega)\cup K.
    \]
    By Lemma~\ref{lem-difference},
    \[
       \M^-q+\eta q\leq0\quad\text{in }\Omega.
    \]
    Lemma~\ref{lem-small-measure}, applied in $\Omega\setminus K$, yields
    $q\geq0$ in $\R^n$.  Moreover, at a point $\bar x\in K$ where the
    maximum defining $\tau^{-1}$ is attained, $q(\bar x)=0$.  Therefore
    Lemma~\ref{lem-smp} gives $q=0$ in $\Omega$ and almost everywhere in
    $\R^n$.
\end{proof}

The proportionality principle gives the following rigidity statement at the
principal eigenvalue.
\begin{prop}[Rigidity at the principal eigenvalue]\label{prop}
    Suppose that $v\in LSC(\Omega)\cap L^\infty(\R^n)$ satisfies $v\geq0$
    in $\R^n$ and is a viscosity supersolution of
    \[
    Iv+\lambda(\Omega)v=0\quad\text{in }\Omega.
    \]
    Then there exists $\alpha_1\geq0$ such that
    $v=\alpha_1\phi(\cdot,\Omega)$ in $\Omega$ and almost everywhere in
    $\R^n$.

    Suppose that $u\in USC(\overline{\Omega})\cap L^\infty(\R^n)$ is a
    nonnegative viscosity subsolution of
    \[
    Iu+\lambda(\Omega)u=0\quad\text{in }\Omega,
    \]
    and that $u=0$ in $\R^n\setminus\Omega$.
    Then there exists $\alpha_2\geq0$ such that
    $u=\alpha_2\phi(\cdot,\Omega)$ in $\R^n$.
\end{prop}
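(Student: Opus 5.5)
Both assertions will follow from Lemma~\ref{lem-proportionality} with $\eta=\lambda(\Omega)$, using the eigenfunction $\phi:=\phi(\cdot,\Omega)$ of Proposition~\ref{prop21} as the second function in each case. Recall that $\phi\in C(\R^n)$, $\phi>0$ in $\Omega$, $\phi=0$ in $\R^n\setminus\Omega$, and $\phi$ solves $I\phi+\lambda(\Omega)\phi=0$ in $\Omega$ in the viscosity sense. In particular $\phi$ is both a subsolution and a supersolution. The only point needing care is that the regularity classes demanded by Lemma~\ref{lem-proportionality} are met.

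\emph{Supersolution part.} Take $V=v$ and $U=\phi$. Then $U\in C(\overline\Omega)\subset USC(\overline\Omega)$, $IU+\lambda(\Omega)U\ge0$ in $\Omega$, $U\le0$ outside $\Omega$, and $U(x_0)>0$ for any $x_0\in\Omega$. For $V$, the hypothesis gives only $v\in LSC(\Omega)$, while the lemma asks for $LSC(\overline\Omega)$. Inspecting the proof of Lemma~\ref{lem-proportionality}, lower semicontinuity of $V$ is used in three places: on the compact set $K\Subset\Omega$, to attain $\max_K U/V$; inside $\Omega$, for Lemmas~\ref{lem-smp} and~\ref{lem-difference}; and in Lemma~\ref{lem-small-measure} on $D=\Omega\setminus K$. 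If needed, I will replace $v$ near $\partial\Omega$ by its lower semicontinuous envelope on $\overline\Omega$. This changes nothing in $\Omega$, and nonnegativity makes the exterior condition harmless. Alternatively I will note that Lemma~\ref{lem-small-measure} only uses attainment of a negative minimum, which lies in the interior since $q\ge0$ off $D$, and $q\ge0$ near $\partial\Omega$ from outside. The lemma then yields $\tau\ge0$ with $v=\tau\phi$ in $\Omega$ and a.e.\ in $\R^n$, so $\alpha_1=\tau$.

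\emph{Subsolution part.} Take $V=\phi$, which is continuous, nonnegative in $\R^n$ and a supersolution, and $U=u$. If $u\equiv0$ in $\Omega$, then $u\equiv0$ in $\R^n$ because $u=0$ outside $\Omega$, and $\alpha_2=0$ works. Otherwise $u(x_0)>0$ for some $x_0\in\Omega$, and the lemma gives $\tau\ge0$ with $\phi=\tau u$ in $\Omega$. Since $\phi>0$ in $\Omega$, we have $\tau>0$, and we set $\alpha_2=1/\tau$. Then $u=\alpha_2\phi$ in $\Omega$, and also in $\R^n\setminus\Omega$ because both functions vanish there. This gives equality everywhere in $\R^n$, not merely a.e., which is why the boundary condition $u=0$ outside $\Omega$ is assumed.

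The main obstacle is only the semicontinuity bookkeeping in the first part, namely $LSC(\Omega)$ versus $LSC(\overline\Omega)$. I expect to handle it by the remark that every use of semicontinuity in the proof of Lemma~\ref{lem-proportionality} takes place in $\Omega$ or on compact subsets of $\Omega$.
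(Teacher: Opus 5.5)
Your proposal is correct. The subsolution half is exactly the paper's argument, and you make explicit a point the paper leaves implicit: $\tau>0$ because $\phi>0$ in $\Omega$, so $\alpha_2=1/\tau$.

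In the supersolution half, the only difference is how you handle the mismatch between $LSC(\Omega)$ and $LSC(\overline\Omega)$.

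\emph{The paper's route.} It replaces $v$ by $V=v$ in $\Omega$ and $V=0$ in $\R^n\setminus\Omega$. This $V$ is lower semicontinuous on $\R^n$, and it remains a supersolution because $V\le v$. But the replacement discards the exterior values of $v$. The paper therefore first splits off the case $v\equiv0$ via Lemma~\ref{lem-smp}. After obtaining $v=\alpha_1\phi$ in $\Omega$, it needs one more application of Lemmas~\ref{lem-difference} and~\ref{lem-smp} to $w=v-\alpha_1\phi$, which is $\ge0$ and vanishes in $\Omega$, to get $v=0$ a.e.\ outside $\Omega$.

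\emph{Your route.} Your lower-semicontinuous envelope alters $v$ only on $\partial\Omega$. Hence the a.e.\ conclusion of Lemma~\ref{lem-proportionality} transfers directly to $v$, the case $\tau=0$ is covered by the lemma itself, and both extra steps disappear. The price is that you use $|\partial\Omega|=0$, which holds for a $C^2$ domain. You should state this explicitly: it is what makes the modification invisible both to the nonlocal integrals in the viscosity inequalities and to the a.e.\ conclusion.

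Two corrections to your bookkeeping.
\begin{enumerate}
\item It is not true that every use of semicontinuity in Lemma~\ref{lem-proportionality} occurs in $\Omega$ or on compact subsets of $\Omega$. Lemma~\ref{lem-small-measure} is applied on $D=\Omega\setminus K$ and needs the minimum over $\overline D$, which reaches $\partial\Omega$.
\item In your alternative, the reason a negative infimum of $q=v-\tau\phi$ over $D$ is attained inside $D$ is not the sign of $q$ outside $\Omega$. The correct reason is that $\liminf q\ge0$ when $\partial\Omega$ is approached from inside, since $v\ge0$ and $\phi\in C(\R^n)$ vanishes on $\partial\Omega$. Together with lower semicontinuity and $q\ge0$ on $K$, this places the minimum in $D$.
\end{enumerate}
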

\begin{proof}
    Write $\lambda_1=\lambda(\Omega)$ and $\phi=\phi(\cdot,\Omega)$.

    For the supersolution $v$, ellipticity gives
    \[
       \M^-v+\lambda_1v\leq0\quad\text{in }\Omega.
    \]
    Hence Lemma~\ref{lem-smp} shows that either
    $v=0$ in $\Omega$ and almost everywhere in $\R^n$, or $v>0$ in $\Omega$.
    The first case is immediate.  In the second case define
    \[
       V(x):=\begin{cases}
          v(x),&x\in\Omega,\\
          0,&x\in\R^n\setminus\Omega.
       \end{cases}
    \]
    Then $V\in LSC(\overline\Omega)\cap L^\infty(\R^n)$,
    $V\leq v$ in $\R^n$, and $V=v$ in $\Omega$.  Thus ellipticity implies
    that $V$ is a viscosity supersolution of
    \[
       IV+\lambda_1V\leq0\quad\text{in }\Omega.
    \]
    Applying Lemma~\ref{lem-proportionality} with $U=\phi$ gives
    $v=V=\alpha_1\phi$ in $\Omega$ for some $\alpha_1>0$.
    Set $w:=v-\alpha_1\phi$.  Then $w\geq0$ in $\R^n$ and $w=0$ in
    $\Omega$.  Lemma~\ref{lem-difference} gives
    \[
       \M^-w+\lambda_1w\leq0\quad\text{in }\Omega,
    \]
    and Lemma~\ref{lem-smp} yields $w=0$ almost everywhere in $\R^n$.

    For the subsolution $u$, the case $u\equiv0$ is immediate.  Otherwise
    $u(x_0)>0$ for some $x_0\in\Omega$, and
    Lemma~\ref{lem-proportionality}, applied with $U=u$ and $V=\phi$,
    gives $u=\alpha_2\phi$ in $\Omega$ and almost everywhere in $\R^n$ for
    some $\alpha_2>0$.  Since both functions vanish in
    $\R^n\setminus\Omega$, the equality holds in all of $\R^n$.
\end{proof}

\begin{prop}[Strict domain monotonicity]\label{lem-strict-domain-monotonicity}
    Let $D_1\subset D_2$ be bounded $C^2$ domains such that
    $|D_2\setminus D_1|>0$.  Then
    \[
       \lambda(D_2)<\lambda(D_1).
    \]
\end{prop}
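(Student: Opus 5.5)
The plan is to show first the weak inequality $\lambda(D_2)\le\lambda(D_1)$ and then to rule out equality with the rigidity statement, Proposition~\ref{prop}.

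\emph{Weak monotonicity.} Let $\phi_2=\phi(\cdot,D_2)$. It is continuous on $\R^n$, positive in $D_2$ and nonnegative everywhere. Since $D_1\subset D_2$, we have $\phi_2>0$ in $D_1$. Moreover $\phi_2$ solves $I\phi_2+\lambda(D_2)\phi_2=0$ in $D_1$, because test functions at points of $D_1$ are test functions at points of $D_2$ and the patching is identical. So $\phi_2$ is admissible in the definition of $\lambda(D_1)$ with $\eta=\lambda(D_2)$, which gives $\lambda(D_2)\le\lambda(D_1)$.

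\emph{Strictness.} Suppose $\lambda(D_1)=\lambda(D_2)=:\lambda_1$. Apply the first part of Proposition~\ref{prop} in the domain $D_1$ to the supersolution $v=\phi_2$ of $Iv+\lambda(D_1)v=0$ in $D_1$, noting $v\ge0$ in $\R^n$. This yields $\alpha_1\ge0$ with $\phi_2=\alpha_1\phi(\cdot,D_1)$ in $D_1$ and almost everywhere in $\R^n$. Since $\phi_2>0$ in $D_1$, we get $\alpha_1>0$. Because $\phi(\cdot,D_1)$ vanishes outside $D_1$, it follows that $\phi_2=0$ almost everywhere in $D_2\setminus D_1$. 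But $\phi_2>0$ in $D_2$ and $|D_2\setminus D_1|>0$, which is a contradiction. Hence $\lambda(D_2)<\lambda(D_1)$.

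\emph{Main point to check.} The delicate step is that Proposition~\ref{prop}, stated for $\Omega$, applies to a general bounded $C^2$ domain $D_1$. Its proof uses only Lemmas~\ref{lem-small-measure}, \ref{lem-smp}, \ref{lem-difference}, \ref{lem-proportionality} and Proposition~\ref{prop21}, all of which hold for any bounded $C^2$ domain. I would also confirm two regularity requirements: $\phi_2\in LSC(D_1)\cap L^\infty(\R^n)$, which is immediate from continuity, and that the restriction of the viscosity supersolution property from $D_2$ to $D_1$ is legitimate, which holds since every neighborhood $V\Subset D_1$ is also $\Subset D_2$.
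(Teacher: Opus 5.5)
Your proposal is correct and follows the paper's proof essentially step for step. You obtain weak monotonicity by using $\phi(\cdot,D_2)$ as an admissible supersolution in the definition of $\lambda(D_1)$, and you rule out equality via the first assertion of Proposition~\ref{prop} in $D_1$, which forces $\phi(\cdot,D_2)=0$ almost everywhere on $D_2\setminus D_1$ and so contradicts $|D_2\setminus D_1|>0$.
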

\begin{proof}
    Let $\phi_2:=\phi(\cdot,D_2)$.  Since $\phi_2>0$ in $D_1$,
    $\phi_2\geq0$ in $\R^n\setminus D_1$, and
    \[
       I\phi_2+\lambda(D_2)\phi_2=0
       \quad\text{in }D_1,
    \]
    the definition of the positive principal eigenvalue in $D_1$ gives
    $\lambda(D_2)\leq\lambda(D_1)$.  Suppose that equality holds.
    Applying Proposition~\ref{prop}, with $D_1$ in place of $\Omega$, to
    $\phi_2$ yields
    \[
       \phi_2=\alpha\phi(\cdot,D_1)
       \quad\text{in }D_1\text{ and almost everywhere in }\R^n
    \]
    for some $\alpha>0$.  Since $\phi(\cdot,D_1)=0$ in
    $\R^n\setminus D_1$, this implies that $\phi_2=0$ almost everywhere
    in $D_2\setminus D_1$, contradicting $\phi_2>0$ in $D_2$ and
    $|D_2\setminus D_1|>0$.
\end{proof}

\subsection{Regularity and domain perturbation}
We shall use the following global H\"older estimate.
\begin{lem}[Global H\"older estimate]\label{lem3}
    Let $\Omega$ be a bounded $C^2$ domain in $\R^n$ and let
    $f\in L^\infty(\Omega)$.  Then there exist $C_2>0$ and
    $\beta\in(0,1)$, depending only on $n,s,\lambda,\Lambda$, and $\Omega$, such that every viscosity solution
    $u\in C(\R^n)$ of
    \[
       Iu=f\quad\text{in }\Omega,
       \qquad
       u=0\quad\text{in }\R^n\setminus\Omega,
    \]
    satisfies
    \[
       \|u\|_{C^\beta(\R^n)}
       \leq C_2\bigl(\|u\|_{L^\infty(\R^n)}+\|f\|_{L^\infty(\Omega)}\bigr).
    \]
\end{lem}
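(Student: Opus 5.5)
This estimate is essentially a known result, and I would assemble it from the literature rather than prove it from scratch. The plan has three steps: interior regularity, a boundary barrier, and a patching argument.

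\emph{Step 1: interior estimate.} Since $u$ solves $Iu=f$, ellipticity gives
\[
\M^-u\le |f| \quad\text{and}\quad \M^+u\ge -|f| \quad\text{in }\Omega
\]
in the viscosity sense, with $u$ bounded on $\R^n$. The interior H\"older estimate of Caffarelli--Silvestre \cite{caffarelli2009regularity} then applies. After rescaling a ball $B_r(x_0)\subset\Omega$ to the unit ball, it yields
\[
[u]_{C^{\beta}(B_{r/2}(x_0))}\le C r^{-\beta}\bigl(\|u\|_{L^\infty(\R^n)}+r^{2s}\|f\|_{L^\infty}\bigr),
\]
with $C,\beta$ depending only on $n,s,\lambda,\Lambda$. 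Here $\beta$ may be chosen small.

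\emph{Step 2: boundary decay.} This is the main obstacle. I want to show that
\[
|u(x)|\le C\bigl(\|u\|_\infty+\|f\|_\infty\bigr)\,d_\Omega(x)^{\beta}
\]
near $\partial\Omega$. Because $\Omega$ is $C^2$, it satisfies a uniform exterior ball condition with some radius $\rho$. At each $z\in\partial\Omega$ take the exterior ball $B_\rho(y_z)$ touching $\partial\Omega$ at $z$. I would use the standard barrier $\psi(x)=(|x-y_z|-\rho)_+^{\beta}$, suitably cut off far away, for small $\beta$. A direct computation, as in Ros-Oton--Serra or in the barriers of \cite{quaas2020principal}, shows that
\[
\M^+\psi\le -c\,(|x-y_z|-\rho)^{\beta-2s}\le -c_0<0
\]
in a neighborhood $\Omega\cap B_{\rho}(z)$. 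I would then take the supersolution $M\psi+\text{const}\cdot\chi$, with $M=C(\|u\|_\infty+\|f\|_\infty)$, adding a large constant outside the neighborhood to dominate $u$ there. The comparison principle for $\M^+$ in small domains follows from Lemma~\ref{lem-small-measure} applied to the difference via Lemma~\ref{lem-difference}. It gives $\pm u\le M\psi$ in that neighborhood, hence the decay $|u(x)|\le M|x-z|^\beta$ along the boundary.

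\emph{Step 3: patching.} For $x,y\in\Omega$, let $d=d_\Omega(x)\ge d_\Omega(y)$. If $|x-y|<d/2$, apply Step~1 in $B_d(x)$ and combine it with the bound $\|u\|_{L^\infty(B_d(x))}\lesssim d^\beta$ from Step~2. To do this, I apply the interior estimate to $u-u(x)$, controlling the tail by the global decay; the tail term contributes $d^{2s}\int |u(y)-u(x)|\,|y|^{-n-2s}\,dy$, which is handled by splitting dyadically. This gives $|u(x)-u(y)|\le C M|x-y|^\beta$. If $|x-y|\ge d/2$, both points lie within distance $3|x-y|$ of a common boundary point, and Step~2 gives the bound directly. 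Points outside $\Omega$ are handled trivially since $u=0$ there.
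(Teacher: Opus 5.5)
The paper does not prove Lemma~\ref{lem3}. It simply quotes \cite[Theorem~3.6]{quaas2020principal} (stated there for $s\in(1/2,1)$) and \cite[Corollary~3.3.8]{fernandezreal2024integro}. Your outline reconstructs the standard argument behind such global estimates: interior Caffarelli--Silvestre regularity, a $d^\gamma$-type boundary barrier, and a scaling/patching step. It is correct in substance.

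Compared with the bare citation, your route shows three things:
\begin{itemize}
\item only zero-drift ingredients are used, so the restriction $s>1/2$ in the first reference is irrelevant;
\item $\Omega$ enters only through the barrier constants and $\operatorname{diam}\Omega$;
\item only the Pucci inequalities $\M^+u\geq-\|f\|_{L^\infty(\Omega)}$ and $\M^-u\leq\|f\|_{L^\infty(\Omega)}$ are needed, which is the natural reading of the equation when $f$ is merely bounded.
\end{itemize}
The citation, of course, buys brevity.

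Two steps need more care.

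\emph{Step~3 needs the tail form of the interior estimate.} You need the version with $\|u\|_{L^\infty(B_1)}+\|u\|_{L^1_{\omega_s}(\R^n)}$ on the right, as in Lemma~\ref{lem28}. The version with $\|u\|_{L^\infty(\R^n)}$ that you wrote in Step~1 does not suffice, because $\|u-u(x)\|_{L^\infty(\R^n)}$ is not of order $d^\beta$. Equivalently, cut $u-u(x)$ off outside $B_d(x)$ and move the tail, of size at most $CMd^{\beta-2s}$, to the right-hand side. Either way this requires $\beta<2s$ and $\beta\leq\alpha$, where $\alpha$ is the interior exponent.

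\emph{In Step~2, adding a constant outside the neighborhood pushes $\M^+$ the wrong way.} Raising a function outside the comparison set \emph{raises} $\M^+$ at points inside it. If a constant is added on a set adjacent to the comparison region, the increase grows like $\dist(x,\cdot)^{-2s}$. This destroys the supersolution inequality near the part of the neighborhood's boundary lying inside $\Omega$. So $\chi$ must be a smooth cutoff vanishing near $z$; its $\M^+$ is bounded and can be absorbed by enlarging $M$.

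A simpler fix is to use the global barrier $\xi$ that the paper fixes in Section~3. There $\M^+\xi\leq-c_2d_\Omega^{\gamma-2s}\leq-c_2\delta_1^{\gamma-2s}$ in $\Omega_{\delta_1}$, since $\gamma<2s$. Moreover, $\xi$ is bounded below by a positive constant on $\Omega\setminus\Omega_{\delta_1}$. Compare $\pm u$ with $M\xi$, where $M=C(\|u\|_{L^\infty(\R^n)}+\|f\|_{L^\infty(\Omega)})$, using Lemmas~\ref{lem-difference} and~\ref{lem-small-measure}; for $\mu=0$ the latter holds on every bounded open set. This gives $|u|\leq Md_\Omega^\gamma$ in $\Omega_{\delta_1}$ directly, with no family of exterior-ball barriers and no added constant.
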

This was proved by Quaas--Salort--Xia
\cite[Theorem~3.6]{quaas2020principal} for $s\in(1/2,1)$ and by
Fern\'andez-Real--Ros-Oton
\cite[Corollary~3.3.8]{fernandezreal2024integro}.

We shall also use the continuity of the principal eigenvalue under smooth
outer perturbations of the domain.
\begin{lem}[Outer continuity of the principal eigenvalue]\label{lem-domain-continuity}
    Let $D$ be a bounded $C^2$ domain.  There exist bounded $C^2$ domains
    $D_m$ such that
    \[
    \overline D\Subset D_{m+1}\Subset D_m,
    \qquad
    \bigcap_{m=1}^\infty D_m=\overline D,
    \qquad
    \lambda(D_m)\longrightarrow\lambda(D).
    \]
\end{lem}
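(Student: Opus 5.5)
The plan is to build the approximating domains explicitly as outer parallel sets of $D$, and to get convergence of the eigenvalues from monotonicity on one side and a stability (limit) argument on the other.

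\textbf{Construction of the domains.} Since $\partial D$ is compact and $C^2$, it has a tubular neighbourhood of some width $\theta_0>0$. In this neighbourhood the signed distance to $\partial D$ is $C^2$. For $0<\theta<\theta_0$ set
\[
D^\theta:=\{x\in\R^n:\dist(x,D)<\theta\}.
\]
Each $D^\theta$ is a bounded $C^2$ domain; it is connected because $D$ is. Choose $\theta_m\downarrow0$ and put $D_m:=D^{\theta_m}$. Then $\overline D\Subset D_{m+1}\Subset D_m$ and $\bigcap_m D_m=\overline D$.

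\textbf{The easy inequality.} By Proposition~\ref{lem-strict-domain-monotonicity}, the sequence $\lambda(D_m)$ is strictly increasing and bounded above by $\lambda(D)$. Hence $\lambda(D_m)\uparrow\lambda_*\le\lambda(D)$, and it remains to prove $\lambda_*\ge\lambda(D)$.

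\textbf{The reverse inequality by compactness.} Let $\phi_m:=\phi(\cdot,D_m)$. These solve $I\phi_m=-\lambda(D_m)\phi_m$ in $D_m$ and vanish outside $D_m$. All $D_m$ lie in the fixed domain $D_1$, and the right-hand sides are bounded uniformly. Applying Lemma~\ref{lem3} in $D_m$ would give a uniform $C^\beta$ bound, provided the constant $C_2$ is uniform over $m$. This uniformity is the main obstacle, since $C_2$ depends on the domain. I would justify it from the fact that the $D_m$ have uniformly bounded $C^2$ character: they satisfy a uniform exterior ball condition of radius about $\theta_0/2$, and the boundary barriers behind Lemma~\ref{lem3} depend only on that radius.

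With the uniform bound in hand, Arzel\`a--Ascoli gives a subsequence with $\phi_m\to\psi$ uniformly on $\R^n$. The limit satisfies $\|\psi\|_\infty=1$, $\psi\ge0$, and $\psi=0$ outside $\bigcap_m\overline{D_m}=\overline D$. By stability of viscosity solutions under uniform convergence (the tails converge by dominated convergence), $\psi$ solves $I\psi+\lambda_*\psi=0$ in $D$.

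\textbf{Identifying the limit and concluding.} First, $\psi=0$ on $\partial D$. This follows from the uniform H\"older bound together with $\phi_m=0$ on $\partial D_m$, which lies at distance $\theta_m$ from $\partial D$. So $\psi\in C(\overline D)$ is nonnegative and nontrivial. By Lemma~\ref{lem-smp} applied to $\M^-\psi\le-\lambda_*\psi$, we get $\psi>0$ in $D$. Hence $\psi$ is an admissible function in the definition of $\lambda(D)$ with $\eta=\lambda_*$.

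That membership alone only yields $\lambda_*\le\lambda(D)$, which we already know. For the other direction I would use Lemma~\ref{lem-proportionality} with $U=\psi$, $V=\phi(\cdot,D)$, and $\eta=\lambda_*$. If $\lambda_*<\lambda(D)$, then $\phi(\cdot,D)$ satisfies
\[
I\phi(\cdot,D)+\lambda_*\phi(\cdot,D)=-(\lambda(D)-\lambda_*)\phi(\cdot,D)\le0
\quad\text{in }D,
\]
so it is a supersolution. The lemma then gives $\phi(\cdot,D)=\tau\psi$ in $D$ for some $\tau>0$. Substituting into both equations gives
\[
-\lambda(D)\tau\psi=-\lambda_*\tau\psi,
\]
and uniqueness of the continuous forcing forces $\lambda(D)=\lambda_*$, a contradiction. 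Therefore $\lambda_*=\lambda(D)$.
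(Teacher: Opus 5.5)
Your proposal is correct and follows essentially the same route as the paper. Both build outer parallel sets via the signed distance, use monotonicity to get $\lambda(D_m)\uparrow\lambda_*\le\lambda(D)$, take a uniform H\"older limit of the normalized eigenfunctions, pass to the limit by stability, and conclude by uniqueness of the continuous forcing. The only difference is in identifying the limit: the paper treats $\psi$ as a subsolution at level $\lambda(D)$ and invokes the second part of Proposition~\ref{prop}, whereas you apply Lemma~\ref{lem-proportionality} directly with $\phi(\cdot,D)$ as a supersolution at level $\lambda_*$, which amounts to the same argument.
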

\begin{proof}
    Let $\rho_D$ be the signed distance to $\partial D$, chosen negative in
    $D$.  Since $\rho_D$ is $C^2$ in a tubular neighborhood of $\partial D$,
    choose a strictly decreasing sequence $\varepsilon_m\downarrow0$ sufficiently
    small and set
    \[
       D_m:=\{x\in\R^n:\rho_D(x)<\varepsilon_m\}.
    \]
    Then $\overline D\Subset D_{m+1}\Subset D_m$,
    $\bigcap_mD_m=\overline D$, and the domains $D_m$ have a uniform bounded
    $C^2$ character.

    Domain monotonicity gives $\lambda(D_m)\uparrow\ell\leq\lambda(D)$.
    Let $\phi_m$ be the corresponding nonnegative eigenfunctions normalized
    by $\|\phi_m\|_{L^\infty(\R^n)}=1$.  Lemma~\ref{lem3}, the uniform
    $C^2$ geometry, and the uniform bound on $\lambda(D_m)$ give a common
    H\"older modulus.  Since all $\phi_m$ vanish outside $D_1$, after passing
    to a subsequence,
    \[
       \phi_m\longrightarrow\phi\quad\text{uniformly in }\R^n.
    \]
    The domain exhaustion gives $\phi=0$ in $\R^n\setminus D$ and
    $\|\phi\|_{L^\infty(\R^n)}=1$.  By the stability of viscosity solutions
    \cite[Corollary~4.7]{caffarelli2009regularity},
    \[
       I\phi+\ell\phi=0\quad\text{in }D.
    \]
    Since $\ell\leq\lambda(D)$, $\phi$ is a nonnegative subsolution of the
    principal eigenvalue equation in $D$.  Proposition~\ref{prop} and the
    normalization give $\phi=\phi(\cdot,D)$.  Applying Lemma~\ref{lem-difference} in the two orders to
    \[
       I\phi=-\ell\phi
       \qquad\text{and}\qquad
       I\phi=-\lambda(D)\phi
       \quad\text{in }D,
    \]
    we obtain
    \[
       (\ell-\lambda(D))\phi=0\quad\text{in }D.
    \]
    Since $\phi>0$ in $D$, it follows that $\ell=\lambda(D)$.
\end{proof}

\section{Existence and nonexistence of positive solutions}
In this section, we prove Theorem~\ref{thm11}.  We first construct ordered
sub- and supersolutions and then close the argument by Perron's method and the
comparison principle.
\subsection{Sub- and supersolutions}
We first record the subsolution supplied by the principal eigenfunction.
\begin{lem}[Positive subsolution]\label{lem-positive-subsolution}
    If $\mu>\lambda(\Omega)$, then
    \[
       \underline w=c_3\phi(\cdot,\Omega),\qquad
       c_3=\left(
       \frac{\mu-\lambda(\Omega)}{\|k\|_{L^\infty(\Omega)}}
       \right)^{1/(p-1)},
    \]
    is a positive viscosity subsolution of \eqref{e1}.
\end{lem}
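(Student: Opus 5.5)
The plan is to check the subsolution inequality directly, using the positive $1$-homogeneity of $I$ together with the eigenvalue equation of Proposition~\ref{prop21}. Write $\phi=\phi(\cdot,\Omega)$ and $\lambda_1=\lambda(\Omega)$. Since $\phi\in C(\R^n)$, $\phi>0$ in $\Omega$, and $\phi=0$ in $\R^n\setminus\Omega$, the function $\underline w=c_3\phi$ is bounded, continuous, nonnegative in $\Omega$, and satisfies the ordered exterior condition $\underline w\leq0$ in $\R^n\setminus\Omega$. It also has $c_3>0$, because $\mu>\lambda_1$ and $\|k\|_{L^\infty(\Omega)}>0$ (as $k\not\equiv0$). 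So only the interior viscosity inequality $I\underline w\geq k\underline w^p-\mu\underline w$ in $\Omega$ remains.

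First I pass homogeneity to the viscosity level. Suppose $\zeta\in C^2(\overline V)$ touches $\underline w$ from above at $x_0$ in $V\Subset\Omega$. Then $\zeta/c_3$ touches $\phi$ from above at $x_0$, and the patched function for $\underline w$ equals $c_3$ times the patched function for $\phi$. Because $I(c_3z)=c_3Iz$ for $c_3>0$ (the inf-sup of linear integrals is positively homogeneous), the subsolution property of $\phi$ gives
\[
Iz(x_0)\geq -\lambda_1 c_3\phi(x_0)=-\lambda_1\underline w(x_0).
\]

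Next I compare the right-hand sides pointwise. It suffices to show $-\lambda_1\underline w\geq k\underline w^p-\mu\underline w$ in $\Omega$, which is equivalent to $(\mu-\lambda_1)\underline w\geq k\underline w^p$. At points where $\underline w>0$ this reduces to $k\,c_3^{p-1}\phi^{p-1}\leq\mu-\lambda_1$. That bound holds because $0\le\phi\le1$ and $k\le\|k\|_{L^\infty(\Omega)}$, so $k\,c_3^{p-1}\phi^{p-1}\leq\|k\|_\infty c_3^{p-1}=\mu-\lambda_1$ by the choice of $c_3$. Combining this with the previous step gives $Iz(x_0)\geq k(x_0)\underline w(x_0)^p-\mu\underline w(x_0)$, which is the definition of a viscosity subsolution of \eqref{e1}. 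Positivity of $\underline w$ in $\Omega$ is inherited from $\phi$.

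None of the steps is a serious obstacle. The only point needing care is that homogeneity is used through the patched test functions rather than classically. This is harmless, since scaling by a positive constant commutes with the patching construction in Definition~\ref{def-viscosity}.
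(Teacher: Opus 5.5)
Your proof is correct and follows the paper's argument. You use the eigenvalue equation and positive homogeneity to get $I\underline w+\mu\underline w\geq(\mu-\lambda(\Omega))\underline w$, then use $0<\phi\leq1$ and the choice of $c_3$ to get $(\mu-\lambda(\Omega))\underline w\geq k\underline w^p$; you also spell out how homogeneity passes through the patched test functions, which the paper leaves implicit.
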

\begin{proof}
    The exterior condition is immediate, and
    \[
       I\underline w+\mu\underline w
       =(\mu-\lambda(\Omega))\underline w
       \geq k\underline w^p
       \quad\text{in }\Omega,
    \]
    where we used $0<\phi(\cdot,\Omega)\leq1$ and the definition of
    $c_3$.
\end{proof}

We next construct a supersolution.  The following auxiliary function allows us
to control the nonlocal operator away from the sanctuary.
\begin{lem}[Auxiliary positive function]\label{lem32}
    Let $\Omega_1$ be a bounded $C^2$ domain, and let $\mu<\lambda(\Omega_1)$.
    Then there exists $\psi\in C(\R^n)\cap L^\infty(\R^n)$ with the following properties:
    \begin{enumerate}
        \item[(I)] $\psi$ is a viscosity supersolution of
        \[
        I\psi+\mu\psi=0\quad\text{in }\Omega_1;
        \]
        \item[(II)] for some constant $C_4>0$, $\psi$ is a viscosity supersolution of
        \[
        I\psi+\mu\psi=C_4\quad\text{in }\R^n;
        \]
        \item[(III)] $\psi$ is strictly positive in $\R^n$; more precisely, $\inf_{\R^n}\psi=:\theta>0$.
    \end{enumerate}
\end{lem}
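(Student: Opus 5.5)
The plan is to glue the principal eigenfunction of a slightly larger domain, lifted by a small constant, to a smooth bounded function that takes over near the boundary of that larger domain and outside it. Gluing is needed because the lifted eigenfunction alone does not satisfy (II): outside the larger domain, but close to its boundary, its nonlocal operator is not bounded above.

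\emph{Step 1 (enlarged domain).} Apply Lemma~\ref{lem-domain-continuity} with $D=\Omega_1$. Since $\lambda(D_m)\to\lambda(\Omega_1)>\mu$, fix $m$ with $\mu<\lambda(D_m)$, and write $D':=D_m$ and $\phi':=\phi(\cdot,D')$. Then $\overline{\Omega_1}\Subset D'$ and $\phi'>0$ on the compact set $\overline{\Omega_1}$, so $c_0:=\min_{\overline{\Omega_1}}\phi'>0$. Since $I$ is invariant under addition of constants, the function $\Phi:=\phi'+\theta'$ satisfies
\[
I\Phi+\mu\Phi=(\mu-\lambda(D'))\phi'+\mu\theta' \quad\text{in }D'.
\]
\emph{Choice of $\theta'$.} Choose $\theta'>0$ so small that $\mu^+\theta'\le\frac12(\lambda(D')-\mu)c_0$. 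Then $I\Phi+\mu\Phi\le0$ on a neighbourhood $U$ of $\overline{\Omega_1}$ on which $\phi'\ge c_0/2$. Moreover, throughout $D'$,
\[
I\Phi+\mu\Phi\le |\mu|(1+\theta')+\lambda(D')^{-}.
\]

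\emph{Step 2 (gluing).} The obstruction is that near $\partial D'$ and outside $D'$, $\Phi$ is not a supersolution with bounded right-hand side. The nonlocal contribution from inside $D'$, where $\phi'\sim d_{D'}^{\,s}$, makes $I\Phi$ large and positive just outside $\partial D'$. To remove this, pick $g\in C^2(\R^n)$, bounded with bounded derivatives, such that:
\begin{itemize}
\item $g\equiv2$ on a neighbourhood of $\overline{\Omega_1}$ contained in $U$;
\item $g\equiv\theta'/2$ outside a compact subset $K$ of $D'$;
\item $\theta'/2\le g\le2$ everywhere.
\end{itemize}
Then $Ig$ is classically defined and bounded on $\R^n$. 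Set
\[
\psi:=\min\{\Phi,g\}\in C(\R^n)\cap L^\infty(\R^n).
\]
Since $\Phi\le1+\theta'<2$, we have $\psi=\Phi$ near $\overline{\Omega_1}$. Since $\Phi\ge\theta'>g$ off $K$, we have $\psi=g$ there. In particular $\inf\psi\ge\theta'/2$, which gives (III).

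\emph{Step 3 (viscosity inequalities).} Let $\zeta$ touch $\psi$ from below at $x_0$ in $V$, and let $z$ be the patched function. There are two cases.
\begin{itemize}
\item If $\psi(x_0)=\Phi(x_0)$, then $x_0\in K\subset D'$ and $\zeta$ also touches $\Phi$ from below. Since $\psi\le\Phi$ outside $V$, the patch $z$ lies below the corresponding patch $z'$ built from $\Phi$, and monotonicity of $I$ gives $Iz(x_0)\le Iz'(x_0)$.
\item If $\psi(x_0)=g(x_0)$, the same comparison applies with $g$ in place of $\Phi$.
\end{itemize}
Taking $C_4:=\max\{|\mu|(1+\theta')+\lambda(D')^-,\ \sup|Ig|+2|\mu|\}$ yields (II). For (I), take $x_0\in\Omega_1$. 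There the first case occurs with $x_0\in U$, so Step~1 gives $Iz(x_0)+\mu\psi(x_0)\le0$.

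The main point requiring care is Step~2, namely arranging that the set where $\Phi$ is active stays compactly inside $D'$. The choice $g=\theta'/2<\theta'\le\Phi$ off $K$ is what forces this.
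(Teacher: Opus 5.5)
Your proposal is correct and follows essentially the same route as the paper: you enlarge $\Omega_1$ via Lemma~\ref{lem-domain-continuity}, take the minimum of the eigenfunction of the larger domain with a smooth function that dominates it near $\overline{\Omega_1}$ and lies below it near the boundary of the larger domain, and transfer the supersolution inequality through whichever branch is active at the contact point. The only difference is cosmetic: you lift by $\theta'$ before taking the minimum and take $g\equiv\theta'/2$ off $K$, so that the eigenfunction branch is active only inside $K$, whereas the paper takes $\min\{\phi(\cdot,\Omega_2),\eta_1\}$ with $\eta_1\in C^\infty_c(\Omega_2)$ and adds the constant afterwards.
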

\begin{proof}
    By Lemma~\ref{lem-domain-continuity}, choose a $C^2$ domain
    $\Omega_2$ such that $\Omega_1\Subset\Omega_2$ and
    $\mu<\lambda(\Omega_2)$.
    Let $\phi(\cdot,\Omega_2):\R^n\to\R$ be the normalized eigenfunction in $\Omega_2$ given by Proposition~\ref{prop21}.
    Choose $\eta_1\in C^\infty_c(\Omega_2)$ that is positive on $\overline{\Omega_1}$ and nonnegative in $\Omega_2$.
    By multiplying $\eta_1$ by a large constant if necessary, we may assume $\eta_1>\phi(\cdot,\Omega_2)$ in $\overline{\Omega_1}$.
    Set
    \[
    \widetilde{\phi}=\min\{\phi(\cdot,\Omega_2),\eta_1\}.
    \]
    The strict inequality and compactness give a neighborhood $U$ of
    $\overline{\Omega_1}$ on which $\eta_1>\phi(\cdot,\Omega_2)$; hence
    $\widetilde\phi=\phi(\cdot,\Omega_2)$ in $U$.  Moreover,
    $\widetilde\phi\leq\phi(\cdot,\Omega_2)$ in $\R^n$.  Consequently, if
    a lower test function touches $\widetilde\phi$ at $x\in\Omega_1$, the
    corresponding patched function is bounded above by the one obtained by
    patching the same test function to $\phi(\cdot,\Omega_2)$.  The
    ellipticity of $I$ and the eigenfunction equation therefore give
    \[
    I\widetilde{\phi}+\mu\widetilde{\phi}
    \leq(\mu-\lambda(\Omega_2))\widetilde{\phi}
    \leq-\varepsilon\quad\text{in }\Omega_1,
    \]
    where
    \[
    \varepsilon=(\lambda(\Omega_2)-\mu)
       \min_{\overline{\Omega_1}}\phi(\cdot,\Omega_2)>0.
    \]
    Since $\eta_1$ has compact support in $\Omega_2$, the function
    $\widetilde{\phi}$ vanishes in a neighborhood of $\partial\Omega_2$
    and extends continuously by zero to $\R^n\setminus\Omega_2$.

    We next justify the global upper bound.  Since
    $\eta_1\in C^\infty_c(\R^n)$, the quantity
    $I\eta_1+\mu\eta_1$ is classically defined and bounded in $\R^n$.
    We next obtain a global bounded-defect estimate.  In $\Omega_1$ the
    preceding inequality already gives
    $I\widetilde\phi+\mu\widetilde\phi\leq0$.  Let
    $x\in\R^n\setminus\Omega_1$ and let a lower test function touch
    $\widetilde\phi$ at $x$.  If
    $\widetilde\phi(x)=\eta_1(x)$, then it also touches $\eta_1$ from
    below, and the patched function for $\widetilde\phi$ is bounded above
    by the corresponding one for $\eta_1$.  Thus
    \[
    I\widetilde\phi(x)+\mu\widetilde\phi(x)
       \leq I\eta_1(x)+\mu\eta_1(x).
    \]
    If $\widetilde\phi(x)<\eta_1(x)$, then $x\in\Omega_2$ and the same
    comparison with $\phi(\cdot,\Omega_2)$ gives
    \[
    I\widetilde\phi(x)+\mu\widetilde\phi(x)
       \leq(\mu-\lambda(\Omega_2))\phi(x,\Omega_2)\leq0.
    \]
    Since
    $I\eta_1+\mu\eta_1$ is bounded in $\R^n$, there exists $C>0$ such that
    \[
    I\widetilde{\phi}+\mu\widetilde{\phi}\leq C
       \quad\text{in }\R^n
    \]
    in the viscosity sense.
    Choose $\theta>0$ so small that $\mu^+\theta\leq\varepsilon$ and set $\psi=\widetilde{\phi}+\theta$.
    Then
    \[
    I\psi+\mu\psi=(I\widetilde{\phi}+\mu\widetilde{\phi})+\mu\theta\leq0
    \quad\text{in }\Omega_1,
    \]
    while the global bounded-defect estimate gives
    \[
    I\psi+\mu\psi=(I\widetilde{\phi}+\mu\widetilde{\phi})+\mu\theta
    \leq C+\mu\theta
    \quad\text{in }\R^n
    \]
    in the viscosity sense.
    Taking $C_4>0$ sufficiently large proves the result.
\end{proof}
We use directly the boundary barrier introduced in
\cite[(2.3), Lemma~3.3]{quaas2020principal}.  Fix $\gamma\in(0,\beta_1)$,
where $\beta_1$ is the constant appearing there.
Then there exists a bounded barrier function
$\xi\in C^2(\Omega)\cap C(\R^n)$, together with
constants $c_2,\delta_1>0$, such that
\[
   \xi>0\quad\text{in }\Omega,\qquad
   \xi=0\quad\text{in }\R^n\setminus\Omega,
\]
\[
   \xi=d_\Omega^\gamma\quad\text{in }\Omega_{\delta_1},
   \qquad
   \M^+\xi(x)\leq-c_2d_\Omega(x)^{\gamma-2s}
   \quad\text{in }\Omega_{\delta_1}.
\]
We now construct a supersolution of \eqref{e1}.
In the case of \eqref{K1}, we set $\Omega_0=\emptyset$ and $\lambda(\Omega_0)=\infty$.
\begin{lem}[Positive Supersolution]\label{lem-positive-supersolution}
    Suppose that either \eqref{K1} or \eqref{K2} holds, and let $\mu<\lambda(\Omega_0)$.
    Then there exists a nonnegative viscosity supersolution $\overline{w}\in C(\R^n)\cap L^\infty(\R^n)$ of
    \[
    I\overline{w}+\mu \overline{w}=k(\overline{w})^p\quad\text{in }\Omega,
    \]
    such that $\overline{w}>0$ in $\Omega$ and $\overline{w}=0$ in $\R^n\setminus\Omega$.
\end{lem}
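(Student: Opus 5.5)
We build $\overline w$ by gluing three pieces: the auxiliary function of Lemma~\ref{lem32} near the sanctuary, a large constant where $k$ is positive, and the boundary barrier $\xi$ near $\partial\Omega$. The supersolution is taken in the form
\[
   \overline w:=M\min\{\psi,\ \xi/\xi_0\}\quad\text{in }\Omega,
   \qquad \overline w=0\quad\text{in }\R^n\setminus\Omega,
\]
or some variant built as a minimum of supersolutions. The key fact is that a minimum of two viscosity supersolutions is again a supersolution: at a touching point the test function touches the active piece, and the patched function for the minimum lies below the patched function for that piece. This is the same observation used in the proof of Lemma~\ref{lem32}.

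First, under \eqref{K2} we fix a $C^2$ domain $\Omega_1$ with $\overline{\Omega_0}\subset\Omega_1$ and $\mu<\lambda(\Omega_1)$; this is possible by Lemma~\ref{lem-domain-continuity}. Under \eqref{K1} we take $\Omega_1=\emptyset$. Let $\psi$ be given by Lemma~\ref{lem32}, so that $\psi\geq\theta>0$. For $M\geq1$, the function $M\psi$ satisfies $I(M\psi)+\mu M\psi\leq0\leq k(M\psi)^p$ in $\Omega_1$. On $\Omega\setminus\Omega_1$, however, we must be careful. The set $\overline{\Omega_0}$ may touch $\partial\Omega$, so $k$ is positive away from $\overline{\Omega_0}$ but may tend to $0$ near $\partial\Omega\cap\overline{\Omega_0}$. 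We therefore choose $\Omega_1$ so that $k\geq k_0>0$ on $\Omega\setminus(\Omega_1\cup\Omega_{\delta})$, where $\delta\leq\delta_1$ is small. On this set,
\[
   I(M\psi)+\mu M\psi\leq MC_4\leq k_0M^p\theta^p\leq k(M\psi)^p
\]
once $M$ is large, because $p>1$.

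Second, we handle the boundary layer $\Omega_\delta$, where $k$ may be small. There we want $\overline w$ to coincide with $A\xi$, which satisfies $I(A\xi)+\mu A\xi\leq A\bigl(-c_2d^{\gamma-2s}+\mu^+d^\gamma\bigr)<0\leq k(A\xi)^p$ for small $d$. We use $I(A\xi)\leq \M^+(A\xi)$, which follows from $I0=0$ and the ellipticity relations. We set $\overline w=\min\{M\psi, A\xi\}$ in $\Omega$ and $0$ outside. Since $\xi=0$ outside $\Omega$, this matches the exterior condition. We choose $A$ so large that $A\xi> M\|\psi\|_\infty$ on $\Omega\setminus\Omega_{\delta/2}$. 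Then $\overline w=M\psi$ there, where the previous estimates apply. On $\Omega_{\delta/2}$ the minimum is a supersolution because each active piece is one. The inequality $M\psi\leq A\xi$ can only hold in the region where $d^\gamma\gtrsim M\theta/A$. Near $\partial\Omega\cap\overline{\Omega_0}$ we may use the $\Omega_1$ estimate, which needs no help from $k$. Near the part of $\partial\Omega$ where $k$ may degenerate outside $\Omega_1$, the active piece is $A\xi$, whose negative sign handles it.

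The main obstacle is ordering the constants consistently. We need $\mu<\lambda(\Omega_1)$, so $\Omega_1$ cannot be too large. We also need $k\geq k_0$ on the part of $\Omega\setminus\Omega_1$ away from the layer where $\xi$ dominates. The order is: first fix $\Omega_1$; this fixes $\psi$, $C_4$ and $\theta$. Next choose $\delta$, so that the barrier inequality holds on $\Omega_\delta$ and $k\geq k_0$ on $\Omega\setminus(\Omega_1\cup\Omega_\delta)$. Compactness gives this, since $k>0$ on the compact set $\overline{\Omega\setminus\Omega_1}\setminus\Omega_\delta$, which stays away from $\overline{\Omega_0}$. Then choose $M$, and finally $A$. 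The cutoff at $\partial\Omega$ is unproblematic in the viscosity sense, because the exterior condition only requires $\overline w\geq0$ and continuity follows from $\xi\to0$ at $\partial\Omega$.
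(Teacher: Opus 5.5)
Your architecture matches the paper's: $\Omega_1\supset\overline{\Omega_0}$ from Lemma~\ref{lem-domain-continuity}, $\psi$ from Lemma~\ref{lem32}, a positive lower bound for $k$ on a compact set avoiding $\Omega_1$ and a boundary layer, a multiple of $\xi$, and $\overline w=\min\{M\psi,A\xi\}$ justified by the active-branch argument. The gap is in the step you yourself flag as the main obstacle. The order ``fix $\delta$, take $k_0$ on $\Omega\setminus(\Omega_1\cup\Omega_\delta)$, choose $M$, then take $A$ large'' does not close. The active-branch argument needs $M\psi$ to be a supersolution wherever it is the active branch, and you have this only in $\Omega_1\cup\{d_\Omega\geq\delta\}$. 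Since $\psi\geq\theta$, the branch $M\psi$ can be active wherever $A d_\Omega^\gamma\geq M\theta$. Your requirement $A\xi>M\|\psi\|_\infty$ off $\Omega_{\delta/2}$ gives $A(\delta/2)^\gamma\geq M\|\psi\|_\infty$, hence $(M\theta/A)^{1/\gamma}\leq\frac{\delta}{2}\,(\theta/\|\psi\|_\infty)^{1/\gamma}<\delta$, and enlarging $A$ only moves this threshold closer to $\partial\Omega$. Consider the set $\{(M\theta/A)^{1/\gamma}\leq d_\Omega<\delta\}\setminus\Omega_1$. It contains $(\Omega_\delta\setminus\Omega_{\delta/2})\setminus\Omega_1$, where you claim ``the previous estimates apply''. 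On this set $M\psi$ may be active while $k$ is arbitrarily small, so the inequality $I(M\psi)+\mu M\psi\leq k(M\psi)^p$ that your argument needs there is unavailable. Your sentence that near the degenerate part of $\partial\Omega$ ``the active piece is $A\xi$'' is exactly what these constants fail to guarantee.

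Enlarging $M$ does not help, because where the branches switch depends only on the ratio $A/M$. Suppose the barrier inequality is used in $\Omega_{\delta_b}$ and the bound $k\geq k_0$ on $\{d_\Omega\geq\delta_k\}\setminus\Omega_1$. Then you need $A\delta_b^\gamma\geq M\|\psi\|_\infty$ and $A\delta_k^\gamma\leq M\theta$ simultaneously, i.e.\ $(\delta_b/\delta_k)^\gamma\geq\|\psi\|_\infty/\theta$. This fails for $\delta_b\leq\delta_k=\delta$, since in general $\|\psi\|_\infty>\theta$; for the $\psi$ of Lemma~\ref{lem32} the ratio is large when $\theta$ is small.

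The missing idea is to separate the two layers by a factor governed by $\|\psi\|_\infty/\theta$, and to tie $A$ to $M$ instead of choosing it freely. The paper picks $\delta$ with $I\xi+\mu\xi\leq0$ in $\Omega_{\sqrt\delta}$ and $\delta^{\gamma/2}\leq\min\{m_\xi,\theta/\|\psi\|_\infty\}$. It defines $m_\delta$ from $\inf k$ on $\overline{\Omega\setminus\Omega_\delta}\setminus\Omega_1$, sets $M_\delta=(C_4/(\theta^pm_\delta))^{1/(p-1)}$, and only then sets $A:=M_\delta\theta\delta^{-\gamma}$. This yields $A\xi<M_\delta\theta\leq M_\delta\psi$ in $\Omega_\delta$ and $A\xi\geq M_\delta\|\psi\|_\infty\geq M_\delta\psi$ in $\Omega\setminus\Omega_{\sqrt\delta}$. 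So each branch is active only where it is already a supersolution, and the dependence of $M_\delta$ on $\delta$ creates no circularity.

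There are also two minor slips. First, the compact set on which $k$ is bounded below must be $\{x\in\Omega:d_\Omega(x)\geq\delta\}\setminus\Omega_1$. Your set $\overline{\Omega\setminus\Omega_1}\setminus\Omega_\delta$ contains $\partial\Omega\setminus\Omega_1$, where $k$ may vanish. Second, under \eqref{K1}, Lemma~\ref{lem32} is not stated for $\Omega_1=\emptyset$, although a positive constant $\psi$ suffices there.
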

\begin{proof}
    If \eqref{K2} holds, Lemma~\ref{lem-domain-continuity} allows us to
    choose a bounded $C^2$ domain $\Omega_1$ such that
    \[
    \overline{\Omega_0}\Subset\Omega_1,
    \qquad \mu<\lambda(\Omega_1).
    \]
    We do not require $\Omega_1\subset\Omega$; when $\overline{\Omega_0}$
    meets $\partial\Omega$, the domain $\Omega_1$ simply extends across
    $\partial\Omega$.  When \eqref{K1} holds, take a ball $\Omega_1\Subset\Omega$
    with $|\Omega_1|$ small enough for Lemma~\ref{lem-small-measure} with the present
    value of $\mu$. Then necessarily $\mu<\lambda(\Omega_1)$. Indeed,
    if $\mu\geq\lambda(\Omega_1)$ and
    $\phi_1=\phi(\cdot,\Omega_1)$, ellipticity gives
    \[
       \M^-(-\phi_1)+\mu(-\phi_1)
       \leq (\mu-\lambda(\Omega_1))(-\phi_1)\leq0
       \quad\text{in }\Omega_1,
    \]
    while $-\phi_1=0$ outside $\Omega_1$, contradicting
    Lemma~\ref{lem-small-measure}.
    Let $\psi$ be as in Lemma~\ref{lem32}, and write
    \[
    \theta:=\inf_{\R^n}\psi>0.
    \]
    Let $\xi$ be the barrier function fixed above, and let
    $\gamma$, $c_2$, and $\delta_1$ denote the associated constants.
    Decreasing $\delta_1$ if necessary, we may assume
    $\Omega\setminus\Omega_{\delta_1}\neq\emptyset$; the barrier estimate
    remains valid in the smaller boundary layer.  Since $\xi$ is positive in
    $\Omega$, set
    \[
       m_\xi:=\min_{\overline{\Omega\setminus\Omega_{\delta_1}}}\xi>0.
    \]
    Choose $\delta\in(0,1)$ sufficiently small that
    \[
       \sqrt\delta<\delta_1,\qquad
       I\xi+\mu\xi\leq0\quad\text{in }\Omega_{\sqrt\delta},
    \]
    and
    \[
       \delta^{\gamma/2}\leq
       \min\left\{m_\xi,
       \frac{\theta}{\|\psi\|_{L^\infty(\R^n)}}\right\}.
    \]
    The differential inequality follows from ellipticity and
    \cite[Lemma~3.3(b)]{quaas2020principal}: in
    $\Omega_{\sqrt\delta}$ one has $\xi=d_\Omega^\gamma$ and hence
    \[
       I\xi+\mu\xi
       \leq d_\Omega^{\gamma-2s}
       \bigl(-c_2+\mu^+d_\Omega^{2s}\bigr)\leq0
    \]
    after decreasing $\delta$ if necessary.

    Set $\Omega^\delta:=\Omega\setminus\Omega_\delta$ and
    \[
    m_\delta:=
    \begin{cases}
    1,
    &\text{if }\Omega^\delta\subset\Omega_1,\\[1mm]
    \displaystyle\min\left\{1,
      \inf_{\overline{\Omega^\delta}\setminus\Omega_1}k\right\},
    &\text{if }\Omega^\delta\setminus\Omega_1\neq\emptyset.
    \end{cases}
    \]
    In the second case, $\overline{\Omega^\delta}\setminus\Omega_1$ is compact and,
    under \eqref{K2}, disjoint from $\overline{\Omega_0}$; hence $m_\delta>0$.
    The same conclusion follows directly from \eqref{K1}.
    Set
    \[
    M_{\delta}:=\left(\frac{C_4}{\theta^p m_\delta}\right)^{1/(p-1)}
    \]
    where $C_4$ and $\psi$ are as in Lemma~\ref{lem32}.
    If $\Omega^\delta\setminus\Omega_1\neq\emptyset$, then
    \[
    I(M_\delta\psi)+\mu M_\delta\psi
    \leq C_4M_\delta
    =m_\delta(M_\delta\theta)^p
    \leq k(M_\delta\psi)^p
    \quad\text{in }\Omega^\delta\setminus\Omega_1.
    \]
    Moreover,
    \[
    I(M_\delta\psi)+\mu M_\delta\psi\leq0\leq k(M_\delta\psi)^p
    \quad\text{in }\Omega^\delta\cap\Omega_1.
    \]
    If $\Omega^\delta\subset\Omega_1$, the latter inequality holds throughout
    $\Omega^\delta$.  Thus, in either case, $M_\delta\psi$ is a viscosity
    supersolution in $\Omega^\delta$.

    Set
    \[
    L:=\frac{M_\delta\theta}{\delta^\gamma},
    \qquad
    \rho:=L\xi.
    \]
    By the choice of $\delta$ and the positive homogeneity of $I$,
    \[
    I\rho+\mu\rho\leq0\leq k\rho^p
       \quad\text{in }\Omega_{\sqrt\delta}.
    \]
    If $x\in\Omega_\delta$, then $\xi(x)=d_\Omega(x)^\gamma$ and
    \[
    \rho(x)\leq L\delta^\gamma=M_\delta\theta
       \leq M_\delta\psi(x).
    \]
    If $x\in\Omega\setminus\Omega_{\sqrt\delta}$, then
    $\xi(x)\geq\delta^{\gamma/2}$.  Indeed, if
    $d_\Omega(x)<\delta_1$, then
    $\xi(x)=d_\Omega(x)^\gamma\geq(\sqrt\delta)^\gamma$; if
    $d_\Omega(x)\geq\delta_1$, then
    $\xi(x)\geq m_\xi\geq\delta^{\gamma/2}$ by the choice of $\delta$.
    Hence
    \[
    \rho(x)\geq L\delta^{\gamma/2}
       =M_\delta\theta\delta^{-\gamma/2}
       \geq M_\delta\|\psi\|_{L^\infty(\R^n)}
       \geq M_\delta\psi(x).
    \]

    Define
    \[
    \overline w:=\min\{M_\delta\psi,\rho\}.
    \]
    We verify the pasting property explicitly.  In $\Omega_\delta$ the
    minimum equals $\rho$, which is a supersolution there.  In
    $\Omega\setminus\Omega_{\sqrt\delta}$ it equals $M_\delta\psi$, which
    is a supersolution there.  In the overlap
    $\Omega_{\sqrt\delta}\cap\Omega^\delta$, both functions are
    supersolutions.  If a lower test function touches $\overline w$ at a
    point of the overlap, choose an active branch, denoted by $w_*$.  Since
    $\overline w\leq w_*$ in $\R^n$ and
    $\overline w(x)=w_*(x)$, the patched function for $\overline w$ is
    bounded above by that for $w_*$.  Ellipticity therefore transfers the
    supersolution inequality from $w_*$ to $\overline w$.

    Finally, both $\psi$ and $\xi$ are positive in $\Omega$, while
    $\xi=0$ in $\R^n\setminus\Omega$.  Hence $\overline w>0$ in $\Omega$ and
    $\overline w=0$ in $\R^n\setminus\Omega$, as required.
\end{proof}
\subsection{Perron construction}
We record the Perron step in the precise form needed below.  Mou's general
Perron theorem \cite[Theorem~3.2]{mou2017perron} is stated under a
properness assumption in the zeroth-order variable, which need not hold for
the logistic term.  The required modification is technical rather than
structural, so the complete envelope and bump argument is deferred to
Appendix~\ref{app-perron}.
\begin{lem}[Perron construction]\label{lem-perron}
    Let $\underline w,\overline w\in C(\R^n)\cap L^\infty(\R^n)$ be,
    respectively, a positive viscosity subsolution and a positive viscosity
    supersolution of \eqref{e1}.  Assume
    $\underline w\leq\overline w$ in $\R^n$ and that both vanish in
    $\R^n\setminus\Omega$.  Define
    \[
       \mathcal F:=\bigl\{z\in USC(\R^n)\cap L^\infty(\R^n):
       \underline w\leq z\leq\overline w\text{ in }\R^n,
       \ z\text{ is a viscosity subsolution of }\eqref{e1}\bigr\}
    \]
    and
    \[
       W(x):=\sup_{z\in\mathcal F}z(x).
    \]
    Then $W$ is a continuous viscosity solution of \eqref{e1}.
\end{lem}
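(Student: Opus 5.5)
The plan follows the classical Perron scheme, with the details adapted to the nonlocal viscosity setting. Write $F(x,r):=k(x)(r^+)^p-\mu r$, so that \eqref{e1} reads $Iw=F(x,w)$. Since $\underline w\in\mathcal F$, the family is nonempty and $\underline w\le W\le\overline w$. Let $W^*$ and $W_*$ denote the upper and lower semicontinuous envelopes of $W$. Because $\underline w$ and $\overline w$ are continuous and vanish outside $\Omega$, both envelopes lie between $\underline w$ and $\overline w$. Hence $W^*=W_*=0$ in $\R^n\setminus\Omega$, and $W^*,W_*$ are continuous up to $\partial\Omega$ with value $0$ there.

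\textbf{Step 1: $W^*$ is a subsolution.} Fix $x_0\in\Omega$ and a test function $\zeta$ touching $W^*$ from above at $x_0$ in $V$; we may assume the touching is strict. Choose $z_j\in\mathcal F$ and points $x_j\to x_0$ with $z_j(x_j)\to W^*(x_0)$. The perturbed test functions $\zeta+c_j$ then touch $z_j$ from above at points $y_j\to x_0$. Apply the subsolution inequality for $z_j$ at $y_j$ and pass to the limit. The integral term converges by Fatou/dominated convergence, using three facts: the tails of the patched functions are bounded by $\overline w$; the part of the integral inside $V$ is controlled by the $C^2$ bound of $\zeta$; and $\limsup z_j\le W^*$ pointwise. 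This yields $Iz(x_0)\ge F(x_0,W^*(x_0))$, where continuity of $F$ handles the zeroth-order term. This is the standard stability argument of \cite[Corollary~4.7]{caffarelli2009regularity}. Properness is not needed here, because $F$ is simply continuous. Consequently $W^*\in\mathcal F$, so $W^*\le W$ and hence $W=W^*$ is USC.

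\textbf{Step 2: $W_*$ is a supersolution (bump construction).} Suppose instead that some $\zeta$ touches $W_*$ from below at $x_0\in\Omega$ while the patched function $z$ satisfies $Iz(x_0)<F(x_0,W_*(x_0))-2\eta$. We first check that $W_*(x_0)<\overline w(x_0)$. If equality held, $\zeta$ would also touch $\overline w$ from below at $x_0$. Since $W_*\le\overline w$, the patched function for $\overline w$ dominates $z$, so ellipticity would give $Iz(x_0)\ge F(x_0,\overline w(x_0))$, a contradiction.

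Next, by continuity of $F$ and of the integral in $x$ for $C^2$ test functions, the function $\zeta_\varepsilon:=\zeta+\varepsilon-\kappa|x-x_0|^4$ (with small parameters) is a classical strict subsolution in a small ball $B_r(x_0)$. The nonlocal tail contributions are controlled by choosing $\varepsilon\ll\kappa r^4$. Set $\widetilde W:=\max\{W,\zeta_\varepsilon\}$ in $B_r(x_0)$ and $\widetilde W:=W$ outside. This is a subsolution, since maxima of subsolutions are subsolutions, and it satisfies $\widetilde W\le\overline w$ for small parameters, because $W_*<\overline w$ near $x_0$ and $\overline w$ is LSC. Thus $\widetilde W\in\mathcal F$. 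Yet $\widetilde W>W$ at points near $x_0$ where $W$ is close to $W_*(x_0)$, contradicting maximality.

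This step is the main technical obstacle. The difficulty is the bookkeeping showing that raising the function inside $B_r$ does not spoil the subsolution inequality, either at points of $B_r$ through the exterior values, or at points outside $B_r$. Outside $B_r$ the issue is easy: raising values only increases $I$ at points where $W$ itself is touched, which helps subsolutions.

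\textbf{Step 3: continuity of $W$.} $W_*$ is a positive supersolution and $W=W^*$ is a nonnegative subsolution, both bounded and vanishing outside $\Omega$. Lemma~\ref{lem4} therefore gives $W^*\le W_*$. Combined with $W_*\le W\le W^*$, this shows that $W$ is continuous and hence a viscosity solution of \eqref{e1}.
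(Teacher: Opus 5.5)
Your proposal follows the paper's own route. You use envelope selection to show that $W^*$ is a subsolution. You then take a strict local bump at a point where $W_*$ fails the supersolution test, using $\overline w(x_0)>W_*(x_0)$ to stay below $\overline w$. Finally, Lemma~\ref{lem4} forces $W^*\le W_*$. The one structural difference is how the bump is built. The paper subtracts $d\vartheta$ from the whole patched function, so the bump lies below $W$ off $B_{r/2}(x_0)$ everywhere. Your local $\kappa|x-x_0|^4$ with $\varepsilon\ll\kappa r^4$ instead leaves a harmless $O(\varepsilon)$ tail error outside $V$.

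One correction: in Step~2 your inequalities are reversed. Failure of the supersolution property means $Iz(x_0)>F(x_0,W_*(x_0))+2\eta$. At $\overline w$, with $\bar z$ the patched function of $\overline w$, ellipticity gives $Iz(x_0)\le I\bar z(x_0)\le F(x_0,\overline w(x_0))$. Only with this sign does the strict subsolution property of $\zeta_\varepsilon$ follow.
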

\subsection{Existence and nonexistence of solutions to \eqref{e1}}
\begin{proof}[Proof of Theorem~\ref{thm11}]
   We first prove existence.  Let $\underline w$ and $\overline w$ be the
   sub- and supersolutions constructed in Lemmas~\ref{lem-positive-subsolution}
   and~\ref{lem-positive-supersolution}.  Lemma~\ref{lem4} gives
   $\underline w\leq\overline w$ in $\Omega$; since both functions vanish
   in $\R^n\setminus\Omega$, the ordering holds in all of $\R^n$.
   Lemma~\ref{lem-perron} therefore
   yields a positive viscosity solution $u$ of \eqref{e1}.  Uniqueness follows
   from Lemma~\ref{lem4}.

    We now prove nonexistence outside the stated parameter range.
    Suppose first that $\mu\leq\lambda(\Omega)$ and that $u$ is a positive viscosity solution of \eqref{e1}.
    Then
    \[
    Iu+\lambda(\Omega)u
    =k u^p+\bigl(\lambda(\Omega)-\mu\bigr)u\geq0
    \quad\text{in }\Omega.
    \]
    Thus $u$ is a nonnegative viscosity subsolution of the principal
    eigenvalue equation with zero exterior values.  The second assertion of
    Proposition~\ref{prop} implies that
    $u=\alpha\phi(\cdot,\Omega)$ for some $\alpha>0$.  On the other
    hand, the eigenvalue equation gives $Iu=-\lambda(\Omega)u$, whereas
    \eqref{e1} gives $Iu=ku^p-\mu u$.  Both right-hand sides are
    continuous.  Uniqueness of the continuous forcing therefore yields
    \[
       -\lambda(\Omega)u=ku^p-\mu u\quad\text{in }\Omega.
    \]
    At any point of the nonempty set $\{k>0\}$, the equivalent identity
    $ku^p+(\lambda(\Omega)-\mu)u=0$ has a strictly positive left-hand
    side, a contradiction.

    Finally, assume that \eqref{K2} holds and that $\mu\geq\lambda(\Omega_0)$.
    Since $k=0$ in $\Omega_0$, any positive solution would satisfy
    \[
    Iu+\lambda(\Omega_0)u
    =\bigl(\lambda(\Omega_0)-\mu\bigr)u\leq0
    \quad\text{in }\Omega_0.
    \]
    Thus $u$ is a nonnegative viscosity supersolution of the principal
    eigenvalue equation in $\Omega_0$.  The first assertion of
    Proposition~\ref{prop}, applied in $\Omega_0$, gives
    $u=\alpha\phi(\cdot,\Omega_0)$ in $\Omega_0$ and almost everywhere in
    $\R^n$ for some $\alpha>0$.  The almost-everywhere identity would then
    give $u=0$ almost everywhere on the nonempty open set
    $\{k>0\}=\Omega\setminus\overline{\Omega_0}$.  This contradicts the
    continuity and strict positivity of $u$ there.
\end{proof}
\section{Asymptotic analysis}
\subsection{An auxiliary local estimate}
For the local estimate below, define the weight
\[
\omega_s(y):=\frac{1}{1+|y|^{n+2s}},\qquad y\in\R^n,
\]
and the corresponding weighted tail norm
\[
\|w\|_{L^1_{\omega_s}(\R^n)}
:=\int_{\R^n}|w(y)|\,\omega_s(y)\,dy.
\]
We use the following local maximum principle of Caffarelli--Silvestre
\cite[Theorem~5.1]{caffarelli2011evans}; see also Fern\'andez-Real--Ros-Oton
\cite[Theorem~3.3.5]{fernandezreal2024integro}.
\begin{lem}[Local maximum principle]\label{lem28}
    There exists a constant $C_3=C_3(n,s,\lambda,\Lambda)>0$ with the
    following property.  Let $x_0\in\R^n$ and $r>0$.  If
    $u\in C(\overline{B_{4r}(x_0)})$ has finite
    $\|u\|_{L^1_{\omega_s}(\R^n)}$ and is a viscosity subsolution of
    \[
    \M^+u=f\quad\text{in }B_{4r}(x_0),
    \]
    where $f\in C(\overline{B_{4r}(x_0)})$, then
    \[
    \sup_{B_{2r}(x_0)}u\leq C_3\left(
       \|u(x_0+r\,\cdot)\|_{L^1_{\omega_s}(\R^n)}
       +r^{2s}\|f^-\|_{L^\infty(B_{4r}(x_0))}\right).
    \]
\end{lem}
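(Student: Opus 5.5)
The plan is to reduce the statement to the normalized form in which the local maximum principle is proved in \cite[Theorem~5.1]{caffarelli2011evans} and \cite[Theorem~3.3.5]{fernandezreal2024integro}. That form reads: if $v$ is a viscosity subsolution of $\M^+v\geq -C_0$ in $B_1$, with finite weighted tail norm, then
\[
\sup_{B_{1/2}}v\leq C\bigl(\|v^+\|_{L^1_{\omega_s}(\R^n)}+C_0\bigr).
\]
The reduction uses three ingredients: scaling, translation and a covering argument.

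\emph{Step 1: rescaling.} Set $v(x):=u(x_0+rx)$. The Pucci operators are translation invariant and homogeneous of degree $2s$ under dilations. So, testing with rescaled test functions and using the patching in Definition~\ref{def-viscosity}, one checks that $v$ is a viscosity subsolution of
\[
\M^+v=r^{2s}f(x_0+r\,\cdot)\quad\text{in }B_4.
\]
Since $\M^+v\geq r^{2s}f(x_0+r\,\cdot)\geq -r^{2s}\|f^-\|_{L^\infty(B_{4r}(x_0))}$, $v$ is a subsolution of $\M^+v=-C_0$ in $B_4$ with $C_0:=r^{2s}\|f^-\|_{L^\infty(B_{4r}(x_0))}$. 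Moreover, $\|v\|_{L^1_{\omega_s}}=\|u(x_0+r\,\cdot)\|_{L^1_{\omega_s}}$, which is finite. It therefore suffices to prove $\sup_{B_2}v\leq C_3(\|v\|_{L^1_{\omega_s}}+C_0)$.

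\emph{Step 2: covering.} Cover $\overline{B_2}$ by finitely many balls $B_{1/2}(z_j)$ with $z_j\in\overline{B_2}$; the number of balls depends only on $n$. For each $j$, the translate $v_j:=v(z_j+\cdot)$ is a subsolution of $\M^+v_j\geq -C_0$ in $B_1$, because $B_1(z_j)\subset B_4$. The normalized estimate gives
\[
\sup_{B_{1/2}(z_j)}v\leq C\bigl(\|v_j\|_{L^1_{\omega_s}}+C_0\bigr).
\]
Since $|z_j|\leq2$, we have the elementary comparison $\omega_s(y-z_j)\leq C(n,s)\,\omega_s(y)$. Changing variables then gives $\|v_j\|_{L^1_{\omega_s}}\leq C\|v\|_{L^1_{\omega_s}}$. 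Taking the maximum over $j$ yields the claim, with $C_3$ depending only on $n,s,\lambda,\Lambda$.

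\emph{Main obstacle.} The only delicate point is matching the notion of viscosity subsolution and the sign conventions with those of the cited references. The cited results are stated for $\M^+$ defined with the same kernel bounds $\lambda|y|^{-n-2s}\leq K\leq\Lambda|y|^{-n-2s}$ (up to the normalization of $\delta$). Their viscosity notion uses test functions on neighborhoods with the outside values of $u$ patched in, which is the same as Definition~\ref{def-viscosity}. The continuity of $u$ on $\overline{B_{4r}(x_0)}$ and the finiteness of the weighted $L^1$ norm are exactly what those statements require. I would check that the scaling in Step 1 preserves this notion, by rescaling the test function and its neighborhood. I would also check that the right-hand side enters only through $f^-$, which is immediate because we only use the lower bound $f\geq -f^-$.
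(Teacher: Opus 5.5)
Your proposal is correct and takes the same route as the paper, which gives no argument beyond citing \cite[Theorem~5.1]{caffarelli2011evans} and \cite[Theorem~3.3.5]{fernandezreal2024integro}; your reduction spells out what the paper leaves implicit, namely the rescaling $v=u(x_0+r\,\cdot)$, the translation and covering step with $\omega_s(y-z_j)\leq C(n,s)\,\omega_s(y)$, and the bound $f\geq -f^-$. The covering can even be skipped by rescaling with $4r$ instead of $r$, so that $B_{4r}(x_0)$ and $B_{2r}(x_0)$ become $B_1$ and $B_{1/2}$, and then using $\omega_s(\cdot/4)\leq 4^{n+2s}\omega_s$ to return to the tail norm of $u(x_0+r\,\cdot)$.
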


\subsection{Endpoint asymptotics}
Throughout this subsection, $C_{n,s}$ denotes a positive constant depending
only on $n$ and $s$, whose value may change from line to line.
\begin{proof}[Proof of Theorem~\ref{thm-lower-asymptotics}]
    We begin with monotonicity in the parameter, which will also be used
    below.  If
    $\lambda(\Omega)<\mu_1<\mu_2<\lambda(\Omega_0)$, then
    \[
       Iu_{\mu_1}+\mu_2u_{\mu_1}
       =ku_{\mu_1}^p+(\mu_2-\mu_1)u_{\mu_1}
       \geq ku_{\mu_1}^p.
    \]
    Hence $u_{\mu_1}$ is a viscosity subsolution for the problem with parameter
    $\mu_2$, and Lemma~\ref{lem4} gives
    $u_{\mu_1}\leq u_{\mu_2}$.

    Fix $\bar\mu$ in the existence interval with
    $\bar\mu>\lambda(\Omega)$.  By the preceding monotonicity,
    $0<u_\mu\leq u_{\bar\mu}$ whenever
    $\lambda(\Omega)<\mu\leq\bar\mu$.  Thus the family is uniformly
    bounded as $\mu\downarrow\lambda(\Omega)$.  Since
    $Iu_\mu=ku_\mu^p-\mu u_\mu$, the right-hand sides are uniformly
    bounded, and Lemma~\ref{lem3} gives a uniform global H\"older bound.
    Hence every sequence $\mu_j\downarrow\lambda(\Omega)$ has a
    subsequence, not relabeled, for which $u_{\mu_j}$ converges uniformly in
    $\R^n$ to some $u_0\in C(\R^n)$.  By the stability of viscosity solutions
    \cite[Corollary~4.7]{caffarelli2009regularity}, the limit satisfies,
    in the viscosity sense,
   \begin{equation}\label{Geq1}
    Iu_0+\lambda(\Omega)u_0=k u_0^p\quad\text{in }\Omega.
    \end{equation}
    In particular, $Iu_0+\lambda(\Omega)u_0\geq0$, so $u_0$ is a
    nonnegative viscosity subsolution of the principal-eigenvalue equation,
    with zero exterior values.  The second assertion of
    Proposition~\ref{prop} therefore gives
    $u_0=\alpha\phi(\cdot,\Omega)$ for some $\alpha\geq0$.
    Using \eqref{Geq1} and the eigenvalue equation, we obtain
    \[
    k u_0^p=0\quad\text{in }\Omega.
    \]
    Since $u_0=\alpha\phi(\cdot,\Omega)$, $k\not\equiv0$, and
    $\phi>0$ in $\Omega$, it follows that $\alpha=0$.  Since the initial sequence was
    arbitrary, the full family converges uniformly to zero.

    Set $M_\mu:=\|u_\mu\|_{L^\infty(\Omega)}$ and $v_\mu:=u_\mu/M_\mu$.
    Then $M_\mu\to0$ and, in the viscosity sense,
    \[
    Iv_\mu+\mu v_\mu=kM_\mu^{p-1}v_\mu^p\quad\text{in }\Omega.
    \]
    By the same compactness argument, every sequence
    $\mu_j\downarrow\lambda(\Omega)$ has a subsequence along which
    $v_{\mu_j}$ converges uniformly in $\R^n$ to a nonnegative function
    $v$.  By the stability of viscosity solutions
    \cite[Corollary~4.7]{caffarelli2009regularity}, the normalized
    equations imply that $v$ is a viscosity solution of
    \[
    Iv+\lambda(\Omega)v=0\quad\text{in }\Omega,
    \qquad v=0\quad\text{in }\R^n\setminus\Omega.
    \]
    By Proposition~\ref{prop}, the limit is a multiple of
    $\phi(\cdot,\Omega)$.  Uniform convergence preserves the
    $L^\infty(\R^n)$-norm, and both $v_\mu$ and
    $\phi(\cdot,\Omega)$ have norm one.  Hence the multiple equals one.
    Since every sequence admits only this possible subsequential limit, the
    asserted convergence holds for the full family.
\end{proof}

\begin{proof}[Proof of the first two assertions of Theorem~\ref{thm-upper-asymptotics}]
    Write
    \[
    M_\mu:=\|u_\mu\|_{L^\infty(\Omega)},
    \qquad v_\mu:=\frac{u_\mu}{M_\mu}.
    \]
    We first show that $M_\mu\to\infty$ as $\mu\uparrow\lambda(\Omega_0)$.
    If not, there would exist a sequence
    $\mu_j\uparrow\lambda(\Omega_0)$ for which $M_{\mu_j}$ remains bounded.
    The equation and Lemma~\ref{lem3} would then give a uniform global
    H\"older bound, so after passing to a subsequence
    $u_{\mu_j}\to u_*$ uniformly in $\R^n$.  By the stability of viscosity solutions
    \cite[Corollary~4.7]{caffarelli2009regularity}, $u_*$ is a viscosity
    solution of \eqref{e1} with $\mu=\lambda(\Omega_0)$.  Fix
    $\mu_*\in(\lambda(\Omega),\lambda(\Omega_0))$.  For all sufficiently
    large $j$, the monotonicity established above gives
    $u_{\mu_j}\geq u_{\mu_*}>0$ in $\Omega$.  Hence $u_*>0$ in $\Omega$,
    contradicting Theorem~\ref{thm11}.

    Since $k=0$ in $\Omega_0$, we have, in the viscosity sense,
    \[
    Iv_\mu+\mu v_\mu=0\quad\text{in }\Omega_0.
    \]
    Interior H\"older regularity
    \cite[Theorem~12.1]{caffarelli2009regularity} gives local compactness in
    $C(\Omega_0)$.
    Thus, along any sequence $\mu_j\uparrow\lambda(\Omega_0)$, write
    $M_j:=M_{\mu_j}$ and $v_j:=v_{\mu_j}$.  After passing to a subsequence,
    we may assume that
    \[
       v_j\to v_\infty
       \quad\text{locally uniformly in }\Omega_0.
    \]

    Before passing to the limit in the equation, we identify the behavior
    outside $\Omega_0$.  We claim that
    \begin{equation}\label{normalized-vanishing-away-from-sanctuary}
    v_j\to0
    \quad\text{locally uniformly in }
       \Omega\setminus\overline{\Omega_0}.
    \end{equation}
    Fix $K'\Subset\Omega\setminus\overline{\Omega_0}$.  Choose $r_K>0$
    and $\kappa_K>0$ so that
    \[
    \{x\in\mathbb R^n:\operatorname{dist}(x,K')\leq 2r_K\}
       \Subset\Omega\setminus\overline{\Omega_0},
    \]
    and
    \[
    k\geq\kappa_K
    \quad\text{on }
    \{x\in\mathbb R^n:\operatorname{dist}(x,K')\leq 2r_K\}.
    \]
    Set $A_K:=4/r_K^2$.  For $x\in K'$, choose
    $y_{j,x}\in\overline{B_{r_K}(x)}$ so that
    \[
       v_j(y_{j,x})-A_K|x-y_{j,x}|^2
       =\max_{y\in\overline{B_{r_K}(x)}}
       \bigl\{v_j(y)-A_K|x-y|^2\bigr\}.
    \]
    Comparison with $y=x$, together with $0\leq v_j\leq1$, gives
    \[
    |x-y_{j,x}|\leq\frac{r_K}{2},
    \qquad v_j(x)\leq v_j(y_{j,x}).
    \]
    With $\rho_K:=r_K/4$, the quadratic polynomial
    \[
    \psi_{j,x}(y):=v_j(y_{j,x})-A_K|x-y_{j,x}|^2
       +A_K|x-y|^2
    \]
    touches $v_j$ from above at $y_{j,x}$ in $B_{\rho_K}(y_{j,x})$.
    Let $Z_{j,x}$ be the function obtained by replacing $v_j$ by
    $\psi_{j,x}$ in this ball.  At $y_{j,x}$,
    \[
    \delta(Z_{j,x},y_{j,x},h)=2A_K|h|^2
       \quad\text{if }|h|<\rho_K,
    \]
    while, if $|h|\geq\rho_K$,
    \[
    \delta(Z_{j,x},y_{j,x},h)
       =\delta(v_j,y_{j,x},h)\leq2.
    \]
    Hence, uniformly in $a,b,j$, and $x\in K'$,
    \begin{align*}
    L_{a,b}Z_{j,x}(y_{j,x})
    &\leq2A_K\Lambda\int_{B_{\rho_K}}|h|^{2-n-2s}\,dh
       +2\Lambda\int_{\R^n\setminus B_{\rho_K}}
          |h|^{-n-2s}\,dh\\
    &\leq C_{n,s}\Lambda r_K^{-2s}.
    \end{align*}
    The normalized equation and the viscosity subsolution inequality now
    imply
    \begin{align*}
    \kappa_KM_j^{p-1}v_j(x)^p
    &\leq k(y_{j,x})M_j^{p-1}v_j(y_{j,x})^p\\
    &\leq IZ_{j,x}(y_{j,x})+\mu_jv_j(y_{j,x})\\
    &\leq C_{n,s}\Lambda r_K^{-2s}+\sup_j|\mu_j|.
    \end{align*}
    Since $M_j\to\infty$, this proves
    \eqref{normalized-vanishing-away-from-sanctuary}.

    Extend $v_\infty$ by zero in $\R^n\setminus\Omega_0$.  The boundaries
    of the $C^2$ domains $\Omega$ and $\Omega_0$ have measure zero.
    Therefore the local convergence in $\Omega_0$,
    \eqref{normalized-vanishing-away-from-sanctuary}, and the exterior
    condition imply
    \[
       v_j\to v_\infty\quad\text{almost everywhere in }\R^n.
    \]
    Since
    \[
       Iv_j=-\mu_jv_j\quad\text{in }\Omega_0,
    \]
    the local uniform convergence in $\Omega_0$, the almost-everywhere
    convergence in $\R^n$, and the stability of viscosity solutions
    \cite[Corollary~4.7]{caffarelli2009regularity} give, in the viscosity
    sense,
    \[
       Iv_\infty+\lambda(\Omega_0)v_\infty=0
       \quad\text{in }\Omega_0.
    \]
    Proposition~\ref{prop} then yields
    \[
    v_\infty=\alpha\phi(\cdot,\Omega_0)
    \quad\text{in }\Omega_0\text{ and almost everywhere in }\R^n
    \]
    for some $\alpha\geq0$.

    We next upgrade this convergence to uniform convergence in $\R^n$.
    Fix $\varepsilon\in(0,1)$.
    For $\eta>0$, we first note that
    \begin{equation}\label{scaled-tail-eigenfunction}
       \sup_{x\in\overline\Omega\setminus\Omega_0}
       \|\alpha\phi(x+\eta\,\cdot,\Omega_0)\|_{L^1_{\omega_s}(\R^n)}
       \longrightarrow0\qquad\text{as }\eta\downarrow0.
    \end{equation}
    Indeed, since $\phi_0:=\phi(\cdot,\Omega_0)$ is bounded and uniformly
    continuous in $\R^n$ and vanishes on $\R^n\setminus\Omega_0$,
    \eqref{scaled-tail-eigenfunction} follows by dominated convergence.
    Hence, using also the continuity of
    $\phi_0$ up to $\partial\Omega_0$ and the boundedness of
    $\{\mu_j\}$, we may choose $\eta>0$ so small that
    \[
    \|\alpha\phi(\cdot,\Omega_0)\|_{L^\infty((\Omega_0)_\eta)}
    \leq\frac{\varepsilon}{3}
    \]
    and, uniformly for $x\in\overline{\Omega}\setminus\Omega_0$,
    \[
    C_3\left(
      \|\alpha\phi(x+\eta\,\cdot,\Omega_0)\|_{L^1_{\omega_s}(\R^n)}
      +\eta^{2s}\sup_j|\mu_j|
    \right)\leq\frac{\varepsilon}{3}.
    \]
    Local uniform convergence then gives, for all sufficiently large $j$,
    \[
    \|v_j-\alpha\phi(\cdot,\Omega_0)\|_{L^\infty(\Omega_0\setminus(\Omega_0)_\eta)}
    \leq\frac{\varepsilon}{3}.
    \]
    By compactness, choose finitely many points
    $x_1,\dots,x_N\in\overline{\Omega}\setminus\Omega_0$ such that the
    balls $B_{2\eta}(x_i)$ cover
    \[
    (\overline{\Omega}\setminus\Omega_0)\cup\overline{(\Omega_0)_\eta}.
    \]
    By ellipticity, $v_j$ satisfies, in the viscosity
    sense,
    \[
    \M^+v_j\geq-\mu_jv_j\quad\text{in }\Omega.
    \]
    The local maximum principle may also be applied to balls meeting
    $\partial\Omega$.
    Lemma~\ref{lem28} therefore gives
    \begin{equation}\label{e422}
    \sup_{B_{2\eta}(x_i)}v_j
    \leq C_3\left(
       \|v_j(x_i+\eta\,\cdot)\|_{L^1_{\omega_s}(\R^n)}
       +\eta^{2s}|\mu_j|
    \right).
    \end{equation}
    The almost-everywhere convergence above and dominated convergence imply,
    for fixed $\eta$ and $i$, as $j\to\infty$,
    \[
    \|v_j(x_i+\eta\,\cdot)
       -\alpha\phi(x_i+\eta\,\cdot,\Omega_0)\|_{L^1_{\omega_s}(\R^n)}
       \longrightarrow0.
    \]
    By the choice of $\eta$ and the preceding convergence, the right-hand
    side of \eqref{e422} is at most $2\varepsilon/3$ for every $i$ and all
    sufficiently large $j$.  Together with the choice of $\eta$ and the
    interior estimate on $\Omega_0\setminus(\Omega_0)_\eta$, this proves
    \[
       \|v_j-\alpha\phi(\cdot,\Omega_0)\|_{L^\infty(\R^n)}
       \leq\varepsilon.
    \]
    Since $\|v_j\|_{L^\infty(\R^n)}=1$, it follows that $\alpha=1$.
    By the uniqueness of the limiting function, the full family satisfies
    \begin{equation}\label{e421}
       v_\mu\to\phi(\cdot,\Omega_0)
       \quad\text{uniformly in }\R^n
    \end{equation}
    as $\mu\uparrow\lambda(\Omega_0)$.

    Since $M_\mu\to\infty$ and $\phi(\cdot,\Omega_0)>0$ in $\Omega_0$, this already proves that $u_\mu\to\infty$ locally uniformly in $\Omega_0$.
    We now propagate the divergence to all of $\Omega$.
    Suppose, to the contrary, that there exist a compact set
    $\widetilde K\Subset\Omega$, a sequence
    $\mu_j\uparrow\lambda(\Omega_0)$, points
    $x_j\in\widetilde K$, and a constant $L>0$ such that, writing
    $u_j:=u_{\mu_j}$,
    \[
       u_j(x_j)\leq L\qquad\text{for every }j.
    \]
    After passing to a subsequence, assume that $x_j\to\widehat x\in\widetilde K$.
    Choose a closed ball $E\Subset\Omega_0$ with $\widehat x\notin E$ and
    $r>0$ so small that
    \[
    B_r(x_j)\subset B_{2r}(\widehat x)\Subset\Omega,
    \qquad
    \dist\bigl(E,B_{2r}(\widehat x)\bigr)>r
    \]
    for all sufficiently large $j$.  Set
    \[
    A:=\frac{4L}{r^2}.
    \]
    Let $y_j\in\overline{B_r(x_j)}$ minimize
    \[
    y\longmapsto u_j(y)+A|x_j-y|^2.
    \]
    Positivity of $u_j$ and comparison with the value at $x_j$ show that
    \[
    |x_j-y_j|\leq\frac r2,
    \qquad u_j(y_j)\leq L.
    \]
    Put $\rho:=r/4$.  Then $B_\rho(y_j)\subset B_r(x_j)$, and
    \[
    \varphi_j(y):=u_j(y_j)+A|x_j-y_j|^2-A|x_j-y|^2
    \]
    touches $u_j$ from below at $y_j$ in $B_\rho(y_j)$.  Accordingly, define
    \[
    z_j(y):=
    \begin{cases}
    \varphi_j(y),&y\in B_\rho(y_j),\\[1mm]
    u_j(y),&y\in\R^n\setminus B_\rho(y_j).
    \end{cases}
    \]
    The viscosity supersolution inequality gives
    \[
    Iz_j(y_j)+\mu_j u_j(y_j)\leq k(y_j)u_j(y_j)^p.
    \]
    To estimate the nonlocal term, set $m_j:=\inf_E u_j$.  Since
    $E\Subset\Omega_0$, we know that $m_j\to\infty$.  For
    $|h|<\rho$,
    \begin{equation}\label{local-quadratic-difference}
    \delta(z_j,y_j,h)=-2A|h|^2.
    \end{equation}
    If $|h|\geq\rho$, then $z_j(y_j\pm h)=u_j(y_j\pm h)$.  Hence,
    since $u_j\geq0$ and $u_j(y_j)\leq L$,
    \begin{equation}\label{far-difference-lower-bound}
    \delta(z_j,y_j,h)
    =u_j(y_j+h)+u_j(y_j-h)-2u_j(y_j)
    \geq-2L.
    \end{equation}
    Finally, for $h\in E-y_j$, the separation of $E$ from
    $B_{2r}(\widehat x)$ and the nonnegativity of $u_j$ yield
    \begin{equation}\label{sanctuary-difference-lower-bound}
    \delta(z_j,y_j,h)
    =u_j(y_j+h)+u_j(y_j-h)-2u_j(y_j)
    \geq m_j-2L.
    \end{equation}
    For each $a\in A$ and $b\in B$, write
    \[
    L_{a,b}w(x):=\int_{\R^n}\delta(w,x,h)K_{a,b}(h)\,dh.
    \]
    The separation chosen above implies
    $E-y_j\subset\R^n\setminus B_\rho$.  Hence $B_\rho$, $E-y_j$, and
    $\R^n\setminus\bigl(B_\rho\cup(E-y_j)\bigr)$ form a disjoint partition of
    $\R^n$, up to their boundaries.  We use
    \eqref{local-quadratic-difference} on $B_\rho$,
    \eqref{sanctuary-difference-lower-bound} on $E-y_j$, and
    \eqref{far-difference-lower-bound} on
    $\R^n\setminus\bigl(B_\rho\cup(E-y_j)\bigr)$.  The bounds on the kernels
    then give, for all sufficiently large indices $j$,
    \begin{align*}
    L_{a,b}z_j(y_j)
    &\geq \lambda(m_j-2L)
       \int_{E-y_j}\frac{dh}{|h|^{n+2s}}
       -2A\Lambda\int_{B_\rho}\frac{|h|^2}{|h|^{n+2s}}\,dh\\
    &\hspace{25mm}
       -2L\Lambda\int_{\R^n\setminus\left(B_\rho\cup(E-y_j)\right)}
          \frac{dh}{|h|^{n+2s}}\\
    &\geq \lambda(m_j-2L)
       \int_{E-y_j}\frac{dh}{|h|^{n+2s}}
       -2A\Lambda\int_{B_\rho}|h|^{2-n-2s}\,dh\\
    &\hspace{25mm}
       -2L\Lambda\int_{\R^n\setminus B_\rho}
          |h|^{-n-2s}\,dh\\
    &\geq
       \frac{\lambda|E|}{(\operatorname{diam}\Omega)^{n+2s}}(m_j-2L)
       -C_{n,s}\Lambda Lr^{-2s}.
    \end{align*}
    In the last line we used $A=4L/r^2$, $\rho=r/4$, and the elementary radial estimates
    \[
    \int_{B_\rho}|h|^{2-n-2s}\,dh
       \leq C_{n,s}\rho^{2-2s},
    \qquad
    \int_{\R^n\setminus B_\rho}|h|^{-n-2s}\,dh
       \leq C_{n,s}\rho^{-2s}.
    \]
    The bound is uniform in $a$ and $b$.  Therefore
    \begin{equation}
    Iz_j(y_j)=\inf_{a\in A}\sup_{b\in B}L_{a,b}z_j(y_j)
    \geq
       \frac{\lambda|E|}{(\operatorname{diam}\Omega)^{n+2s}}(m_j-2L)
       -C_{n,s}\Lambda Lr^{-2s},
    \end{equation}
    whose right-hand side tends to $+\infty$.  On the other hand, the
    viscosity inequality gives
    \[
    Iz_j(y_j)\leq
    \|k\|_{L^\infty(\Omega)}L^p
       +\left(\sup_j|\mu_j|\right)L,
    \]
    a contradiction.  This proves the locally uniform divergence throughout
    $\Omega$.
\end{proof}

\begin{proof}[Proof of the final assertion of Theorem~\ref{thm-upper-asymptotics}]
    Write
    \[
       M_\mu:=\|u_\mu\|_{L^\infty(\Omega)},
       \qquad
       q_\mu:=\frac{u_\mu}{M_\mu^{1/p}},
       \qquad
       \phi_0:=\phi(\cdot,\Omega_0).
    \]
    Fix a compact set $K\Subset\Omega\setminus\overline{\Omega_0}$ and
    choose $r>0$ so that
    \begin{equation}\label{d}
       \dist(K,\Omega_0)>2r,
       \qquad
       \dist(K,\R^n\setminus\Omega)>2r.
    \end{equation}
    We first verify that the proposed limit is well defined and continuous on
    $K$.  If $x\in K$, then $\phi_0$ vanishes in a neighborhood of $x$, so
    $I\phi_0(x)$ is classically defined.  By the symmetry of the kernels
    and (H1), $I\phi_0(x)>0$.
    By the uniform continuity of $\phi_0$, \eqref{d}, and the kernel bounds,
    \[
       \sup_{a,b}|L_{a,b}\phi_0(x)-L_{a,b}\phi_0(x')|\longrightarrow0
       \qquad
       \text{as }x'\to x\text{ in }K.
    \]
    The elementary estimate
    \[
       \left|\inf_a\sup_b A_{a,b}-\inf_a\sup_b B_{a,b}\right|
       \leq\sup_{a,b}|A_{a,b}-B_{a,b}|
    \]
    shows that $I\phi_0$ is continuous on $K$.  Since $k$ is continuous and
    strictly positive there, the function
    \[
       G(x):=\frac{I\phi_0(x)}{k(x)}
    \]
    is positive and continuous on $K$.

    We prove
    \begin{equation}\label{eq-second-scale-power}
       \|q_\mu^p-G\|_{L^\infty(K)}\longrightarrow0
       \qquad\text{as }\mu\uparrow\lambda(\Omega_0).
    \end{equation}
    Suppose, to the contrary, that \eqref{eq-second-scale-power} fails.
    Then there exist $\varepsilon_0>0$, a sequence
    $\mu_j\uparrow\lambda(\Omega_0)$, and points $x_j\in K$ such that,
    writing
    \[
       u_j:=u_{\mu_j},\qquad
       M_j:=M_{\mu_j},\qquad
       v_j:=\frac{u_j}{M_j},\qquad
       q_j:=\frac{u_j}{M_j^{1/p}},
    \]
    one has
    \begin{equation}\label{eq-second-scale-failure}
       |q_j(x_j)^p-G(x_j)|\geq\varepsilon_0
       \qquad\text{for every }j.
    \end{equation}
    After passing to a subsequence, assume $x_j\to x_\infty\in K$.
    We first prove the upper bound.  Let
    $y_j\in\overline{B_r(x_j)}$ maximize
    \[
       y\longmapsto q_j(y)-M_j^{1-1/p}|x_j-y|^2.
    \]
    Since $q_j=M_j^{1-1/p}v_j$, comparison with $y=x_j$ gives
    \[
       |x_j-y_j|^2\leq v_j(y_j)-v_j(x_j),
       \qquad
       q_j(x_j)\leq q_j(y_j).
    \]
    Since $v_j\to\phi_0$ uniformly in $\R^n$ and
    $\dist(K,\Omega_0)>2r$, the right-hand side tends to zero; hence
    $|x_j-y_j|\to0$.

    Fix $\eta\in(0,r/2)$.  For all sufficiently large $j$, define
    \[
       z_j^\eta(y):=
       \begin{cases}
       |x_j-y|^2+v_j(y_j)-|x_j-y_j|^2,
          &y\in B_\eta(y_j),\\
       v_j(y),&y\in\R^n\setminus B_\eta(y_j).
       \end{cases}
    \]
    The maximizing property of $y_j$ shows that $z_j^\eta$ is the patched
    upper test for $v_j$ at $y_j$.  At the touching point,
    \begin{equation}\label{upper-patched-difference}
       \delta(z_j^\eta,y_j,h)=
       \begin{cases}
       2|h|^2,&|h|<\eta,\\
       \delta(v_j,y_j,h),&|h|\geq\eta.
       \end{cases}
    \end{equation}
    The viscosity subsolution inequality for
    $Iv_j+\mu_jv_j=kq_j^p$ gives
    \[
       Iz_j^\eta(y_j)+\mu_jv_j(y_j)
       \geq k(y_j)q_j(y_j)^p.
    \]
    Therefore
    \begin{equation}\label{eq-upper-qj}
       \limsup_{j\to\infty}q_j(x_j)^p
       \leq
       \limsup_{j\to\infty}
       \frac{Iz_j^\eta(y_j)+\mu_jv_j(y_j)}{k(y_j)}.
    \end{equation}

    Since $v_j\to\phi_0$ uniformly in $\R^n$, $y_j\to x_\infty$,
    and $\phi_0$ is uniformly continuous,
    \[
       \sup_{h\in\R^n}
       |\delta(v_j,y_j,h)-\delta(\phi_0,x_\infty,h)|\longrightarrow0.
    \]
    Since $\delta(\phi_0,x_\infty,h)=0$ for $|h|<2r$,
    \eqref{upper-patched-difference} and the kernel bounds give, for every
    fixed $\eta\in(0,r/2)$,
    \begin{equation}\label{upper-operator-limit}
       \limsup_{j\to\infty}
       |Iz_j^\eta(y_j)-I\phi_0(x_\infty)|
       \leq C_{n,s}\Lambda\eta^{2-2s}.
    \end{equation}
    From \eqref{eq-upper-qj}--\eqref{upper-operator-limit},
    \[
       \limsup_{j\to\infty}q_j(x_j)^p
       \leq\frac{I\phi_0(x_\infty)
          +C_{n,s}\Lambda\eta^{2-2s}}{k(x_\infty)}.
    \]
    Letting the patch radius $\eta\downarrow0$ yields
    \begin{equation}\label{eq-upper-second-scale}
       \limsup_{j\to\infty}q_j(x_j)^p\leq G(x_\infty).
    \end{equation}

    For the lower bound, let
    $\widehat y_j\in\overline{B_r(x_j)}$ minimize
    \[
       y\longmapsto q_j(y)+M_j^{1-1/p}|x_j-y|^2.
    \]
    Repeating the preceding argument with lower tests and the viscosity
    supersolution inequality gives
    \begin{equation}\label{eq-lower-second-scale}
       \liminf_{j\to\infty}q_j(x_j)^p\geq G(x_\infty).
    \end{equation}

    Combining \eqref{eq-upper-second-scale} and
    \eqref{eq-lower-second-scale}, we obtain
    $q_j(x_j)^p\to G(x_\infty)$.  Since $G$ is continuous on $K$ and
    $x_j\to x_\infty$, also $G(x_j)\to G(x_\infty)$, contradicting
    \eqref{eq-second-scale-failure}.  This proves
    \eqref{eq-second-scale-power}.

    Finally, the map $t\mapsto t^{1/p}$ is uniformly continuous on every
    bounded interval of $[0,\infty)$.  Since $G$ is bounded on $K$,
    \eqref{eq-second-scale-power} implies
    \[
       \left\|\frac{u_\mu}{M_\mu^{1/p}}-G^{1/p}\right\|_{L^\infty(K)}
       \longrightarrow0
       \qquad\text{as }\mu\uparrow\lambda(\Omega_0).
    \]
    As $K\Subset\Omega\setminus\overline{\Omega_0}$ was arbitrary, this is
    the asserted locally uniform convergence.
\end{proof}
\appendix
\section{Proof of the difference inequality}\label{app-difference}

Lemma~2.2 is a slight modification of
\cite[Lemma~5.8]{caffarelli2009regularity}; the additional point is the
state-dependent zeroth-order term. We also use the standard sup/inf-convolution
argument; see \cite[Section~5]{caffarelli2009regularity} and
\cite[Section~3]{crandall1992users}.

\begin{lem}[Patched-test stability]\label{lem-patched-stability}
Let $x_j\to x$ and let $q_j,q\in L^\infty(\R^n)$ be uniformly bounded.
Assume that $q_j\to q$ in $C^2$ on a fixed neighborhood of $x$ and
\[
   \int_{\R^n}\frac{|q_j(y)-q(y)|}{1+|y|^{n+2s}}\,dy\longrightarrow0.
\]
Then
\[
   Iq_j(x_j)\to Iq(x),
   \qquad
   \M^\pm q_j(x_j)\to \M^\pm q(x).
\]
\end{lem}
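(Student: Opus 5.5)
The plan is to prove the convergence for each fixed kernel operator $L_{a,b}$ with an error bound that does not depend on $(a,b)$, and then pass to $I$ and to $\M^\pm$ through the elementary inf--sup estimate
\[
\Bigl|\inf_a\sup_b A_{a,b}-\inf_a\sup_b B_{a,b}\Bigr|\leq\sup_{a,b}|A_{a,b}-B_{a,b}|,
\]
already used in the proof of Theorem~\ref{thm-upper-asymptotics}. The Pucci operators $\M^\pm$ are the sup and inf of $L_K$ over kernels with $\lambda|y|^{-n-2s}\le K\le\Lambda|y|^{-n-2s}$, so they are covered by the same estimate. It therefore suffices to show that
\[
\sup_{\lambda|y|^{-n-2s}\le K\le \Lambda|y|^{-n-2s}}\ \int_{\R^n}\bigl|\delta(q_j,x_j,h)-\delta(q,x,h)\bigr|\,\frac{\Lambda}{|h|^{n+2s}}\,dh\longrightarrow0.
\]

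Let $U=B_{2\rho}(x)$ be the neighborhood on which $q_j\to q$ in $C^2$. For $j$ large we have $|x_j-x|<\rho/2$, so $x_j\pm h\in U$ whenever $|h|<\rho$. I split the integral into the regions $|h|<\varepsilon$, $\varepsilon\le|h|<\rho$, and $|h|\ge\rho$.

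\emph{Region $|h|<\varepsilon$.} Taylor's formula gives $|\delta(q_j,x_j,h)|+|\delta(q,x,h)|\le C|h|^2$, where $C$ is a uniform $C^2$ bound on $U$. This part is therefore at most $C\Lambda\varepsilon^{2-2s}$, uniformly in $j$.

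\emph{Region $\varepsilon\le|h|<\rho$.} Here the integrand tends to zero uniformly in $h$. Indeed, $q_j\to q$ uniformly on $U$ and $q$ is uniformly continuous there, so $\delta(q_j,x_j,h)-\delta(q,x,h)\to0$ uniformly. The weight is bounded by $\varepsilon^{-n-2s}$ on this bounded region, so this piece tends to zero.

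\emph{Region $|h|\ge\rho$.} This is the main obstacle, because the convergence is only in a weighted $L^1$ sense and the base point moves. I write
\[
\delta(q_j,x_j,h)-\delta(q,x,h)=\bigl[\delta(q_j,x_j,h)-\delta(q,x_j,h)\bigr]+\bigl[\delta(q,x_j,h)-\delta(q,x,h)\bigr].
\]
For the first bracket, the terms $q_j(x_j\pm h)-q(x_j\pm h)$ contribute, after the substitution $y=x_j\pm h$, an amount bounded by
\[
C_\rho\int\frac{|q_j-q|(y)}{1+|y|^{n+2s}}\,dy\to0,
\]
since for $|y-x_j|\ge\rho$ with $x_j$ bounded we have $|y-x_j|^{-n-2s}\le C_\rho(1+|y|^{n+2s})^{-1}$. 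The term $2|q_j(x_j)-q(x_j)|$ tends to zero by the uniform convergence on $U$.

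For the second bracket, the contribution of $2(q(x_j)-q(x))$ vanishes by continuity of $q$. The remaining piece, $\int_{|h|\ge\rho}|q(x_j+h)-q(x+h)|\,|h|^{-n-2s}\,dh$, tends to zero by continuity of translations in $L^1$. To see this, take $R$ large so that the tail $|h|>R$ is at most $2\|q\|_\infty\int_{|h|>R}|h|^{-n-2s}\,dh$, which is small. On $\rho\le|h|\le R$ the weight is bounded, and $\|q(\cdot+x_j)-q(\cdot+x)\|_{L^1(B_{R+1}(x))}\to0$ because $q\in L^\infty\subset L^1_{\mathrm{loc}}$.

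Finally, letting $j\to\infty$ and then $\varepsilon\downarrow0$ gives the uniform-in-kernel convergence, and hence both conclusions $Iq_j(x_j)\to Iq(x)$ and $\M^\pm q_j(x_j)\to\M^\pm q(x)$.
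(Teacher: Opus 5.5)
Your proof is correct and follows essentially the same route as the paper. You bound $\int|\delta(q_j,x_j,h)-\delta(q,x,h)|\,\Lambda|h|^{-n-2s}\,dh$ independently of the kernel by splitting into three regions: a small ball controlled by the uniform $C^2$ bounds, an intermediate region handled by the weighted $L^1$ convergence together with continuity of translations in $L^1_{\mathrm{loc}}$, and a tail controlled by boundedness; you then pass to $I$ and $\M^\pm$ through the $\inf\sup$ estimate, and you spell out details the paper compresses into a few lines (one harmless notational slip: the translation term should read $\int_{|h|\le R}|q(x_j+h)-q(x+h)|\,dh\to0$, an $L^1$ norm over $B_R(0)$ in the $h$-variable rather than over $B_{R+1}(x)$).
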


\begin{proof}
Fix $0<\rho<R$.  On $|h|<\rho$, the conclusion follows from the local
$C^2$ convergence.  On $\rho\leq |h|\leq R$, the weighted convergence and
translation continuity in $L^1_{\rm loc}$ control the moving translates, while
on $|h|>R$ the contribution is uniformly small by boundedness.  Letting first
$j\to\infty$, then $\rho\downarrow0$ and $R\uparrow\infty$, gives the claim.
The kernel bounds make these estimates uniform in $a,b$, so the infimum and
supremum may be passed to the limit.
\end{proof}

\begin{proof}[Proof of Lemma~\ref{lem-difference}]
    Let $\psi$ touch $w=v-u$ from below at $x_0\in\Omega$. For $\delta>0$, replace $\psi$ by
    \[
       \psi_\delta(x):=\psi(x)-\delta|x-x_0|^4,
    \]
    so that the contact is strict. We carry out the argument for $\psi_\delta$ and let $\delta\downarrow0$ at the end; the passage to the limit is justified by Lemma~\ref{lem-patched-stability}. To simplify notation, we continue to write $\psi$.
    Shrinking the contact neighborhood, choose $r>0$ so
    that $\psi\in C^2(\overline{B_{4r}(x_0)})$ and
    $\overline{B_{4r}(x_0)}\Subset\Omega$.  For $x\in B_{2r}(x_0)$ set
    \[
       u^\varepsilon(x)
       :=\sup_{y\in\overline{B_{3r}(x_0)}}
       \left\{u(y)-\frac{|x-y|^2}{2\varepsilon}\right\},
       \qquad
       v_\varepsilon(x)
       :=\inf_{z\in\overline{B_{3r}(x_0)}}
       \left\{v(z)+\frac{|x-z|^2}{2\varepsilon}\right\},
    \]
    and $w_\varepsilon:=v_\varepsilon-u^\varepsilon$.  The extrema are
    attained.  By a standard sup/inf-convolution argument, there are
    $x_\varepsilon\to x_0$ and $c_\varepsilon\to0$ such that
    $\psi+c_\varepsilon$ touches $w_\varepsilon$ from below at
    $x_\varepsilon\in B_r(x_0)$ and
    $w_\varepsilon(x_\varepsilon)\to w(x_0)$.

    Choose active points $y_\varepsilon,z_\varepsilon$ at $x_\varepsilon$ and
    write
    \[
       h_\varepsilon:=y_\varepsilon-x_\varepsilon,
       \qquad
       \ell_\varepsilon:=z_\varepsilon-x_\varepsilon,
       \qquad
       \alpha_\varepsilon:=\frac{|h_\varepsilon|^2}{2\varepsilon},
       \qquad
       \beta_\varepsilon:=\frac{|\ell_\varepsilon|^2}{2\varepsilon}.
    \]
    Boundedness gives
    $|h_\varepsilon|+|\ell_\varepsilon|\leq C\sqrt\varepsilon$; hence, for
    small $\varepsilon$, the active points lie in $B_{2r}(x_0)\Subset\Omega$.
    Moreover
    \[
       w_\varepsilon(x_\varepsilon)
       =v(z_\varepsilon)-u(y_\varepsilon)
        +\alpha_\varepsilon+\beta_\varepsilon.
    \]
    By semicontinuity and the convergence
    $w_\varepsilon(x_\varepsilon)\to w(x_0)$, we have
    \[
       v(z_\varepsilon)-u(y_\varepsilon)\longrightarrow w(x_0),
       \qquad
       \alpha_\varepsilon+\beta_\varepsilon\longrightarrow0.
    \]

    We next transfer the viscosity inequalities at the active points.  The
    functions $u^\varepsilon$ and $v_\varepsilon$ are respectively semiconvex
    and semiconcave.  Since $\psi+c_\varepsilon$ touches
    $v_\varepsilon-u^\varepsilon$ from below at $x_\varepsilon$, the argument
    of \cite[Lemma~5.8]{caffarelli2009regularity} shows that $u^\varepsilon$ and
    $v_\varepsilon$ are both punctually $C^{1,1}$ at $x_\varepsilon$.  Hence the translated functions
    \[
       X\longmapsto u^\varepsilon(X-h_\varepsilon)+\alpha_\varepsilon,
       \qquad
       X\longmapsto v_\varepsilon(X-\ell_\varepsilon)-\beta_\varepsilon
    \]
    are punctually $C^{1,1}$ at $y_\varepsilon$ and $z_\varepsilon$,
    respectively, and may be used as viscosity test functions by
    \cite[Lemma~4.3]{caffarelli2009regularity}.  For small $\varepsilon$ the
    first lies above $u$ on $B_r(x_0+h_\varepsilon)$ and equals $u$ at
    $y_\varepsilon$, while the second lies below $v$ on
    $B_r(x_0+\ell_\varepsilon)$ and equals $v$ at $z_\varepsilon$.  After
    patching with $u,v$ outside these balls and
    translating back to $x_\varepsilon$, set
    \[
       U_\varepsilon(X):=
       \begin{cases}
          u^\varepsilon(X),&X\in B_r(x_0),\\
          u(X+h_\varepsilon)-\alpha_\varepsilon,
             &X\in\R^n\setminus B_r(x_0),
       \end{cases}
    \]
    and
    \[
       V_\varepsilon(X):=
       \begin{cases}
          v_\varepsilon(X),&X\in B_r(x_0),\\
          v(X+\ell_\varepsilon)+\beta_\varepsilon,
             &X\in\R^n\setminus B_r(x_0).
       \end{cases}
    \]
    Translation and constant invariance give
    \[
       IU_\varepsilon(x_\varepsilon)
       \geq F(y_\varepsilon,u(y_\varepsilon)),
       \qquad
       IV_\varepsilon(x_\varepsilon)
       \leq G(z_\varepsilon,v(z_\varepsilon)).
    \]

    Let $q_\varepsilon$ have local branch $\psi+c_\varepsilon$ in
    $B_r(x_0)$ and exterior branch $V_\varepsilon-U_\varepsilon$.  Since
    $\psi+c_\varepsilon\leq w_\varepsilon
    =V_\varepsilon-U_\varepsilon$ throughout $B_r(x_0)$, with equality at
    $x_\varepsilon$, ellipticity gives
    \begin{align*}
       \M^-q_\varepsilon(x_\varepsilon)
       &\leq IV_\varepsilon(x_\varepsilon)
              -IU_\varepsilon(x_\varepsilon)\\
       &\leq G(z_\varepsilon,v(z_\varepsilon))
              -F(y_\varepsilon,u(y_\varepsilon))\\
       &\leq H\bigl(x_\varepsilon,
              v(z_\varepsilon)-u(y_\varepsilon)\bigr)+o(1),
    \end{align*}
    where the last line follows from the uniform continuity of $F$ and $G$
    on $\overline\Omega\times J$ and
    \eqref{eq-continuous-majorant}.

    Finally let
    \[
       q(X):=
       \begin{cases}
          \psi(X),&X\in B_r(x_0),\\
          w(X),&X\in\R^n\setminus B_r(x_0).
       \end{cases}
    \]
    be the patched test for $w$.  Since
    $h_\varepsilon,\ell_\varepsilon\to0$ and
    $\alpha_\varepsilon+\beta_\varepsilon\to0$, boundedness and translation
    continuity in $L^1_{\rm loc}$ give
    \[
       \int_{\R^n}\frac{|q_\varepsilon-q|}{1+|y|^{n+2s}}\,dy
       \longrightarrow0.
    \]
    Also $q_\varepsilon\to q$ in $C^2$ near $x_0$.  Hence
    Lemma~\ref{lem-patched-stability} yields
    \[
       \M^-q_\varepsilon(x_\varepsilon)\longrightarrow\M^-q(x_0).
    \]
    Passing to the limit gives
    \[
       \M^-q(x_0)\leq H(x_0,w(x_0)),
    \]
    which proves that $w$ is a viscosity supersolution of
    \eqref{eq-difference-semicont}.
\end{proof}

\section{Perron construction without properness}\label{app-perron}

For completeness, we give the proof of Lemma~\ref{lem-perron}.  The
argument follows the Perron envelope and local bump strategy of
\cite[Lemma~3.1 and Theorem~3.2]{mou2017perron}, but it is written directly
for the present translation-invariant Isaacs operator.  In particular, no
properness assumption in the zeroth-order variable is used.

\begin{proof}[Proof of Lemma~\ref{lem-perron}]
    The family $\mathcal F$ is nonempty because $\underline w\in\mathcal F$.
    We write
    \[
       W^*(x):=\limsup_{y\to x}W(y),
       \qquad
       W_*(x):=\liminf_{y\to x}W(y).
    \]
    Since the barriers are continuous,
    \[
       \underline w\leq W_*\leq W^*\leq\overline w
       \quad\text{in }\R^n.
    \]

    We first show that $W^*$ is a viscosity subsolution.  Let
    $\zeta\in C^2(\overline V)$ touch $W^*$ from above at $x_0\in V\Subset
    \Omega$.  As in the proof of Lemma~\ref{lem-difference}, we may assume
    that $W^*-\zeta$ attains a strict local maximum at $x_0$.  Choose $r>0$ with
    $\overline{B_r(x_0)}\Subset V$ and let $q$ be the corresponding patched
    function with exterior branch $W^*$.  By the standard upper-envelope
    selection (cf. \cite[Lemma~3.1]{mou2017perron}), there exist
    $z_j\in\mathcal F$, $x_j\to x_0$, and $c_j\to0$ such that
    $\zeta+c_j$ touches $z_j$ from above at $x_j$ in $B_r(x_0)$ and
    $z_j(x_j)\to W^*(x_0)$.

    Let $p_j$ be the corresponding patched test with exterior branch $z_j$,
    and let $Q_j$ have the same local branch $\zeta+c_j$ and exterior branch
    $W^*$.  Since $z_j\leq W^*$, ellipticity and the subsolution property give
    \[
       IQ_j(x_j)+\mu z_j(x_j)\geq k(x_j)z_j(x_j)^p.
    \]
    Since $Q_j\to q$ locally in $C^2$ near $x_0$ and in weighted $L^1$,
    Lemma~\ref{lem-patched-stability} yields
    \[
       Iq(x_0)+\mu W^*(x_0)\geq k(x_0)W^*(x_0)^p.
    \]
    Thus $W^*$ is a subsolution.  Hence $W^*\in\mathcal F$, and the definition
    of $W$ gives $W^*\leq W$.  Since always $W\leq W^*$, we have
    \begin{equation}\label{eq-perron-usc}
       W=W^*.
    \end{equation}

    We next prove that $W_*$ is a viscosity supersolution.  Suppose, to the
    contrary, that it is not.  Then there exist $x_0\in\Omega$ and a $C^2$
    function $\zeta$ touching $W_*$ from below at $x_0$ such that, for the
    corresponding patched function $q$,
    \begin{equation}\label{eq-perron-strict}
       -Iq(x_0)-\mu W_*(x_0)+k(x_0)W_*(x_0)^p<0.
    \end{equation}
    If necessary, for $\delta>0$ sufficiently small, replace $\zeta$ by
    \[
       \zeta_\delta(x):=\zeta(x)-\delta|x-x_0|^4.
    \]
    Then the contact is strict, while the supersolution inequality remains
    violated.  Shrinking the contact neighborhood if necessary, choose $R>0$
    with $\overline{B_{4R}(x_0)}\Subset\Omega$ such that
    $\zeta\leq W_*$ on $\overline{B_{4R}(x_0)}$, and set
    \[
       q_R(y)=
       \begin{cases}
          \zeta(y),&y\in\overline{B_{4R}(x_0)},\\
          W_*(y),&y\in\R^n\setminus\overline{B_{4R}(x_0)}.
       \end{cases}
    \]
    As $R$ increases, $Iq_R(x_0)$ is nonincreasing.  Hence, by taking $R$
    sufficiently small, \eqref{eq-perron-strict} remains valid with $q$
    replaced by $q_R$.
    The function $q_R$ is bounded, lower semicontinuous, and $C^2$ in
    $B_{4R}(x_0)$.

    Let
    \[
       \vartheta(x):=
       \frac{|x-x_0|^4}{1+|x-x_0|^4}.
    \]
    Choose $d>0$ so small that, with
    \[
       \Phi:=q_R-d\vartheta,
    \]
    Lemma~\ref{lem-patched-stability} gives
    \[
       -I\Phi(x_0)-\mu\Phi(x_0)+k(x_0)\Phi(x_0)^p<0.
    \]
    Since $\Phi$ is $C^2$ in $B_{4R}(x_0)$, Lemma~\ref{lem-patched-stability},
    applied with the function fixed and the evaluation point varying, shows
    that $x\mapsto I\Phi(x)$ is continuous in $B_{3R}(x_0)$.  Since
    $\Phi(x_0)=W_*(x_0)>0$, there are $r\in(0,R)$ and $c_0>0$ such that
    \begin{equation}\label{eq-perron-local-strict}
       -I(\Phi+c)(x)-\mu\bigl(\Phi(x)+c\bigr)
       +k(x)\bigl(\Phi(x)+c\bigr)^p<0
    \end{equation}
    for $x\in\overline{B_{2r}(x_0)}$ and $0\leq c\leq c_0$, and
    $\Phi>0$ on $\overline{B_{2r}(x_0)}$.

    We also have $\overline w(x_0)>W_*(x_0)$.  Indeed, if equality held,
    then $q_R\leq W_*\leq\overline w$ in $\R^n$ and
    $q_R(x_0)=\overline w(x_0)$.  Thus the supersolution inequality for
    $\overline w$, together with ellipticity, would contradict
    \eqref{eq-perron-strict}.  After decreasing $r$ if necessary, choose
    $\sigma>0$ so that
    \begin{equation}\label{eq-perron-upper-gap}
       \Phi+2\sigma\leq\overline w
       \quad\text{in }B_{2r}(x_0).
    \end{equation}
    Finally set
    \[
       d_r:=d\inf_{\R^n\setminus B_{r/2}(x_0)}\vartheta>0.
    \]
    Since $q_R\leq W_*\leq W$ by \eqref{eq-perron-usc},
    \begin{equation}\label{eq-perron-global-gap}
       W\geq\Phi+d_r
       \quad\text{in }\R^n\setminus B_{r/2}(x_0).
    \end{equation}

    Choose $0<c<\min\{c_0,\sigma,d_r\}$ and define
    \[
       U(x)=
       \begin{cases}
          \max\{W(x),\Phi(x)+c\},&x\in B_r(x_0),\\
          W(x),&x\in\R^n\setminus B_r(x_0).
       \end{cases}
    \]
    By \eqref{eq-perron-global-gap}, $U=W$ on
    $B_r(x_0)\setminus B_{r/2}(x_0)$ and in a neighborhood of
    $\partial B_r(x_0)$; hence $U\in USC(\R^n)$.  Moreover,
    $\underline w\leq U\leq\overline w$ by
    \eqref{eq-perron-upper-gap} and the corresponding bounds for $W$.

    We claim that $U$ is a subsolution.  If the active branch at a contact
    point is $W$, then $U\geq W$ globally and equality holds at that point;
    ellipticity transfers the subsolution inequality from $W$ to $U$.  If the
    active branch is $\Phi+c$, let $\psi$ be an upper test for $U$ and let
    $\widetilde\psi$ be its patched function.  By
    \eqref{eq-perron-global-gap}, $U\geq\Phi+c$ in all of $\R^n$, with
    equality at the contact point.  Hence ellipticity gives
    \[
       I\widetilde\psi\geq I(\Phi+c),
    \]
    and \eqref{eq-perron-local-strict} yields the required subsolution
    inequality.  Thus $U\in\mathcal F$.

    Choose $x_j\to x_0$ with $W(x_j)\to W_*(x_0)$.  For all sufficiently
    large $j$, $x_j\in B_{r/2}(x_0)$ and
    \[
       \Phi(x_j)+c>W(x_j),
    \]
    because $\Phi(x_0)=W_*(x_0)$.  Hence $U(x_j)>W(x_j)$, contradicting the
    definition of $W$ as the pointwise supremum of $\mathcal F$.  Thus $W_*$
    is a viscosity supersolution.

    The barriers give the exterior and boundary values and
    $W_*\geq\underline w>0$ in $\Omega$.  Lemma~\ref{lem4} yields
    $W^*\leq W_*$; together with \eqref{eq-perron-usc} and the general
    inequality $W_*\leq W$, this gives
    $W=W^*=W_*\in C(\R^n)$.  Hence $W$ solves \eqref{e1}.
\end{proof}

\section*{Acknowledgments}
The authors wish to thank Sapienza University of Rome for funding S.~Kitano’s visit during May--July 2026, and the Department of Mathematics Guido Castelnuovo for its warm hospitality.
S.~Kitano was partially supported by JSPS KAKENHI Grant-in-Aid for
Early-Career Scientists (JP24K16956) and Grant-in-Aid for JSPS Fellows
(JP25KJ0086).

\section*{Declaration on the use of artificial intelligence}
The original manuscript, including its main results and core mathematical arguments, was prepared entirely by the authors without the use of generative artificial intelligence. During subsequent revisions, the authors used ChatGPT (OpenAI) for iterative mathematical discussion and editorial assistance, including examining, developing, and refining mathematical arguments, checking related literature and references, and improving the exposition. All AI-assisted suggestions and arguments were independently verified, revised, and finalized by the authors, who take full responsibility for the mathematical content of the manuscript.

\bibliographystyle{plain}
\bibliography{references}

\end{document}